\documentclass[10pt]{article}

\usepackage{amssymb,amsthm,bbm}
\usepackage[centertags]{amsmath}

\usepackage{mathabx}
\let\abxast=\ast
\usepackage{MnSymbol}

\let\ast=\abxast
\usepackage[mathscr]{euscript}
\usepackage{tikz-cd}
\usepackage{hyperref}

\usepackage{rotating}

% Theorem environments
\newtheorem{thm}{Theorem}[subsection]
\newtheorem{lem}[thm]{Lemma}

\newtheorem{prop}[thm]{Proposition}
\newtheorem{cor}[thm]{Corollary}
\theoremstyle{definition}
\newtheorem{defn}[thm]{Definition}
\newtheorem{exmp}[thm]{Example}
\newtheorem{rem}[thm]{Remark}

% Common Categories

	% notation for slice categories
	% notation for coslice categories

 %\alpha-small cats
 %cocomplete \beta-small cats

\renewcommand{\inf}{$\infty$-}

\newcommand{\gap}[2]{(#1, #2)}                      % The cartesian gap map
\newcommand{\cogap}[2]{\lfloor #1, #2 \rfloor}      % The cocartesian gap map

\newcommand{\pp}{\,\square\,}      % The pushout product (mat's shortcut)
\newcommand{\ppz}{\,\square_Z\,}   % The pushout product over Z
                % The pushout product
      % Source of a box product
\newcommand{\pprodovr}[3]{#2 \, \square_{#1} \, #3}        % The relative pushout product

\newcommand{\ph}[2]{\langle #1, #2 \rangle}                % The pullback hom (mat's shortcut)
\newcommand{\intph}[2]{\llangle #1, #2 \rrangle}           % The internal pullback hom (mat's shortcut)
    % Target of a pullback hom

                            % The fiberwise diagonal

% other choice for \ini and \term
\newcommand{\ini}{0}				% command for initial object	(mat)
\newcommand{\term}{1}				% command for terminal object		(mat)

% Orthogonality helpers
\newcommand{\intperp}{{\begin{sideways}$\!\Vdash$\end{sideways}}}		% Internal orthogonality (mat)
\newcommand{\iperp}{\ \intperp\ }							% Internal orthogonality with space on sides (mat)
						% Relative internal orthogonality (mat)
							% Relative internal orthogonality (mat)
						% Relative internal orthogonality with spaces on sides (mat)
								% Parametric orthogonality

% Mapping spaces and Homs
\newcommand{\map}[2]{[#1, #2]}

\newcommand{\Cart}{\mathrm{Cart}}
\newcommand{\intmap}[2]{\lsem #1, #2 \rsem}

% Font abbreviations

% Pullback and pushout marks for diagrams
\newcommand{\pbmark}{\ar[dr, phantom, "\ulcorner" very near start, shift right=1ex]}
\newcommand{\pomark}{\ar[ul, phantom, "\lrcorner" very near start, shift right=1ex]}
% variation with two columns jump

% Miscellaneous

\newcommand{\op}{^\mathrm{op}}
\newcommand{\join}{\star}

% Userful Operators

\DeclareMathOperator*{\colim}{colim}

\DeclareMathOperator{\id}{id}

% Calligraphic letters

\newcommand{\cA}{\mathscr{A}}

\newcommand{\cC}{\mathscr{C}}
\newcommand{\cD}{\mathscr{D}}
\newcommand{\cE}{\mathscr{E}}

\newcommand{\cL}{\mathscr{L}}
\newcommand{\cM}{\mathscr{M}}
\newcommand{\cN}{\mathscr{N}}

\newcommand{\cP}{\mathscr{P}}

\newcommand{\cR}{\mathscr{R}}
\newcommand{\cS}{\mathscr{S}}

%
% The following is a fix to tikzcd to allow for column spacing "between origins" which
% makes cubes well spaced even when the node entries have different sizes.
%
% http://tex.stackexchange.com/questions/89056/how-to-ensure-consecutive-diagonal-arrows-form-a-straight-line
%

\makeatletter
\pgfqkeys{/tikz/commutative diagrams}{
  row sep/.code={\tikzcd@sep{row}{#1}{}},
  column sep/.code={\tikzcd@sep{column}{#1}{}},
  bo row sep/.code={\tikzcd@sep{row}{#1}{between origins}},
  bo column sep/.code={\tikzcd@sep{column}{#1}{between origins}},
  bo column sep/.default=normal,
  bo row sep/.default=normal,
}
\def\tikzcd@sep#1#2#3{% re-defintion of original package macro!
  \pgfkeysifdefined{/tikz/commutative diagrams/#1 sep/#2}%
    {\pgfkeysalso{/tikz/#1 sep={\ifx\\#3\\1*\else1.7*\fi\pgfkeysvalueof{/tikz/commutative diagrams/#1 sep/#2},#3}}}%
    {\pgfkeysalso{/tikz/#1 sep={#2,#3}}}}

\title{A Generalized Blakers-Massey Theorem}
\author{Mathieu Anel\footnote{Department of Philosophy, Carnegie Mellon University, mathieu.anel@gmail.com} , \ \ 
Georg Biedermann\footnote{Universidad del Norte, Barranquilla, Colombia, gbm@posteo.de} , \ \ 
Eric Finster\footnote{University of Birmingham, Department of Computer Science, ericfinster@gmail.com} , \\
and Andr\'{e} Joyal\footnote{CIRGET, UQ\`AM. joyal.andre@uqam.ca} 
}

\begin{document}

\maketitle

\begin{abstract}
  We prove a generalization of the classical connectivity theorem of
  Blakers-Massey, valid in an arbitrary higher topos and with respect
  to an arbitrary \emph{modality}, that is, a factorization system
  $(\cL, \cR)$ in which the left class is stable by base change. We
  explain how to rederive the classical result, as well as the recent
  generalization of \cite{ChachSchererWerndli}.  Our proof is inspired
  by the one given in Homotopy Type Theory in \cite{FinsterLumsdaine}.
\end{abstract}

\setcounter{tocdepth}{2}
\tableofcontents

\section{Introduction}
\label{sec:introduction}

The classical Blakers-Massey theorem, sometimes known as the homotopy
excision theorem, is one of the most fundamental facts in homotopy
theory.  Given a homotopy pushout diagram of spaces
\[
  \begin{tikzcd}
    A \ar[r, "g"] \ar[d, "f"'] & C \ar[d] \\
    B \ar[r] & D \pomark
  \end{tikzcd}
\]
such that the map $f$ is $m$-connected and the map $g$ is
$n$-connected, the theorem tells us that the canonical map
$A\to B\times_{D} C$ to the homotopy pullback is in fact
$(m+n)$-connected. (We direct readers, who find themselves surprised
by the statement, to Remark~\ref{rem:conn-conv} for an 
explanation of our indexing conventions for connected maps.) 
Among other things, the theorem gives rise to the
Freudenthal suspension theorem and, thus, to stable homotopy theory.

Recently, a new proof of this theorem was found in the context of
\emph{homotopy type theory}, a formal system originating in
constructive mathematics and computer science which has been shown to
provide an elementary axiomatization of homotopy theoretic reasoning
\cite{hottbook}.  One pleasant feature of this proof is that it is
entirely homotopy invariant, neither relying on a particular model
of homotopy types such as topological spaces or simplicial sets, nor
requiring more sophisticated mathematical machinery such as
transversality arguments or homology calculations. A second and
perhaps more surprising consequence is that, written as it is in a
formal language, it becomes subject to automatic verification by a
computer. The interested reader may consult \cite{FinsterLumsdaine},
where just such formalization is described in detail.

The reasoning formalized by homotopy type theory is generally thought
to serve as an ``internal language'' for a particular class of higher
categories, namely the $\infty$-topoi as developed by Rezk~\cite{RezkTopos} and Lurie~\cite{LurieHT}.  
This is to say that each operation of the
logic has a corresponding interpretation as a higher categorical
construction.  As a consequence, the original proof of
\cite{FinsterLumsdaine} may be translated into the language of higher
category theory, an undertaking which is carried out in unpublished
work of Rezk \cite{Rezk:blakers-massey}, and which we revisit in this
article. Our result is a much generalized theorem, applying not only
to spaces, but to an arbitrary $\infty$-topos.  As we
will see in the companion article \cite{GBM2}, the generalized theorem
can be applied to an appropriate presheaf topos and yields an analogue
of the Blakers-Massey theorem in the context of Goodwillie's calculus of functors. 

In order to pursue these sorts of applications, however, we will need
to further generalize the theorem, beyond simply placing it in an
abstract context.  While on the face of it, the theorem speaks about
the connectivity of certain maps, it turns out that we may in fact
replace the property of ``connectedness'' with any other property of
morphisms which behaves sufficiently like it.  The central observation
here is that the $n$-connected maps form the left class of a
\emph{factorization system} $(\cL, \cR)$ on the category of spaces
with the additional property that the left class $\cL$ is stable under
base change.  We refer to a factorization system satisfying this
condition as a \emph{modality}, a term originating in the
literature on type theory \cite[Section 7.7]{hottbook}.

Concretely, then, our main theorem is the following:
\vspace*{2mm}

\noindent
{\bf Theorem~\ref{thm:gen-bm}}
Let $\cE$ be an \inf topos and $(\cL,\cR)$ a modality on $\cE$.
Write $\Delta h : A \to A \times_B A$ for the diagonal of a
map $h : A \to B \in \cE$ and $- \ppz -$ for the
pushout product in the slice category $\cE_{/Z}$.
Given a pushout square
\[
  \begin{tikzcd}
    Z \ar[d, "f"'] \ar[r, "g"] & Y \ar[d] \\
    X \ar[r] & W \pomark
  \end{tikzcd}
\]
in $\cE$, suppose that $\Delta f \ppz \Delta g \in\cL$. Then the
canonical map $\gap{f}{g}: Z\to X\times_W Y$ is also in $\cL$.
\vspace*{2mm}

In fact, a similar generalization of the Blakers-Massey theorem was
recently obtained by Chacholski, Scherer, and Werndli in
\cite{ChachSchererWerndli}, and their work provided inspiration for
the statement of our main result.  Our techniques, however, are
quite different: their method involves the manipulation of
\emph{weak cellular inequalities} of spaces, as introduced in
\cite{farjoun1995cellular}, whereas we focus on $\infty$-topos theoretic tools
such as descent. 
(It is possible though to interpret our results as proving weak cellular inequalities for morphisms.)
The present work can be seen as a synthesis and generalization of two new approaches to a classical result: via weak cellular inequalities and via higher topos theory/homotopy type theory. Overall, the necessary input from classical homotopy theory has become almost invisible.

We also would like to draw the attention of the reader to the results in Subsection~\ref{subsec:decentLcart} where descent properties associated to the left class of a modality are proved. These results are important for our proof of the generalized Blakers-Massey theorem but might be of independent interest.

Let us now turn to an outline of the paper.  Section 2 fixes our
higher categorical conventions and recalls some elementary facts which
will be used throughout the paper.  We briefly review the definition
of an $\infty$-topos, including the axiom of descent.  Section 3 begins by
introducing the notion of a factorization system, as well as the
pushout product and pullback hom, two constructions which prove
convenient for manipulating orthogonality relations between maps in a
category.  In a cartesian closed category such as an $\infty$-topos,
orthogonality can be strengthened to an internal version, and we
explore some of the properties of factorization systems compatible
with this internalization, of which modalities will prove to be
examples.  We then give a short treatment of the
$n$-connected/$n$-truncated factorization system in an $\infty$-topos.  This
archetypal example of a modality will be important for extracting the
classical Blakers-Massey theorem from our generalized version.  Next, we introduce the notion of a modality itself, providing a number of examples and
deriving some elementary properties, including the Dual Blakers-Massey
theorem.  We conclude the section by deriving what turns out to be the
most crucial property of modalities for our purposes: the descent theorem
for $\cL$-cartesian squares.  Section 4 then turns to the proof of the
generalized Blakers-Massey theorem itself, finishing with the derivation of the
classical theorem, as well as that of Chacholski-Scherer-Werndli.
\vspace*{2mm}

\noindent {\bf Acknowledgments:} 
The authors would like to thank J\'er\^ome Scherer for helpful discussions about his joint paper \cite{ChachSchererWerndli}, Karol Szumi\l o, Sarah Yeakel and the referee for helpful comments which have prompted us to clarify and simplify our arguments.

The first author has received funding from the European Research Council under the European Community's Seventh Framework Programme (FP7/2007-2013 Grant Agreement n$^{\circ}$263523) and the support of the Air Force Office of Scientific Research through MURI grant FA9550-15-1-0053.
The second author and this project have received funding from the European Union’s Horizon 2020 research and innovation programme under Marie Sk\l odowska-Curie grant agreement No 661067.  
The second author also acknowledges support from the project ANR-16-CE40-0003 ChroK.
The third author has been supported by the CoqHoTT ERC Grant 64399.
The fourth author has been supported by the NSERCC grant 371436.

\section{Higher Topoi}
\label{sec:higher-topoi}

\subsection{Higher Categorical Conventions}
\label{sec:higher-cats}

Throughout this paper, we employ the language of higher category
theory, considering only homotopy invariant constructions.  Moreover,
we use terminology which reflects this convention: by \emph{category}
we will always mean an $(\infty,1)$-category, saying $1$-category
explicitly to refer to an ordinary category if the occasion so arises.
In particular, we will from now on refer to an $\infty$-topos simply as a topos.
Similarly, we say simply \emph{limit} and \emph{colimit} for the
higher categorical version, what would ordinarily be called the
homotopy limit or homotopy colimit.  All mapping spaces are ``derived'',
and composition of morphisms is associative up to coherent higher
homotopy. 

For readers unfamiliar with the literature on higher category theory,
we have tried hard to make the paper nonetheless accessible.  Indeed,
our arguments involve only the elementary manipulation of
homotopy limits and colimits, they are, in a sense, model independent.
A reader more familiar with the theory of model categories should have
no trouble interpreting our results in, for example, a simplicial
model category.  Of course, for a more precise discussion of the
relationship between the higher categorical approach and the model
category theoretic one, we refer the reader to \cite{LurieHT}.

We will use the word {\em space} to refer to an abstract homotopy
type, what is often called an $\infty$-groupoid in the higher
categorical literature.  The reader is free to keep in mind any
preferred model for these objects, such as topological spaces (compactly generated Hausdorff) or
simplicial sets up to weak homotopy equivalence, but none of our arguments will
depend on such a choice.  We write $\cS$ for the category of spaces.

For two objects $X$ and $Y$ in a category $\cC$, we write $\map{X}{Y}$
for the space of maps between $X$ and $Y$.  The words map, morphism
and arrow will be used interchangeably, as is common.  For a category
$\cC$, we let $\cC^\to$ denote its category of arrows.  By  an {\em
  isomorphism} in $\cC$ will refer to a morphism which is invertible
in $\cC$ in the higher categorical sense: for example, in $\cS$
the isomorphisms correspond to the weak homotopy equivalences when
homotopy types are modeled as topological spaces.
We will often write``$X=Y$'' to mean
that two objects $X$ and $Y$ of $\cC$ are isomorphic,
when the isomorphism is clear. 
Similarly, we will write $f = g$ to mean 
that two maps $f,g:X\to Y$ are homotopic, when
the homotopy is clear. 
As eg. in the statement of Proposition~\ref{appendixpp-adj2},
we may also write
$f = g$ to mean that two maps are naturally isomorphic in the arrow category 
$\cC^\to$ when the isomorphisms are clear. As the former "$=$" is a special case of the latter we hope this does not cause confusion.

We encourage the interested reader to consult \cite{hottbook} for the homotopy type theory
perspective on the equality relation.

We will write $\ini$ for initial and $\term$ for terminal objects.

Given a finite family of maps $f_i:X\to Y_i$ where $1 \leq i \leq n$
in a category $\cC$, we will write
\[
(f_1,\dots, f_n) : X \to Y_1\times \dots \times Y_n.
\]
for the canonical map from $X$ to the product of the $Y_i$.
Dually, for a  finite family $f^i:X_i\to Y$ where 
$1 \leq i \leq m$, we write
\[
  \lfloor f^1,\dots, f^m \rfloor : X_1 \sqcup \dots \sqcup X_m \to Y.
\]
for the canonical map from the coproduct of the $X_i$ to $Y$.  More
generally, for any doubly indexed family $f_j^i:X_i \to Y_j$ where
$1 \leq i \leq m$ and $1 \leq j \leq n$, we have an induced ``total" map
$$
T(f_j^i) : X_1 \sqcup \dots \sqcup X_m \to Y_1\times \dots \times Y_n.
$$
and we leave it to the reader to check that this map obeys the commutation relation
\begin{align*}
T(f_j^i) & =\left\lfloor (f_1^1,\dots, f_n^1),\dots , (f_1^m,\dots, f_n^m)\right\rfloor    \\
& = \begin{bmatrix}
    f^1_1       & f^1_2 &  \dots & f^1_n \\
    f^2_1       & f^2_2 &  \dots & f^2_n \\
    \hdotsfor{4} \\
    f^m_1       & f^m_2 & \dots & f^m_n
\end{bmatrix}\\
&= \left( \lfloor f^1_1,\dots, f^m_1 \rfloor, \dots , \lfloor f^1_n,\dots, f^m_n \rfloor\right).
\end{align*}

The following special cases of the above 
notation will occur frequently
enough that they will merit some special terminology.  Suppose we are 
given a commutative square
\[
\begin{tikzcd}
  Z \ar[d, "f"'] \ar[r, "g"] & Y \ar[d, "k"] \\
  X \ar[r, "h"'] & W 
\end{tikzcd}
\]
in a category $\cC$. Taking the pushout of the diagram
$X \leftarrow Z \to Y$ or the pullback of the diagram
$X \to W \leftarrow Y$, we obtain two canonical maps
which, using the previous notation, will be denoted by
\begin{align*}
  \gap{f}{g} : Z \to X \times_W Y \\
  \cogap{h}{k} : X \sqcup_Z Y \to W
\end{align*}
We will refer to the first of these two maps as the
\emph{cartesian gap map}, or merely the \emph{gap map}.  The
second will be referred to as the \emph{cocartesian gap map}, or
more briefly, the \emph{cogap map}.  This notation is in fact
mildly abusive since the maps in question depend on the data
of the entire commutative square.  That is to say, the first
map is a special case of our general notation when regarded
in the slice category $\cC_{/W}$ and the second
in the coslice category $\cC_{Z/}$.  In practice, however, the
remaining maps will be clear from the context.

\subsection{Topoi and Descent}
\label{sec:descent-topoi}

There are many equivalent characterizations of the notion of a
\emph{topos}, but for the purposes of this article we will adopt the
position that a topos is simply a category satisfying a certain
collection of \emph{exactness conditions}, that is, compatibilities
between limits and colimits.  While this is perhaps not the most
profound point of view on the subject, it nonetheless has the
benefit of practicality, making explicit the constructions which can be
performed, and hence will be adequate for our purposes here.

We will need a couple of elementary facts about
\emph{presentable} categories, for whose complete theory we refer the
reader to \cite[Ch. 5]{LurieHT}.  A presentable
category has all limits and colimits, and a functor $F : \cC \to \cD$
between presentable categories preserves all colimits
if and only if it has a right adjoint.  We say that the colimits in
a presentable category $\cC$ are
{\it universal} if the base change functor 
  $$f^* : \cC_{/Y} \to \cC_{/X}$$ 
  preserves colimits for any map $f:X\to Y$ in $ \cC$.  In this case,
  the functor $f^*$ admits a right adjoint $f_*$ by the previous remarks.
In particular, the base change functor $A\times (-): \cC\to  \cC_{/A}$
has a right adjoint 
  $$\Pi_A:\cC_{/A}\to \cC$$ 
for every object $A\in \cC$. It follows that the category $ \cC$ is cartesian closed with internal hom 
  $$\intmap{A}{B}=\Pi_A(A\times B,p_A)$$
for every $A,B\in  \cC$, where $p_A$ is the projection onto $A$.

We will say that a morphism $\alpha:f\to g$
in the arrow category $\cC^\to$ is {\it cartesian} if the corresponding
square in $\cC$ is cartesian. The composite of two cartesian morphisms
is cartesian, since the composite of two cartesian squares is cartesian.
We will denote by $\Cart(\cC^\to)$ the (non-full)
subcategory of cartesian morphisms of $\cC^\to$.

\begin{defn} \label{generaldescent}
We say that a cocomplete category $\cC$ satisfies the {\em descent principle} if the 
subcategory $\Cart(\cC^\to)$ is closed under colimits.
\end{defn}

The closure condition in the definition means two things:
colimits exist in the subcategory $\Cart(\cC^\to)$
and they are preserved by the inclusion functor $\Cart(\cC^\to)\to \cC^\to$.
More precisely, a diagram $D:I\to \cC^\to$ is the same thing as a natural transformation
$\alpha:D_0\to D_1$ between two diagrams $D_0,D_1:I\to \cC$.
The diagram $D$ belongs to the subcategory $\Cart(\cC^\to)$ 
if and only if the natural transformation $\alpha:D_0\to D_1$ is \emph{cartesian}:
that is, if the naturality square

 \[
    \begin{tikzcd}
      D_0(i) \ar[r, "D_0(f)"] \ar[d, "\alpha(i)"'] \pbmark & D_0(j) \ar[d, "\alpha(j)"] \\
      D_1(i) \ar[r, "D_1(f)"'] & D_1(j) 
    \end{tikzcd}
  \]
is cartesian for every arrow $f:i\to j$ in the category $I$.
The colimit of $D:I\to \cC^\to$ is the map $\colim(\alpha):\colim D_0 \to \colim D_1$.
The descent principle implies that the square 
\[
    \begin{tikzcd}
      D_0(i) \ar[r, "\iota_0(i)"] \ar[d, "\alpha(i)"'] \pbmark & \colim D_0 \ar[d, "\colim(\alpha)"] \\
      D_1(i) \ar[r, "\iota_1(i)"'] & \colim D_1 
    \end{tikzcd}
  \]
is cartesian for every $i\in I$, where $\iota_0(i)$ and $\iota_1(i)$ are the canonical maps.
The principle also implies that a square
\[
    \begin{tikzcd}
     \colim D_0  \ar[d, "\colim(\alpha)"'] \ar[r, "u_0"]   & A \ar[d, "f"] \\
     \colim D_1  \ar[r, "u_1"'] & B 
    \end{tikzcd}
  \]
is cartesian if and only if the square 
\[
    \begin{tikzcd}
      D_0(i) \ar[rr, "u_0\iota_0(i)"] \ar[d, "\alpha(i)"']  && A \ar[d, "f"] \\
      D_1(i) \ar[rr, "u_1\iota_1(i)"'] && B 
    \end{tikzcd}
  \]
is cartesian for every $i\in I$.

The applications of the descent principle in the present work
are all consequences of the following 

\begin{lem} \label{lemmadescent} 
If a category $\cC$ satifies the descent principle then for every pushout
  \[
    \begin{tikzcd}
      f \ar[r, "\alpha"] \ar[d, "\beta"'] & g \ar[d, "\gamma"] \\
      h \ar[r, "\delta"'] & k \pomark
    \end{tikzcd}
  \]
in $\cC^\to$, such that $\alpha$ and $\beta$ are cartesian, $\gamma$ and $\delta$ are also cartesian.
\end{lem}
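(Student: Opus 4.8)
The plan is to exploit the defining feature of the descent principle—namely that $\Cart(\cC^\to)$ is closed under colimits, in particular under pushouts—to identify the given pushout in $\cC^\to$ with a pushout taken inside $\Cart(\cC^\to)$, and then to read off the conclusion. First I would observe that a pushout in $\cC^\to$ is nothing but a pushout of diagrams over the index category $\bullet \leftarrow \bullet \to \bullet$ (the ``span'' shape $I$), computed objectwise: the square
\[
  \begin{tikzcd}
    f \ar[r, "\alpha"] \ar[d, "\beta"'] & g \ar[d, "\gamma"] \\
    h \ar[r, "\delta"'] & k \pomark
  \end{tikzcd}
\]
in $\cC^\to$ decomposes into a pushout in $\cC$ of the sources $\mathrm{src}(g) \leftarrow \mathrm{src}(f) \to \mathrm{src}(h)$ with colimit $\mathrm{src}(k)$, and a pushout of the targets, with $k$ itself the induced map between these two colimits. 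In the language set up just before the lemma, the span $h \leftarrow f \to g$ in $\cC^\to$ is a diagram $D : I \to \cC^\to$, i.e.\ a natural transformation $\alpha\colon D_0 \to D_1$ between diagrams $D_0, D_1 : I \to \cC$ (here $D_0$ records the sources, $D_1$ the targets), and $k = \colim(\alpha)\colon \colim D_0 \to \colim D_1$.

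Next I would use the hypothesis that $\alpha$ and $\beta$ are cartesian morphisms of $\cC^\to$. Concretely this says that, for the index category $I = (\bullet \leftarrow \bullet \to \bullet)$, the natural transformation $\alpha\colon D_0 \to D_1$ is cartesian in the sense made precise above: the naturality square over each of the two non-identity arrows of $I$ is a pullback in $\cC$. Hence $D$ lands in the subcategory $\Cart(\cC^\to)$. By the descent principle, the pushout of $D$ computed in $\cC^\to$ agrees with the pushout computed in $\Cart(\cC^\to)$; in particular the two legs of the colimit cocone, $\gamma\colon g \to k$ and $\delta\colon h \to k$, are morphisms of $\Cart(\cC^\to)$, which is to say they are cartesian morphisms of $\cC^\to$. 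This is exactly the assertion that $\gamma$ and $\delta$ are cartesian, and is precisely the content of the two consequences of the descent principle spelled out in the text: the square
\[
  \begin{tikzcd}
    D_0(i) \ar[r] \ar[d, "\alpha(i)"'] \pbmark & \colim D_0 \ar[d, "\colim(\alpha)"] \\
    D_1(i) \ar[r] & \colim D_1
  \end{tikzcd}
\]
is cartesian for each object $i \in I$, and taking $i$ to be the object of $I$ indexing $g$ (resp.\ $h$) yields that $\gamma$ (resp.\ $\delta$) is cartesian.

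The only real subtlety—and the step I expect to require the most care—is the bookkeeping identifying a pushout square in the arrow category $\cC^\to$ with a span-indexed colimit of the kind discussed in the paragraphs preceding the lemma, i.e.\ confirming that the objectwise-computed pushout in $\cC^\to$ is the colimit of $D : I \to \cC^\to$ over the span shape $I$, so that the two ``consequences'' of the descent principle apply verbatim. Once that identification is in place the conclusion is immediate; there is no further calculation to perform, since the descent principle has been formulated in exactly the form needed. (One could alternatively phrase the argument without reference to $\Cart(\cC^\to)$ at all, applying directly the second consequence of the descent principle to check that each of $\gamma,\delta$ fits into a cartesian square against any test map, but the route through $\Cart(\cC^\to)$ is cleaner.)
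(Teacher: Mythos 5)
Your proposal is correct and follows exactly the route the paper intends: the paper gives no separate proof of this lemma, treating it as an immediate consequence of the discussion preceding it, namely that a span in $\Cart(\cC^\to)$ is a cartesian natural transformation of $I$-diagrams for $I = (\bullet \leftarrow \bullet \to \bullet)$ and that the descent principle makes the colimit cocone squares cartesian. Your identification of the objectwise pushout in $\cC^\to$ with the span-indexed colimit, and the application of the cocone-leg consequence at the two objects indexing $g$ and $h$, is precisely that argument.
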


It may be worth spelling out what the lemma says in the category $\cC$
itself.  A square of arrows in $\cC^\to$ corresponds 
to a cubical diagram in $\cC$ as follows:
\[
  \begin{tikzcd}[bo column sep=large]
    A \ar[rr] \ar[dd, "f"'] \ar[dr] & & B \ar[dd, "g"', near start] \ar[dr] & \\
    & C \ar[rr, crossing over] & & D \ar[dd, "k"'] \\
    E \ar[rr] \ar[dr] & & F \ar[dr]  & \\
    & G \ar[rr, crossing over] \ar[from=uu, crossing over, "h"', near start] & & H
  \end{tikzcd}
\]
The hypothesis of Lemma \ref{lemmadescent} then requires that the
top and bottom horizontal squares are pushouts and that the back and
left squares ($\alpha$ and $\beta$ in the definition above) are
pullbacks.  The conclusion then asserts that the
front and right squares ($\gamma$ and $\delta$) are pullbacks as well.
For the category of spaces this fact is well known and often referred to as Mather's cube lemma~\cite{mather1976pull}.

\begin{defn}
  We say that a category $\cE$ is a \emph{topos} if
  \begin{enumerate}
  \item $\cE$ is presentable,
  \item colimits in $\cE$ are universal, and
  \item $\cE$ satisfies the descent principle.
  \end{enumerate}
\end{defn}

\begin{exmp}
  The category of spaces $\cS$ is a topos, as is the category of
  presheaves $[\cC^{\op}, \cS]$  for any small category
  $\cC$. More generally, any left-exact localization of a presheaf
  category is a topos, and this in fact completely characterizes the
  class of topoi.  See \cite[prop. 6.1.3.10]{LurieHT}.
\end{exmp}

Let us give a simple application of descent. 

\begin{defn}\label{def:mono}
We will say that a map $f : A \to B$ in a topos $\cE$ is a \emph{monomorphism}
if the square
\[
  \begin{tikzcd}
    A \ar[d, equal] \ar[r, equal] & A \ar[d, "f"] \\
    A \ar[r, "f"'] & B
  \end{tikzcd}
\]
is cartesian.  
\end{defn}

This concept will be more thoroughly treated in Section~\ref{sec:conn-trun}.  For example, in the category of spaces $\cS$, a map $f : A \to B$ is a monomorphism if and only if it is (weakly equivalent to) an inclusion of a union of path components of $B$ into $B$.

\begin{prop}
  \label{prop:mono-pushout}
  Consider a pushout square 
  \[
    \begin{tikzcd}
      A \ar[d, "f"', rightarrowtail] \ar[r, "h"]  & C \ar[d, "g"] \\
      B \ar[r, "k"'] & D \pomark
    \end{tikzcd}
  \]
in a topos in which the arrow $f$ is a monomorphism.  Then $g$ is a monomorphism and the square is cartesian.
\end{prop}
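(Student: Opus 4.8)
The plan is to exhibit the given pushout as a pushout in the arrow category $\cE^\to$ and then appeal to the descent consequence recorded in Lemma~\ref{lemmadescent}. First I would unwind Definition~\ref{def:mono}: saying that $f : A \to B$ is a monomorphism is exactly saying that the morphism $\beta : \id_A \to f$ in $\cE^\to$, given by $\id_A$ on sources and by $f$ on targets, is a \emph{cartesian} morphism. Dually, proving that $g$ is a monomorphism is the same as proving that the morphism $\gamma : \id_C \to g$ is cartesian, and proving that the original square is cartesian is the same as proving that the morphism $\delta : f \to g$ in $\cE^\to$ (namely the original square, read as a single morphism of arrows) is cartesian.

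Next I would form the commutative square in $\cE^\to$
\[
  \begin{tikzcd}
    \id_A \ar[r, "\alpha"] \ar[d, "\beta"'] & \id_C \ar[d, "\gamma"] \\
    f \ar[r, "\delta"'] & g \pomark
  \end{tikzcd}
\]
where $\alpha$ is given by $h$ on both source and target, $\beta$ is the cartesian morphism witnessing that $f$ is a monomorphism, and $\gamma$, $\delta$ are the evident morphisms built from $g$ and from the original square. The crucial observation is that this square is a pushout in $\cE^\to$: colimits in $\cE^\to$ are computed objectwise, and on sources the span $A \xleftarrow{\id_A} A \xrightarrow{h} C$ has pushout $C$, while on targets the span $B \xleftarrow{f} A \xrightarrow{h} C$ has pushout $D$ — precisely the source and target of $g$, with the induced comparison map being $g$ itself.

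Finally I would verify the two hypotheses of Lemma~\ref{lemmadescent} for this pushout: $\beta$ is cartesian by assumption, and $\alpha$ is cartesian because a commutative square whose two vertical legs are identities is automatically a pullback. The lemma then yields that $\gamma$ and $\delta$ are cartesian, which by the first paragraph says exactly that $g$ is a monomorphism and that the original square is cartesian. I do not anticipate a genuine obstacle here; the only point demanding care is the bookkeeping of sources versus targets when the span $B \leftarrow A \to C$ is transported into the arrow category, together with the verification that its objectwise pushout really does reproduce $g$ with the correct comparison square.
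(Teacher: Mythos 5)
Your proposal is correct and is essentially the paper's own argument: the paper draws the same configuration as a cube in $\cE$ (top face $A \rightrightarrows C$ trivially cocartesian, back face trivially cartesian, left face cartesian because $f$ is a monomorphism) and invokes descent, which is exactly your pushout square $\id_A \to \id_C$, $\id_A \to f$ in $\cE^\to$ fed into Lemma~\ref{lemmadescent}. The only difference is notational — arrow-category language versus the explicit cube — and all your verifications (objectwise computation of the pushout, cartesianness of $\alpha$ and $\beta$) are sound.
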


\begin{proof}
  Consider the cube:
  \[
    \begin{tikzcd}
      A \ar[rr, "h"] \ar[dd, equal] \ar[dr, equal] & & C \ar[dd, equal] \ar[dr, equal] & \\
      & A \ar[rr, "h", near start, crossing over] & & C \ar[dd, "g"] \\
      A \ar[rr, "h"', near end] \ar[dr, "f"', rightarrowtail] & & C \ar[dr, "g"'] & \\
      & B \ar[rr, "k"'] \ar[from=uu, crossing over, "f"', near start, rightarrowtail] & & D
    \end{tikzcd}
  \]
  The top square is trivially cocartesian and the back square is
  trivially cartesian.  Note also that the left side is cartesian,
  since $f$ is a monomorphism.  Hence the front and right squares are
  cartesian by descent.  But the front face is just our original
  square, and the fact that the right square is cartesian says that
  $g$ is a monomorphism.
\end{proof}

Finally, we recall also what is sometimes called the fundamental
theorem of topos theory \cite[Prop. 6.3.5.1]{LurieHT}.

\begin{prop} 
For any object $X$ in a topos $\cE$, the slice category $\cE_{/X}$ is a topos. 
\end{prop}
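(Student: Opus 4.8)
The plan is to check the three defining conditions of a topos for $\cE_{/X}$ one by one, transporting each from the corresponding property of $\cE$ along the forgetful functor $U : \cE_{/X} \to \cE$, $(A \to X) \mapsto A$. The facts about $U$ that I would establish first are elementary but do all of the work: $U$ has a right adjoint, namely $Y \mapsto (X \times Y \to X)$, so it preserves colimits, and moreover colimits in $\cE_{/X}$ are simply computed by taking the colimit of the underlying diagram in $\cE$ and equipping it with its induced structure map; dually, $U$ preserves and reflects pullbacks (indeed all connected limits), since over a connected diagram the structure map to $X$ is determined by the diagram itself. In particular $\cE_{/X}$ is cocomplete, and a commutative square in $\cE_{/X}$ is cartesian if and only if its image under $U$ is cartesian in $\cE$.

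Condition (1), presentability: $\cE_{/X}$ is cocomplete as just observed, and it is accessible because the slice of an accessible category over an object is again accessible; hence $\cE_{/X}$ is presentable \cite[\S 5.5]{LurieHT}. Condition (2), universality of colimits: here the point is that iterating slices introduces nothing new. For an object $p : A \to X$ there is a canonical equivalence $(\cE_{/X})_{/p} \simeq \cE_{/A}$, under which colimits agree (both being computed in $\cE$) and, for a morphism $\phi : (A' \to X) \to (A \to X)$ with underlying map $\phi_0 : A' \to A$, the base-change functor $\phi^* : (\cE_{/X})_{/p} \to (\cE_{/X})_{/p'}$ is identified with the base-change functor $\phi_0^* : \cE_{/A} \to \cE_{/A'}$ of $\cE$. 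Since colimits in $\cE$ are universal, $\phi_0^*$ preserves colimits, hence so does $\phi^*$; and that is precisely universality of colimits in $\cE_{/X}$.

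Condition (3), that $\cE_{/X}$ satisfies the descent principle of Definition~\ref{generaldescent}: I would pass to the functor $U^\to : (\cE_{/X})^\to \to \cE^\to$ induced by $U$ on arrow categories. Colimits in arrow categories are computed pointwise and $U$ preserves colimits, so $U^\to$ preserves colimits; and since $U$ preserves and reflects pullbacks, $U^\to$ sends cartesian morphisms to cartesian morphisms and reflects them, so that a diagram in $(\cE_{/X})^\to$ lands in $\Cart$ exactly when its image under $U^\to$ does. Combining these, the two concrete consequences of the descent principle spelled out after Definition~\ref{generaldescent} — that the comparison square at each $i \in I$ is cartesian, and that a square built over the colimit is cartesian if and only if all of its pieces over the $D_0(i)$, $D_1(i)$ are — transfer verbatim from $\cE$ to $\cE_{/X}$: one uses colimit-preservation of $U^\to$ to identify the relevant colimits and pullback-reflection of $U$ to deduce cartesianness downstairs in $\cE_{/X}$ from cartesianness upstairs in $\cE$. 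Thus $\Cart((\cE_{/X})^\to)$ is closed under colimits.

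The argument is a routine transfer, and I expect the only subtle point to be keeping the iterated slice and arrow constructions straight in condition (3) — in particular making sure that the family of comparison squares witnessing descent in $\cE$ really does pull back to the analogous family in $\cE_{/X}$, so that ``closure of the cartesian morphisms under colimits'' is genuinely inherited rather than merely plausible.
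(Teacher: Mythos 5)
Your argument is correct; note, however, that the paper itself does not prove this proposition at all but simply cites \cite[Prop.~6.3.5.1]{LurieHT}, so there is no internal proof to compare against --- what you have written is a self-contained verification of the paper's own three-axiom definition of a topos, which is arguably more in the spirit of the article than the reference is. The levers you use are the right ones: the forgetful functor $U:\cE_{/X}\to\cE$ is a left adjoint that creates colimits and creates pullbacks (being connected limits), the iterated-slice equivalence $(\cE_{/X})_{/p}\simeq\cE_{/A}$ identifies the relevant base-change functors, and presentability of slices is standard. The only place where care is genuinely needed is condition (3), and you have correctly isolated it: the closure condition of Definition~\ref{generaldescent} has two halves --- that the colimit of a diagram of cartesian morphisms exists in $\Cart((\cE_{/X})^\to)$ with cartesian cocone legs, and that a cocone out of the colimit is cartesian if and only if each of its restrictions is --- and both transfer because $U^\to$ preserves colimits while $U$ both preserves \emph{and reflects} cartesian squares; dropping the reflection would only give you the easy direction. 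I see no gap.
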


\section{Modalities}
\label{sec:modal-orth}

In this section, we introduce the prerequisite material on
factorization systems and modalities which will allow us to state our
generalized form of the Blakers-Massey theorem.  Homotopy-unique
factorization systems of the sort we consider here appear in a number
of places in the literature. For example, from a model category
theoretic perspective in \cite{bousfield1977constructions}, from a
higher categorical perspective in \cite{LurieHT} and \cite{JoyalNQC},
and from a type theoretic one in \cite[Chapter 7]{hottbook}.  We
recall some basic tools and ideas here in order to fix notation and
conventions.

\subsection{Factorization systems}
\label{sec:factorization}

\begin{defn}\label{pullbackorth}
  Let $f:A\to B$ and $g:X\to Y$ be two maps in a category
  $\cC$.   We say that $f$ and $g$ are \emph{orthogonal} 
  if the following square is cartesian in $\cS$:
  \begin{equation*} 
    \begin{tikzcd}
      \map{B}{X} \ar[d, "- \circ f"'] \ar[r, "g \circ -"] & \map{B}{Y} \ar[d, "- \circ f"] \\
      \map{A}{X} \ar[r, "g \circ -"']  & \map{A}{Y}
    \end{tikzcd}
  \end{equation*}
We denote this relation by $f \perp g$ and say that $f$ is {\it left orthogonal} to $g$ and that $g$ is {\it right orthogonal} to $f$.
\end{defn}

If $f \perp g$ then every commutative square
\begin{equation*}
\begin{tikzcd}
    A \ar[d,"f"'] \ar[r] & X \ar[d, "g"] \\
    B \ar[r] \ar[ru, dashed, "d"] & Y 
\end{tikzcd}
\end{equation*}
has a {\em unique} diagonal filler $d:B\to X$; indeed, the cartesian gap map 
\[ \map{B}{X}\to \map{A}{X} \times_{ \map{A}{Y}} \map{B}{Y} \] 
of the square in Definition~\ref{pullbackorth} is an isomorphism.
Of course, "uniqueness" means that the space of diagonal
fillers of the square is contractible.

If $\cM$ and $\cN$ are classes of maps in a category $\cC$, we will write $\cM\perp \cN$ if we have $u\perp f$ for every $u\in \cM$ and $f\in \cN$. Let us put
\begin{align*}
  \cM^\perp&:=\{f\in \cC \ |\ u\perp f \quad {\rm for \ every}\ u\in \cM\} \textrm{, and} \\
  {}^\perp\cN&:=\{u\in \cC \ |\ u\perp f \quad {\rm for \  every}\ f\in \cN \}.
\end{align*}
Then the relations $\cM\perp \cN$, $\cM \subset {}^\perp\cN$ and $\cN \subset \cM^\perp$
are equivalent.

\begin{defn}
  A {\em factorization system} on a category $\cC$ is the
  data of a pair $(\cL,\cR)$ of classes of maps in $\cC$ such that
  \begin{enumerate}
  \item every map $f$ in $\cC$ admits a factorization
    $f = \cR(f)\circ\cL(f)$ where $\cL(f)\in \cL$ and $\cR(f)\in \cR$,
    and
  \item $\cL^{\perp}=\cR$ and $\cL=\mbox{}^{\perp}\cR$.
  \end{enumerate}
Here, $\cL$ is called the {\it left class} and $\cR$ is called the {\it right class}.
\end{defn}

A well known example of a factorization system in
ordinary category theory is formed by the surjective and
injective functions in the $1$-category (in fact, $1$-topos) $\cS et$.
There is a similar factorization system in any topos $\cE$. 

\begin{defn}\label{def:coverage}
We say that a map in a topos $\cE$ is a \emph{cover} if it is left orthogonal to every monomorphism (defined in~\ref{def:mono}). 
We say that a family of maps $\{ f_i : X_i \to X \}_{i\in I}$ is a \emph{coverage} of the object $X$ if the resulting map $\bigsqcup_{i\in I} X_i\to X$ is a cover. 
\end{defn}

\begin{rem}\label{rem:coverage}
  Covers are referred to as \emph{effective epimorphisms} in \cite{LurieHT}.  We list
  some elementary facts about covers below.

\begin{enumerate}
 \item
Every map in a topos can be factored
as a cover followed by a monomorphism. 
 \item 
A map  $f : X \to Y$ in the category of spaces $\cS$ is a
cover if and only if the induced map $\pi_0(f):\pi_0 X\to \pi_0 Y$ is surjective.
Let $\term$ be the terminal object in $\cE$.
A pointed space $(X,x)$ is connected if and only if the map $x:\term \to X$
is a cover. 
  \item
A map $f : X \to Y$ in a topos $\cE$ is a cover if and only
if the base change functor $f^* : \cE_{/Y} \to \cE_{/X}$ is
conservative.  
   \item
A family of maps $\{ f_i : X_i \to X \}_{i\in I}$ is a coverage if and only if the family of functors 
$f^*_i:\cE_{/X} \to \cE_{/X_i}$ is collectively conservative, i.e. if the functor
  $$ (f^*_i)_{i\in I}:\cE_{/X} \to \prod_{i\in I}\cE_{/X_i} $$
is conservative.
\item 
Let $D:K\to \cE$ be a diagram in a topos. Then the family of canonical
maps $i_k:D(k)\to \colim(D)$ for $k\in K$ is a coverage.
\end{enumerate}
\end{rem}

Notice that, in light of the orthogonality requirement $\cL\perp \cR$, factorizations
are unique up to unique isomorphism: indeed, if $f=rl = r'l'$ are two $(\cL,\cR)$-factorizations of
a map $f:X\to Y$, then the following squares have a unique diagonal filler $d:Z\to Z'$ and $d':Z'\to Z$
respectively,  since $l\perp r'$ and $l'\perp r$.
\[
  \begin{tikzcd}
    X\ar[r,"l'"]\ar[d, "l"'] & Z'\ar[d,"r'"] \\
    Z \ar[r, "r"'] \ar[ur, dotted] & Y
  \end{tikzcd}
\quad \quad \quad
  \begin{tikzcd}
    X\ar[r,"l"]\ar[d, "l'"'] & Z\ar[d,"r"] \\
    Z' \ar[r, "r'"'] \ar[ur, dotted] & Y
  \end{tikzcd}
\]
We then have $d'd=1_Z$ and $dd'=1_{Z'}$, since the following squares have a unique diagonal filler.
\[
  \begin{tikzcd}
    X\ar[r,"l"]\ar[d, "l"'] & Z\ar[d,"r"] \\
    Z \ar[r, "r"'] \ar[ur, dotted] & Y
  \end{tikzcd} \quad \quad \quad
  \begin{tikzcd}
    X\ar[r,"l'"]\ar[d, "l'"'] & Z'\ar[d,"r'"] \\
    Z' \ar[r, "r'"'] \ar[ur, dotted] & Y
  \end{tikzcd}
\]
Thus, $d$ is an isomorphism and it is unique. 

Given a map $f : X \to Y$, we will occasionally write $\| f \|$ for
the object produced by factoring $f$ with respect to a factorization
system $(\cL, \cR)$ so that we have a commutative diagram
\[
  \begin{tikzcd}
    X \ar[r, "\cL(f)"] \ar[rr, bend right, "f"'] & \| f \| \ar[r, "\cR(f)"] & Y.
  \end{tikzcd}
\]
In such a situation, the intended factorization system will always
be clear from the context. The factorization $f=\cR(f)\cL(f)$
of a map in $\cC$ is functorial in $f\in \cC^\to$.
More precisely, from a commutative square $\alpha:f\to g$ 
\[
    \begin{tikzcd}
       A \ar[d, "f"'] \ar[r, "h"] \ar[dr, phantom, "\scriptstyle{\alpha}"] & C \ar[d, "g"] \\
       B \ar[r, "k"'] & D,
    \end{tikzcd}
  \]  
    we obtain two commutative squares
    \label{lem:LR-fact}
    \[
    \begin{tikzcd}
      A \ar[d, "\cL(f) "'] \ar[r, "h"] \ar[dr, phantom, "\scriptstyle{\cL(\alpha)}"] & C \ar[d, "\cL(g)"] \\
      \| f \| \ar[d, "\cR(f)"'] \ar[r, "\| \alpha \| "] \ar[dr, phantom, "\scriptstyle{\cR(\alpha)}"] &
      \| g \| \ar[d, "\cR(g)"] \\
       B \ar[r, "k"'] & D.
     \end{tikzcd}
   \]
This defines two functors $\cL,\cR:\cC^\to\to \cC^\to$. Let us denote by 
  $\cL^\to $ (resp. $\cR^\to $) the full subcategory of $\cC^\to$ whose objects are
  the maps in  $\cL$ (resp. in $\cR$). Then, $\cL:\cC^\to\to \cL^\to$
  and $\cR:\cC^\to\to \cR^\to$. 
  
  \begin{lem}\label{adjointnessLandR}
  The functor $\cL:\cC^\to\to \cL^\to$ is right adjoint to the inclusion $\cL^\to \subset \cC^\to$
  and the functor $\cR:\cC^\to\to \cR^\to$ is left adjoint 
  to the inclusion $\cR^\to \subset \cC^\to$.
  \end{lem}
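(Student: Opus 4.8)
The plan is to prove the statement for $\cL$ in full and then deduce the statement for $\cR$ by passing to the opposite category: $(\cL,\cR)$ induces the factorization system $(\cR\op,\cL\op)$ on $\cC\op$, the identification $(\cC^\to)\op=(\cC\op)^\to$ carries $(\cR^\to)\op$ to the subcategory of left-class arrows of $\cC\op$, and under this dictionary the $\cL$-functor of $\cC\op$ becomes exactly $\cR\colon\cC^\to\to\cR^\to$. So it suffices to show that the inclusion $\iota\colon\cL^\to\hookrightarrow\cC^\to$ is left adjoint to $\cL\colon\cC^\to\to\cL^\to$, which I would do by exhibiting an explicit counit and checking that it induces equivalences on mapping spaces.

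First I would write down the candidate counit. For $f\colon A\to B$, the factorization $f=\cR(f)\cL(f)$ makes the square
\[
  \begin{tikzcd}
    A \ar[d, "\cL(f)"'] \ar[r, equal] & A \ar[d, "f"] \\
    \|f\| \ar[r, "\cR(f)"'] & B
  \end{tikzcd}
\]
commute; call this morphism $\epsilon_f\colon\cL(f)\to f$ in $\cC^\to$. By the functoriality of the $(\cL,\cR)$-factorization recalled just before the lemma (the squares $\cL(\alpha)$, $\cR(\alpha)$ glue back to $\alpha$), $\epsilon$ is natural in $f$. Since $\cL^\to$ is a \emph{full} subcategory of $\cC^\to$, establishing $\iota\dashv\cL$ reduces to showing that for every $u\in\cL$ and every $f\in\cC^\to$ the post-composition map
\[
  \epsilon_f\circ(-)\colon\map{u}{\cL(f)}_{\cC^\to}\longrightarrow\map{u}{f}_{\cC^\to}
\]
is an isomorphism of spaces; naturality in $u$ and $f$ is then automatic.

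The heart of the argument is a fibrewise analysis of this map. Fix a morphism $\phi\colon u\to f$ in $\cC^\to$, i.e.\ a commutative square with top edge $a\colon A'\to A$ and bottom edge $b\colon B'\to B$ (writing $u\colon A'\to B'$) together with its filler witnessing $fa\simeq bu$. Unravelling the definition of the arrow category, a point of the fibre of $\epsilon_f\circ(-)$ over $\phi$ is precisely a diagonal filler of the square
\[
  \begin{tikzcd}
    A' \ar[d, "u"'] \ar[r, "\cL(f)\circ a"] & \|f\| \ar[d, "\cR(f)"] \\
    B' \ar[r, "b"'] & B
  \end{tikzcd}
\]
whose commutation is supplied by the filler of $\phi$ and the identity $\cR(f)\cL(f)=f$: such a filler is a map $c\colon B'\to\|f\|$ together with homotopies $cu\simeq\cL(f)a$ and $\cR(f)c\simeq b$, coherent with $\phi$, which is exactly the extra data upgrading $\phi$ to a morphism $u\to\cL(f)$ lying over it. Since $u\in\cL$ and $\cR(f)\in\cR$ we have $u\perp\cR(f)$, so by Definition~\ref{pullbackorth} (and the discussion of diagonal fillers immediately following it) the space of diagonal fillers is contractible. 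Hence $\epsilon_f\circ(-)$ has contractible fibres, hence is an isomorphism. This gives the adjunction $\iota\dashv\cL$; as $\iota$ is fully faithful the unit is automatically invertible (concretely $\cL(u)=u$ for $u\in\cL$, by uniqueness of factorizations), as one expects.

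I expect the only delicate point to be the bookkeeping in the fibre identification: matching the homotopy-coherence data of ``a morphism $u\to\cL(f)$ mapping to $\phi$'' with that of ``a diagonal filler of the associated square'' — in particular the orientations of the two homotopies and the triple coherence tying the filler of $\phi$, the image of the upper homotopy under $\cR(f)$, and the lower homotopy together. This is conceptually transparent, and once it is lined up the contractibility is immediate from orthogonality; everything else in the proof is formal.
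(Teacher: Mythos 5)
Your proposal is correct and follows essentially the same route as the paper: the counit you construct, the square $(\mathrm{id}_A,\cR(f))\colon\cL(f)\to f$, is exactly the counit $\epsilon(f)$ the paper exhibits. The only difference is that the paper stops after displaying the unit and counit squares, whereas you go on to verify the universal property --- identifying the fibres of $\epsilon_f\circ(-)\colon\map{u}{\cL(f)}\to\map{u}{f}$ with spaces of diagonal fillers, contractible by $u\perp\cR(f)$ --- and handle the $\cR$-statement by duality; both of these completions are sound.
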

  
  \begin{proof} The unit $\eta(f):f\to \cR(f)$ and the counit $\epsilon(f):\cL(f)\to f$ 
  of the adjunctions are the squares of the following diagram:
  \[
   \begin{tikzcd}
      A \ar[d, "\cL(f) "'] \ar[r, equal] \ar[dr, phantom, "\scriptstyle{\epsilon(f)}"] & A \ar[d, "f"] 
      \ar[dr, phantom, "\scriptstyle{\eta(f)}"] \ar[r,"\cL(f)"] & \| f \| \ar[d,"\cR(f)"] \\
     \| f \| \ar[r, "\cR(f)"'] & B \ar[r, equal] & B.
   \end{tikzcd}
   \]
  \end{proof}

The following elementary closure properties of the right and
left classes of a factorization system are standard.

\begin{lem}\label{lemma-stabilitybylimits}
Given a factorization system $(\cL,\cR)$, then 
\begin{enumerate}
  \item $\cL$ and $\cR$ contain all isomorphisms;
  \item $\cL$ and $\cR$ are closed under composition; 
  \item if $fg\in \cL$ and $g\in \cL$, then $f\in  \cL$;
dually, if $fg\in \cR$ and $f\in \cR$, then $g\in  \cR$;
  \item $\cL$ is stable by cobase change and $\cR$ is stable by base
change (when they are well defined);
  \item the subcategory  $\cL^\to $ of  $\cC^\to$ is closed under colimits
and the subcategory  $\cR^\to $ of  $\cC^\to$ is closed under limits.
\end{enumerate}
\end{lem}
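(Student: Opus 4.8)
The plan is to route everything through the pullback reformulation of orthogonality in Definition~\ref{pullbackorth}: $u\perp f$ holds exactly when the associated naturality square of mapping spaces is cartesian in $\cS$. Then parts~(1)--(4) become instances of the standard calculus of cartesian squares --- pasting and cancellation --- while part~(5) combines this with the facts that $\map{X}{-}$ preserves limits, that $\map{-}{X}$ takes colimits to limits, and that a limit of cartesian squares is cartesian. For~(1): an isomorphism $\phi$ is orthogonal on both sides to \emph{every} map of $\cC$, since in its orthogonality square the two parallel edges induced by $\phi$ (precomposition resp. postcomposition with $\phi$) are equivalences of spaces, so the square is automatically cartesian; hence $\phi\in{}^{\perp}\cR=\cL$ and $\phi\in\cL^{\perp}=\cR$.

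For~(2) and~(3) I would fix $r\in\cR$ and a composable pair $C\xrightarrow{g}B\xrightarrow{f}A$, and apply $\map{-}{X}$, $\map{-}{Y}$ and the natural transformation $r\circ-$ between them to this chain. This produces the rectangle $\map{A}{X}\to\map{B}{X}\to\map{C}{X}$ sitting (via $r\circ-$) over $\map{A}{Y}\to\map{B}{Y}\to\map{C}{Y}$, which is the orthogonality square of $f$ pasted onto that of $g$: thus $f\perp r$, $g\perp r$ and $(fg)\perp r$ assert respectively that the left square, the right square and the outer rectangle are cartesian. Pasting (left and right cartesian $\Rightarrow$ outer cartesian) shows, as $r$ ranges over $\cR$, that $\cL$ is closed under composition; cancellation (outer and right cartesian $\Rightarrow$ left cartesian) gives the first half of~(3). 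Running the dual argument with $\map{A}{-}$, $\map{B}{-}$ and the natural transformation $-\circ l$ for $l\in\cL$, applied to a composable pair in the target, yields the closure of $\cR$ under composition and the dual half of~(3).

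For~(5) I would recall from the discussion following Definition~\ref{generaldescent} that (co)limits in $\cC^\to$ are computed objectwise, so a diagram in $\cL^\to$ amounts to a natural transformation $\alpha:D_0\to D_1$ of $I$-diagrams in $\cC$ with every component $\alpha(i)$ in $\cL$, and its colimit in $\cC^\to$ is the arrow $\colim\alpha$. Given $r\in\cR$: since $\map{-}{Z}$ takes colimits to limits, the orthogonality square of $\colim\alpha$ against $r$ is a limit of the orthogonality squares of the $\alpha(i)$ against $r$; each of these is cartesian (as $\alpha(i)\in\cL$, $r\in\cR$), and a limit of cartesian squares is cartesian, so $\colim\alpha\perp r$, whence $\colim\alpha\in{}^{\perp}\cR=\cL$. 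Since $\cL^\to$ is a full subcategory this is precisely closure under colimits; the statement for $\cR^\to$ and limits is dual, using instead that $\map{Z}{-}$ preserves limits. Finally~(4) follows from~(1) and~(5): the base change of $g\in\cR$ along a map is the pullback in $\cC^\to$ of a cospan two of whose three entries are identity maps --- hence isomorphisms, hence in $\cR$ by~(1) --- and so is itself in $\cR$ by~(5); dually, a cobase change of an $\cL$-map is a pushout in $\cC^\to$ of a span two of whose three entries are identity maps. (Alternatively, one checks directly that the orthogonality square of the base-changed map against any $l\in\cL$ is obtained from that of $g$ by pulling back twice, using that $\map{Z}{-}$ preserves pullbacks.)

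None of this is genuinely difficult --- the lemma is, as stated, standard --- so the real content is bookkeeping. The points demanding care are: tracking the direction of composition in~(2)--(4), so that pasting versus cancellation of cartesian rectangles is invoked in the right place; and, in~(5), remembering that beyond bare pasting one is using the stability of cartesian squares under limits (equivalently, that limits commute with pullbacks), together with the objectwise description of (co)limits in $\cC^\to$ and the appropriate (co)continuity of the mapping-space functors.
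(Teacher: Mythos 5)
Your proof is correct. The paper itself gives no argument for this lemma: it simply declares the closure properties ``standard'' and adds the single remark that part~(5) follows from Lemma~\ref{adjointnessLandR}, i.e.\ from the fact that $\cL^\to$ is coreflective and $\cR^\to$ reflective in $\cC^\to$, so that the former is closed under colimits and the latter under limits. Your route is therefore genuinely different for~(5): instead of invoking (co)reflectivity, you verify closure directly from Definition~\ref{pullbackorth}, using that $\map{-}{Z}$ carries colimits to limits, that limits commute with pullbacks, and that fullness of $\cL^\to$ reduces ``closed under colimits'' to membership of the colimit object. Both are valid; the paper's route is shorter once the factorization functors $\cL,\cR:\cC^\to\to\cC^\to$ and their adjunctions are in hand, while yours is self-contained and has the mild advantage that parts~(1)--(3) and~(5) (and hence~(4), which you correctly derive from them by writing a (co)base change as a (co)limit in $\cC^\to$ of a (co)span whose other two entries are identities) use only the orthogonality conditions $\cL={}^\perp\cR$ and $\cR=\cL^\perp$, never the existence of factorizations. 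The pasting/cancellation bookkeeping in~(2)--(3) is set up with the correct variance, and your observation that the orthogonality square of $\colim\alpha$ against $r$ is the limit of the orthogonality squares of the $\alpha(i)$ against $r$ is exactly the point that makes~(5) work.
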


Notice that the closure property (5) follows from Lemma \ref{adjointnessLandR}.

If $\cM$ is a class of maps in a category $\cC$ and $T\in \cC$
an object we let $\cM_T$ denote the class of maps in the slice
category $\cC_{/T}$ formed by all those maps whose image under the
forgetful functor from $\cC_{/T}$ to $\cC$ land in $\cM$.  Then
one can easily show that factorization systems are compatible with
slicing in the sense that

\begin{lem} \label{slicingfactsystem}
If $(\cL,\cR)$ is a factorization system in a category $\cC$, then the pair 
$({\cL}_T,{\cR}_T)$ is a factorization system in the category $\cC_{/T}$
for every object $T\in \cC$.
\end{lem}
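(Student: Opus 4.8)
The plan is to check the two defining conditions of a factorization system for the pair $(\cL_T,\cR_T)$ in $\cC_{/T}$: that every morphism factors as an $\cL_T$-map followed by an $\cR_T$-map, and that the orthogonality relations $\cL_T{}^\perp=\cR_T$ and $\cL_T={}^\perp\cR_T$ hold, where $\perp$ is now computed in $\cC_{/T}$. The existence of factorizations is immediate: given a morphism $f\colon(A,a)\to(B,b)$ of $\cC_{/T}$, factor the underlying map as $f=\cR(f)\circ\cL(f)$ in $\cC$ and equip $\|f\|$ with the structure map $b\circ\cR(f)\colon\|f\|\to T$. Then $\cL(f)$ and $\cR(f)$ are morphisms over $T$ — the triangle identities are the equations $b\,\cR(f)\,\cL(f)=bf=a$ and $b\,\cR(f)=b\,\cR(f)$ — and by definition of $\cL_T$ and $\cR_T$ they lie in $\cL_T$, resp. $\cR_T$. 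In particular the factorization functor of $\cC_{/T}$ is the one of $\cC$ read off through the forgetful functor $\cC_{/T}\to\cC$.

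The substantive point is the orthogonality $\cL_T\perp\cR_T$. Heuristically: a lifting problem of $u\colon(A,a)\to(B,b)$ against $g\colon(X,x)\to(Y,y)$ in $\cC_{/T}$ is a lifting problem of the underlying maps in $\cC$ that is compatible over $T$, and \emph{any} diagonal filler $d\colon B\to X$ of the underlying square is automatically a morphism over $T$, since $x\circ d=(y\circ g)\circ d=y\circ(\text{bottom edge})=b$; hence the space of fillers in $\cC_{/T}$ coincides with the (contractible) space of fillers in $\cC$ whenever $u\in\cL$ and $g\in\cR$. To make this precise I would use the standard description of mapping spaces in a slice $\infty$-category, $\map{(P,p)}{(Q,q)}_{/T}=\fib_p\!\bigl(\map{P}{Q}\xrightarrow{q\circ-}\map{P}{T}\bigr)$. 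Under this identification, the square of Definition~\ref{pullbackorth} testing $u\perp g$ in $\cC_{/T}$ is the fibre, over the point of the corner squares determined by $a$ and $b$ (compatible because $b\circ u=a$), of the natural map from the square of Definition~\ref{pullbackorth} testing $u\perp g$ in $\cC$ to the constant square with corners $\map{B}{T}$ and $\map{A}{T}$, identity horizontal edges, and vertical edges $-\circ u$. This constant square is cartesian because its horizontal edges are isomorphisms; the $\cC$-square is cartesian because $u\in\cL$ and $g\in\cR$; and the constant square at a point is cartesian. Since cartesian squares are closed under limits in $\cS^{[1]\times[1]}$, the fibre square is cartesian, i.e. $u\perp g$ in $\cC_{/T}$. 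This gives $\cL_T\subseteq{}^\perp\cR_T$ and $\cR_T\subseteq\cL_T{}^\perp$.

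For the reverse inclusions I would run the usual retract argument. Let $f\in{}^\perp\cR_T$ and factor it as above, $f=\cR_T(f)\circ\cL_T(f)$ with $\cL_T(f)\in\cL_T$ and $\cR_T(f)\in\cR_T$. Because $f\perp\cR_T(f)$, the square with top edge $\cL_T(f)$, right edge $\cR_T(f)$, and bottom edge $\id$ has a diagonal filler $d$ with $d\circ f=\cL_T(f)$ and $\cR_T(f)\circ d=\id$; this exhibits $f$ as a retract of $\cL_T(f)$ in the arrow category $\cC_{/T}^\to$. Applying the forgetful functor $\cC_{/T}^\to\to\cC^\to$, the underlying map of $f$ is a retract of a map in $\cL$, and since $\cL={}^\perp\cR$ is closed under retracts we get $f\in\cL$, hence $f\in\cL_T$. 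The dual argument gives $\cL_T{}^\perp\subseteq\cR_T$, completing the verification.

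The only genuinely non-formal ingredient is the orthogonality step, and inside it the single technical input is the description of the mapping spaces of a slice $\infty$-category as fibres of mapping spaces of $\cC$; granting that, what remains is the bookkeeping lemma that a fibre of a cartesian square over a cartesian square is cartesian, which follows from the fact that cartesian squares are the essential image of the (limit-preserving) right Kan extension $\cS^{\Lambda}\hookrightarrow\cS^{[1]\times[1]}$ along the inclusion of the cospan $\Lambda$. Everything else — the factorizations and the maximality — is a formal consequence of the corresponding facts in $\cC$ together with the observation that $\cL_T$ and $\cR_T$ are, by construction, reflected and preserved by the forgetful functor.
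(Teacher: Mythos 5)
Your proof is correct. The paper itself offers no argument for this lemma (it is stated as something ``one can easily show''), so there is nothing to compare against; your write-up simply supplies the standard verification. The two substantive points are exactly where you place them: the identification of slice mapping spaces as fibres of mapping spaces in $\cC$, which reduces the orthogonality square in $\cC_{/T}$ to a fibre of the (cartesian) orthogonality square in $\cC$ over a cartesian square of spaces of maps into $T$, and the retract (or, equivalently, uniqueness-of-factorization) argument for the maximality of the two classes. Both steps are sound, and the closure of cartesian squares under limits in $\cS^{[1]\times[1]}$ that you invoke is exactly the right bookkeeping lemma.
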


A fundamental fact about presentable categories is the following.
\begin{prop}\label{pres-fact}
Let $S$ be a set of maps in a presentable category $\cC$.
Then the class $\cR=S^\bot$ is the right class of a factorization system with $\cL={}^\bot\cR$.
\end{prop}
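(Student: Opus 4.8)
The plan is to construct the factorization system by a transfinite small-object–style argument, but phrased internally to the presentable category so as to stay homotopy invariant. First I would fix a regular cardinal $\kappa$ such that $\cC$ is $\kappa$-presentable and the (co)domains of all maps in the set $S$ are $\kappa$-compact; such a $\kappa$ exists because $S$ is a set. Set $\cR = S^\perp$ and $\cL = {}^\perp\cR$. By the generalities preceding Definition of factorization system, the two conditions $\cL^\perp = \cR$ and $\cL = {}^\perp\cR$ will follow formally once we know every map factors as an $\cL$-map followed by an $\cR$-map: indeed $\cL = {}^\perp\cR$ holds by definition, and $\cL^\perp \supseteq \cR$ is immediate, while the reverse inclusion $\cL^\perp \subseteq \cR = S^\perp$ follows from $S \subseteq \cL$ (each $s \in S$ is left orthogonal to everything in $S^\perp$) together with the standard retract/filler argument once factorizations exist. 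So the whole content is the existence of $(\cL,\cR)$-factorizations.

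To produce factorizations, I would run the small object argument. Given $f : X \to Y$, build a transfinite sequence $X = X_0 \to X_1 \to \cdots \to X_\alpha \to \cdots$ over $Y$ by iterated pushouts: at successor stages form $X_{\beta+1}$ as the pushout of $X_\beta \leftarrow \bigsqcup A_i \to \bigsqcup B_i$, the coproduct running over all commutative squares from a map $s_i : A_i \to B_i$ in $S$ to the current map $X_\beta \to Y$; at limit stages take the colimit. Continue for $\kappa$ steps and set $\|f\| = X_\kappa$, with $\cL(f) : X \to \|f\|$ the transfinite composite and $\cR(f) : \|f\| \to Y$ the structure map. The transfinite composite $\cL(f)$ lies in $\cL$: each generating pushout map is in ${}^\perp\cR$ since $S \subseteq {}^\perp\cR$ and ${}^\perp\cR$ is closed under cobase change and (transfinite) composition by Lemma~\ref{lemma-stabilitybylimits} (items 2 and 4) extended to transfinite colimits via item 5. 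That $\cR(f) \in \cR = S^\perp$ is where $\kappa$-compactness enters: any square from $s_i : A_i \to B_i$ to $\cR(f)$ has its top map $A_i \to X_\kappa$ factoring through some $X_\beta$ with $\beta < \kappa$ (as $A_i$ is $\kappa$-compact and $\kappa$ is regular), hence was among the squares used to build $X_{\beta+1}$, so the required lift $B_i \to X_{\beta+1} \to X_\kappa$ exists; uniqueness of the lift up to contractible choice needs a slightly more careful bookkeeping (e.g. also filling squares recording two lifts, or invoking that orthogonality, not just lifting, is what we need — so one runs the argument with the pushout-product–style cells $s_i$ together with the codiagonals $B_i \sqcup_{A_i} B_i \to B_i$, or equivalently works one dimension at a time).

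The main obstacle, and the step I would be most careful about, is exactly this last point: in the $\infty$-categorical setting the naive small object argument only gives the \emph{weak} lifting (left/right \emph{lifting} property), whereas a factorization system demands \emph{orthogonality}, i.e. contractible spaces of fillers. The clean fix is to enlarge the generating set: replace $S$ by the set $S' = \{\, s_i \pp (\partial\Delta^n \hookrightarrow \Delta^n) \,\}$ of pushout-products of the $s_i$ with boundary inclusions of simplices (all still between $\kappa$-compact objects for suitable $\kappa$), and observe $S^\perp = (S')\text{-inj}$ — being right orthogonal to $S$ is the same as having the right lifting property against $S'$. Then the small object argument for $S'$ produces genuine $(\cL,\cR)$-factorizations. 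Finally I would record that this $\cL$ is the smallest class of maps containing $S$ and closed under cobase change, transfinite composition, and retracts, which is often the more useful description; this is immediate from the construction since every $\cL(f)$ is manifestly built from $S$ by those operations, and conversely any such class is contained in ${}^\perp(S^\perp) = \cL$. For a reference-based alternative one can simply cite \cite[Prop. 5.5.5.7]{LurieHT}, but I would include the sketch above for self-containedness.
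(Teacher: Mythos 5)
Your proposal is correct in outline, but it takes a genuinely different route from the paper, whose ``proof'' is a bare citation of \cite[Proposition 5.5.5.7]{LurieHT} (which you also offer in your last sentence). You rightly reduce everything to the existence of factorizations --- since $S\subseteq\cL$, the identity $\cL^{\perp}=\cR$ is formal and does not even need the retract argument you allude to --- and you correctly isolate the one genuinely $\infty$-categorical obstruction, namely that the naive small object argument yields lifting properties rather than orthogonality. Your fix, saturating $S$ under pushout products with sphere inclusions, is the standard one; note only that in a bare presentable category this pushout product must be taken with respect to the tensoring of $\cC$ over spaces (the paper's $\pp$ is defined via cartesian products in a topos, and $\partial\Delta^n\hookrightarrow\Delta^n$ is not a map in $\cC$), and that at each successor stage the ``set of all squares'' is really an essentially small space, so the attaching coproduct should be the tensor $\map{s_i}{(X_\beta\to Y)}\otimes A_i$ or a chosen set of representatives. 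Lurie's own proof is slicker: for $f:X\to Y$ the objects of $\cC_{/Y}$ whose structure map lies in $S^{\perp}$ are exactly the local objects for a set of morphisms of $\cC_{/Y}$ extracted from $S$, so the theory of accessible localizations of presentable categories produces the factorization $X\to L(X,f)\to Y$ with contractible filler spaces built in; your explicit transfinite construction buys, in exchange for more bookkeeping, the extra description of $\cL$ as the saturation of $S$ under cobase change, transfinite composition and retracts, which you record at the end and which the citation does not give. One small repair: the closure properties you invoke from Lemma~\ref{lemma-stabilitybylimits} are stated there only for an already-given factorization system, whereas you are in the middle of constructing one; you should instead observe directly that any class of the form ${}^{\perp}\cN$ is closed under cobase change, composition and colimits in $\cC^{\to}$, which holds unconditionally.
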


\begin{proof}
  \cite[Proposition 5.5.5.7]{LurieHT}  
\end{proof}

\subsection{Operations on maps}
\label{sec:map-ops}

We fix a topos $\cE$ throughout.  Given two maps $u: A\to B$ and
$v: S\to T$ in $\cE$, we have a commutative diagram
\begin{equation*} 
  \begin{tikzcd}
    A\times S \ar[d, "u \times S"'] \ar[r, "A \times v"] & A\times T \ar[d, "u \times T"] \\
    B\times S \ar[r, "B \times v"'] & B\times T
  \end{tikzcd}
\end{equation*}
where eg. $A\times v$ stands for the map
  $$ \id_A\times v: A\times S\to A\times T. $$
We define the {\em pushout product of $u$ and $v$},
denoted $u \pp v$, as the cocartesian gap map of the square above.  Explicitly, then 

\[
u \pp v = (B\times S) \sqcup_{(A\times S)} (A\times T) \to B\times T.
\]

\begin{exmp}\label{example-po} 
The pushout product has a number of important special cases:
\begin{enumerate} 
\item \label{pppointed}
Recall that $\term$ is the terminal object of $\cE$.
  A \emph{pointed object} of $\cE$ is a pair $(A, a)$ where $a$ is a
  map $\term\to A$.  For $(A, a)$ and $(B, b)$ two pointed objects,
  one sees that the pushout product
  $a\pp b = (\term \to A)\pp (\term \to B)$ is the canonical inclusion
  of the wedge into the product:
  \[
    A \vee B \to A \times B.
  \]
\item \label{joinandpp}
  The {\em join} of two objects $A$ and $B$ in $\cE$,
  denoted $A\join B$, is the pushout of the diagram
  \[
    \begin{tikzcd}
      A\times B \ar[d] \ar[r] & B \\
      A 
    \end{tikzcd}
  \]
One finds immediately from the definition that:
  $$
  (A\to \term)\pp (B\to \term) = (A\join B)\to \term.
  $$
Let $S^0=\term \sqcup \term$ be the sphere of dimension 0 in $\cE$.
Then 
  $$ S^0\join A=\Sigma A$$  
is the unreduced suspension of $A$. In this way one can define spheres in any topos: 
set $S^{-1}=\ini$ and $S^{n+1}=S^0\join S^n$ for every $n\geq -1$.
If $s_n:S^n\to \term$, then $s_0\pp s_n=s_{n+1}$ for every $n\geq -1$.
 \item 
  For any map $u:A\to B$, 
  $$   s_0\pp u = \nabla u: B\sqcup_A B \to B $$
  is the \emph{codiagonal} of $u$, that is, the map defined by the following diagram
  with a pushout square,
  \[
    \begin{tikzcd}
      A \ar[d, "u"'] \ar[r, "u"] & B \ar[d, "i_2"]  \ar[ddr, equals, bend left] & \\
      B \ar[r, "i_1"'] \ar[rrd, equals, bend right] & B \sqcup_A B \ar[dr, "\nabla u"] & \\
      && B.
    \end{tikzcd}
  \]
  In particular, $\nabla(A\to \term)=s_0\pp (A\to \term)=(\Sigma A\to \term)$.
\item \label{fib-join}
 The pushout product $f\pp g$ of two maps $f:X\to A$
 and $g:Y\to B$ in a topos $\cE$ can be thought as the
  "external" join product of the fibers of $f$ and $g$.  Indeed, letting
  $a:\term\to A$ and $b:\term\to B$ be points of $A$ and $B$, we define
  $f^{-1}(a)$ and $g^{-1}(b)$ to be the fibers of $f:X\to A$ and $g:Y\to A$ at
  $a$ and $b$ respectively:
  \[
    \begin{tikzcd}
      f^{-1}(a) \ar[d] \ar[r] \pbmark & X \ar[d,"f"]\\
      \term \ar[r,"a"] & A
    \end{tikzcd}
    \qquad\textrm{and}\qquad
    \begin{tikzcd}
     g^{-1}(b) \ar[d] \ar[r] \pbmark & Y \ar[d,"g"]\\
      \term \ar[r,"b"] & B.
    \end{tikzcd}
  \]
   The fiber of $f\pp g$ at $(a,b):\term\to A\times B$ is similarly
  defined by the pullback
  \[
    \begin{tikzcd}
      (f\pp g)^{-1}(a,b) \ar[d] \ar[r] \pbmark & (A\times Y)\sqcup_{(X\times Y)} (X\times B) \ar[d]\\
      \term \ar[r,"{(a,b)}"] & A\times B.
    \end{tikzcd}
  \]
By the universality of colimits in $\cE$ we have
  \[
    (f\pp g)^{-1}(a,b) = f^{-1}(a) \join g^{-1}(b).
  \]
\item \label{pushout-product-and-base-change}
For an object $Z$ of $\cE$, we will denote by $\square_Z$ the pushout product in the ca\-te\-gory $\cE_{/Z}$.
If $f:W\to Z$ is a map in $\cE$, then the base change functor $f^*: \cE_{/Z}\to \cE_{/W}$
preserves pushout products: we have a canonical isomorphism
$$
f^*\!\left(\pprodovr{Z}{u}{v} \right)=\pprodovr{W}{\left(f^*u\right)}{\left(f^*v\right)}  
$$
for any two maps $u$ and $v$ in the topos $\cE_{/Z}$.
In fact, the map $u\ppz v$ in the topos $\cE_{/Z}$ is a base change of the map $u\pp v$, which can be seen as living 
over $Z\times Z$, along the diagonal $Z\to Z\times Z$.
\end{enumerate}
\end{exmp}

Dually, the \emph{pullback hom} $\ph{u}{f}$ of two maps $u:A\to B$ and
$f:X\to Y$ in $\cE$ is defined to be the cartesian gap map of the following
commutative square in $\cS$
\begin{equation*} 
   \begin{tikzcd}
    \map{B}{X} \ar[d, " \map{u}{X}"'] \ar[r, " \map{B}{f}"] & \map{B}{Y} \ar[d, " \map{u}{Y}"] \\
    \map{A}{X} \ar[r, " \map{A}{f}"']  & \map{A}{Y}.
  \end{tikzcd}
\end{equation*}

\begin{rem} \label{rem:pbh-detects-orth} 
Notice that the map $u$ is left orthogonal to the map $f$ if and only
if the map $\ph{u}{f}$ is invertible.
The codomain of $\ph{u}{f}$ is properly denoted by $\map{u}{f}$, as it is
exactly the space of maps $u\to f$ in the arrow category $\cE^\to$. 
\end{rem}

As defined, the pullback hom $\ph{u}{f}$
is a map of \emph{spaces}.  However, as a topos
$\cE$ is cartesian closed, it admits an internal hom
$\intmap{-}{-} : {\cE}^{\op} \times \cE \to \cE$.  We may thus define
an \emph{internal pullback hom}, denoted $\intph{u}{f}$ as the
cartesian gap map of the diagram
\begin{equation*} 
   \begin{tikzcd}
    \intmap{B}{X} \ar[d, " \intmap{u}{X}"'] \ar[r, " \intmap{B}{f}"] & \intmap{B}{Y} \ar[d, " \intmap{u}{Y}"] \\
    \intmap{A}{X} \ar[r, " \intmap{A}{f}"']  & \intmap{A}{Y}.
  \end{tikzcd}
\end{equation*}

The pushout product $- \pp -$ and the internal pullback hom
$\intph{-}{-}$ are part of a symmetric monoidal structure on the category
$\cE^\to$.  It follows that we have a natural isomorphism
  \[
    \map{u \pp v}{f} = \map{u}{\intph{v}{f}}
  \]
  for all $u, v, f \in \cE^\to$.  Furthermore, this isomorphism can itself
  be internalized, giving

\begin{prop}\label{appendixpp-adj2}
We have natural isomorphisms
  \[
    \intmap{u \pp v}{f} = \intmap{u}{\intph{v}{f}} \quad {\rm and} \quad
    \intph{u \pp v}{f} = \intph{u}{\intph{v}{f}}.
  \]
\end{prop}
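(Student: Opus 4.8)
The plan is to obtain both identities formally from the external adjunction isomorphism $\map{u \pp v}{f} = \map{u}{\intph{v}{f}}$ by an internalisation argument, using only that $(\cE^\to, \pp, \intph{-}{-})$ is symmetric monoidal closed --- in particular that $\pp$ is associative and that $\intph{-}{-}$ is the corresponding internal hom. The key observation is that the second identity, $\intph{u \pp v}{f} = \intph{u}{\intph{v}{f}}$, is exactly the ``exponential law'' for this internal hom, while the first identity is recovered from it by passing to codomains: for a pair of arrows $a, b$ the object $\intmap{a}{b}$ is, by construction, the codomain of $\intph{a}{b}$.

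First I would prove $\intph{u \pp v}{f} = \intph{u}{\intph{v}{f}}$ as an isomorphism in $\cE^\to$. Since $\cE^\to$ is presentable, the Yoneda lemma reduces this to an isomorphism of mapping spaces $\map{w}{\intph{u \pp v}{f}} = \map{w}{\intph{u}{\intph{v}{f}}}$, natural in $w \in \cE^\to$, which I would produce as the composite
\[
\map{w}{\intph{u \pp v}{f}}
= \map{w \pp (u \pp v)}{f}
= \map{(w \pp u) \pp v}{f}
= \map{w \pp u}{\intph{v}{f}}
= \map{w}{\intph{u}{\intph{v}{f}}} ,
\]
in which the outer two isomorphisms are instances of the external adjunction and the middle one is the associativity constraint for $\pp$. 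Taking codomains of the resulting isomorphism then yields $\intmap{u \pp v}{f} = \intmap{u}{\intph{v}{f}}$.

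I do not expect a genuine obstacle here; the one point requiring a little care is to make the Yoneda step honest in the $(\infty,1)$-categorical setting, i.e.\ to check that the displayed isomorphisms assemble into an actual natural transformation of $\cS$-valued functors of $w$, rather than a mere pointwise family. This is precisely where the coherence of the symmetric monoidal closed structure on $\cE^\to$ (naturality of the external adjunction, and the coherence of the associator for $\pp$) is invoked, so it amounts to an appeal to that structure, which the text takes as given.

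If one would rather not run Yoneda in the arrow category, the same computation can be carried out in $\cE$ itself. The point is that $\cE^\to$ is tensored over $\cE$ by $(\ini \to Z) \pp w = Z \times w$, and moreover $\map{(\ini \to Z)}{h}$ is naturally the mapping space into the codomain of $h$ (the mapping spaces out of $\ini$ being terminal); combining these with the external adjunction gives $\map{Z}{\intmap{w}{g}} = \map{(\ini \to Z) \pp w}{g}$ naturally in $Z$. One then computes, naturally in $Z$,
\[
\map{Z}{\intmap{u \pp v}{f}}
= \map{(\ini \to Z) \pp (u \pp v)}{f}
= \map{((\ini \to Z) \pp u) \pp v}{f}
= \map{(\ini \to Z) \pp u}{\intph{v}{f}}
= \map{Z}{\intmap{u}{\intph{v}{f}}} ,
\]
and concludes by Yoneda in $\cE$ for the first identity; the arrow-level second identity then follows by the analogous manipulation one categorical level up. Either route is routine once the monoidal-closed structure on $\cE^\to$ recalled in the text is in hand.
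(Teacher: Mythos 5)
Your proof is correct and is essentially the argument the paper intends: the paper states this proposition without any written proof, merely asserting that the external adjunction $\map{u \pp v}{f} = \map{u}{\intph{v}{f}}$ coming from the symmetric monoidal closed structure on $\cE^\to$ ``can itself be internalized,'' and your Yoneda computation (via associativity of $\pp$, the identification of $\intmap{a}{b}$ as the codomain of $\intph{a}{b}$, and the tensoring $(\ini \to Z)\pp w = Z\times w$) is precisely the standard way to carry out that internalization. No gap.
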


A useful property of the pushout product is the following: if $u$ is
invertible, then so is the map $u\pp v$ for any map $v$. 
The pullback hom enjoys a similar absorption property: the map $\intph{v}{f}$ is invertible as soon as either $v$ or $f$ is.

\begin{exmp}\label{exam:diagonalsaspbhoms}
We note some useful special cases of the pullback hom.
\begin{enumerate} 
  \item 
For any map $f:X\to Y$, 
\begin{equation*}
  \intph{s_0}{f} = \Delta f:X\to X\times_YX
\end{equation*}
is {\it the diagonal of $f$}, that is, the map defined by the following diagram with a pullback square:
\[ \begin{tikzcd}
      X \ar[dr, "\Delta f"] \ar[ddr, equals, bend right] \ar[rrd, equals, bend left] & & \\
      & X \times_Y X \ar[d, "p_1"'] \ar[r, "p_2"] \pbmark & X \ar[d, "f"] \\
      & X \ar[r, "f"'] & Y
   \end{tikzcd} \]
\item For a pair of objects $A$ and $X$ in a cartesian closed
  category, the \emph{$A$-diagonal of $X$} is defined to be the map
  $$\Delta_A(X)=\intph{A\to\term}{X\to\term}:X=\intmap{1}{X}\to \intmap{A}{X}. $$
  Intuitively speaking, the map $\Delta_A(X)$ associates
  to each element of $X$ the map from $A$ to $X$ which is constant at
  that element.  Of course, to make this precise in full generality we
  must speak of generalized elements, but we will not dwell on that
  issue here.
\end{enumerate}
\end{exmp}

\begin{defn} \label{defstronglyorth}
Let $u:A\to B$ and $f:X\to Y$ be two maps in a topos $\cE$.
We say that $u$ is {\it internally left orthogonal} to $f$ (and $f$ is {\it internally right orthogonal} to $g$) if the following square 
\begin{equation*} 
  \begin{tikzcd}
    \intmap{B}{X} \ar[d, "\intmap{u}{X}"'] \ar[r, "\intmap{B}{f}"] & \intmap{B}{Y} \ar[d, "\intmap{u}{Y}"] \\
    \intmap{A}{X} \ar[r, "\intmap{A}{f}"']  & \intmap{A}{Y}
  \end{tikzcd}
 \end{equation*}
is cartesian.
In this case we will write $u \iperp f$.
\end{defn}

\begin{rem} \label{rem:intpbh-detects-int-orth} 
Analogous to Remark~\ref{rem:pbh-detects-orth} about the ordinary pullback hom and (external) orthogonality, internal orthogonality is detected by the internal pullback hom: $u$ is internally left orthogonal to $f$ if and only if the map $\intph{u}{f}$ is invertible.
\end{rem}

\begin{rem} \label{internalorth}
For every object $Z$ in $\cE$ one has 
% \map{Z,\intph{u}{f}}=\ph{Z\times u}{f}$.
$\map{Z}{\intph{u}{f}} =\ph{Z\times u}{f}$.
Hence it follows from the Yoneda lemma that the relation $u \iperp f $ is equivalent to the relation $Z\times u\perp f$ for every 
object $Z$ in $\cE$. In particular, $u\iperp f$ implies $u\perp f$.
\end{rem}

\begin{exmp} \label{extrongorth}
Here are two special cases of internal orthogonality.
\begin{enumerate} 
  \item 
The category of spaces $\cS$ is cartesian closed
and we have $\intph{u}{f}=\ph{u}{f}$ for any pair of maps $u,f\in \cS$.
It follows that the relations $u\iperp f$ and $u\perp f$
are the same in $\cS$.
  \item
Let $A$ and $X$ be objects in a cartesian closed category $\cC$.
By Remark~\ref{rem:intpbh-detects-int-orth}, $(A\to\term)\iperp (X\to\term)$ if and only if the $A$-diagonal of $X$ 
  $$ \Delta_A(X)=\intph{A\to\term}{X\to\term}:X\to \intmap{A}{X} $$ 
from Example~\ref{exam:diagonalsaspbhoms} is invertible.
\end{enumerate} 
\end{exmp}
 
\begin{prop} \label{pres-fact2} 
Let $S$ be a set of maps in a topos $\cE$.
Then $\cR:=S^{\iperp}$ is the right class of a factorization system $(\cL,\cR)$ with 
$\cL:={^\perp}\cR={}^{\iperp}\cR$.
\end{prop}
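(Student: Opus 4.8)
The plan is to deduce this from Proposition~\ref{pres-fact}, the corresponding statement for ordinary orthogonality, and then to upgrade the identity $\cL={}^\perp\cR$ to $\cL={}^{\iperp}\cR$ by showing that the left class is stable under the endofunctors $Z\times(-)$ of $\cE^\to$. By Remark~\ref{internalorth}, $f\in S^{\iperp}$ if and only if $Z\times u\perp f$ for every $u\in S$ and every $Z\in\cE$; thus $S^{\iperp}=\widetilde S{}^\perp$ with $\widetilde S:=\{Z\times u:Z\in\cE,\ u\in S\}$. This is a proper class, but since $\cE$ is presentable we may fix a \emph{small} set $\cG$ of objects of $\cE$ such that every object is a colimit of a diagram valued in $\cG$, and I claim $S^{\iperp}=S'{}^\perp$ for the set $S':=\{G\times u:G\in\cG,\ u\in S\}$. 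Only the inclusion $S'{}^\perp\subseteq S^{\iperp}$ needs an argument: given $f\in S'{}^\perp$, $u\in S$, and $Z=\colim_i G_i$ with $G_i\in\cG$, the functor $\cE\to\cE^\to$ sending $Z'$ to $Z'\times u$ preserves colimits, because its composites with the domain and codomain projections are the colimit-preserving functors $Z'\mapsto Z'\times A$ and $Z'\mapsto Z'\times B$ (each a base change along $A\to\term$, resp.\ $B\to\term$, followed by the forgetful functor from a slice), and colimits in $\cE^\to$ are computed pointwise. Hence $Z\times u=\colim_i(G_i\times u)$ in $\cE^\to$; since mapping spaces carry colimits to limits and a limit of cartesian squares is cartesian, the class ${}^\perp\{f\}$ is closed under colimits in $\cE^\to$, so $G_i\times u\in{}^\perp\{f\}$ forces $Z\times u\perp f$, i.e.\ $u\iperp f$. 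Applying Proposition~\ref{pres-fact} to the set $S'$ now yields a factorization system with right class $S'{}^\perp=S^{\iperp}=:\cR$ and left class ${}^\perp\cR=:\cL$; moreover, as part of the small object argument underlying that proposition, $\cL$ coincides with the smallest strongly saturated class of morphisms containing $S'$.

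It remains to prove $\cL={}^{\iperp}\cR$. Applying Remark~\ref{internalorth} once more, $u\in{}^{\iperp}\cR$ exactly when $Z\times u\in{}^\perp\cR=\cL$ for every $Z\in\cE$, so taking $Z=\term$ gives ${}^{\iperp}\cR\subseteq\cL$. For the converse it suffices to show that for each $Z$ the preimage $\cL_Z:=\{u\in\cE^\to:Z\times u\in\cL\}$ contains $\cL$. The functor $Z\times(-)$ preserves colimits, cobase changes and retracts in $\cE^\to$ (again because it preserves colimits on $\cE$ and everything is pointwise), and $\cL$ is closed under these operations by Lemma~\ref{lemma-stabilitybylimits}(4),(5) together with the closure of a left class under retracts; hence $\cL_Z$ is strongly saturated. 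It contains $S'$: for $u\in S$, $G\in\cG$ and any $Z$ one has $Z\times(G\times u)=(Z\times G)\times u$, which lies in ${}^\perp(S^{\iperp})=\cL$ because every $f\in\cR=S^{\iperp}$ satisfies $W\times u\perp f$ for all $W$, in particular for $W=Z\times G$. Since $\cL$ is the smallest strongly saturated class containing $S'$, we get $\cL\subseteq\cL_Z$; thus every $u\in\cL$ has $Z\times u\in\cL$, hence $Z\times u\perp f$ for all $Z$ and all $f\in\cR$, i.e.\ $u\in{}^{\iperp}\cR$.

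The one substantive point is the stability of $\cL$ under $Z\times(-)$; everything else is bookkeeping. This is also where presentability enters a second time: a priori a left class is only closed under colimits, cobase change and retracts, not under cartesian product with a fixed object, so to obtain that stability we must use that $\cL$ is \emph{generated} by the set $S'$ via the small object argument — a feature special to systems of the form $S^{\iperp}$.
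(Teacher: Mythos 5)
Your first half is essentially sound and runs parallel to the paper's proof: both reduce $S^{\iperp}$ to an external orthogonality class $(S')^{\perp}$ for the small set $S'=\{G\times u\}$ indexed by a set of generators, and then invoke Proposition~\ref{pres-fact}. Where the paper obtains the inclusion $(S')^{\perp}\subseteq S^{\iperp}$ from the identity $\ph{Z\times u}{f}=\map{Z}{\intph{u}{f}}$ together with the fact that generators detect isomorphisms, you instead write $Z=\colim_i G_i$, use universality of colimits to identify $Z\times u$ with $\colim_i (G_i\times u)$ in $\cE^{\to}$, and use that ${}^{\perp}\{f\}$ is closed under colimits in the arrow category. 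That variant is correct.

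The second half, however, has a genuine gap. Your proof that $\cL\subseteq{}^{\iperp}\cR$ rests entirely on the assertion that $\cL={}^{\perp}((S')^{\perp})$ is the \emph{smallest strongly saturated} class containing $S'$, attributed to ``the small object argument underlying'' Proposition~\ref{pres-fact}. As stated this is false: strong saturation in Lurie's sense includes two-out-of-three, which the left class of an orthogonal factorization system does not satisfy. Concretely, for $S=\{s_0:S^0\to \term\}$ in $\cS$ one has $\cL=$ covers, but the smallest strongly saturated class containing $s_0$ contains $\term\to S^0$ (two-out-of-three applied to $\term\to S^0\to\term=\id$) and hence its cobase changes $X\to X\sqcup\term$, which are monomorphisms, not covers. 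Even if one weakens ``strongly saturated'' to exactly the closure properties you use (colimits in $\cE^{\to}$, cobase change, retracts), the minimality of $\cL$ among such classes containing $S'$ is a substantive theorem that neither Proposition~\ref{pres-fact} nor \cite[Prop.\ 5.5.5.7]{LurieHT} supplies, and you do not prove it. The step can be repaired without any saturation theorem: since $Z\times u=(\ini\to Z)\pp u$, the adjunction of Proposition~\ref{appendixpp-adj2} gives $\intph{v}{\intmap{Z}{f}}=\intph{(\ini\to Z)\pp v}{f}=\intph{\ini\to Z}{\intph{v}{f}}$, which is invertible for every $v\in S$ and $f\in S^{\iperp}$ because the pullback hom absorbs invertible maps; hence $\intmap{Z}{f}\in\cR$ for every $Z$. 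Since $Z\times u\perp f$ is equivalent to $u\perp\intmap{Z}{f}$ by the same adjunction, every $u\in\cL={}^{\perp}\cR$ satisfies $Z\times u\perp f$ for all $Z$, i.e.\ $u\iperp f$. This direct computation is presumably what the paper intends by leaving the relation $\cL\iperp\cR$ to the reader.
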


\begin{proof} Let $G$ be a set of generators of $\cE$ and put $G\times S:=\{Z\times u\ |\ Z\in G, u\in S\}$.
Let us show that $(G\times S)^\perp=S^{\iperp}$.

By Remark~\ref{internalorth}, the relation $u \iperp f $ is equivalent to the relation $Z\times u\perp f$ for every $Z$ in $\cE$. 
Thus $S^{\iperp}\subset (G\times S)^\perp$.

Conversely, if $f\in (G\times S)^\perp$ and $u\in S$, then we have $Z\times u \perp f$
for every $Z\in G$. This means that the map 
\[ \ph{Z\times u}{f}=\map{Z}{\intph{u}{f}} \]
is invertible for every object $Z\in G$.
It follows that the map $\intph{u}{f}$ is invertible, since $G$ is a set
of generators. Thus, $(G\times S)^\perp \subset S^{\iperp}$.

From Proposition~\ref{pres-fact} now follows that $\cR:=S^{\iperp}=(G\times S)^\perp$ is the right class of a factorization system $(\cL,\cR)$ 
with $\cL:={^\perp}\cR$. The relation $\cL\iperp \cR$ is left to the reader.
\end{proof}

\subsection{Connectedness and Truncation}
\label{sec:conn-trun}

The factorization system of covers and monomorphisms in a topos $\cE$
belongs to a whole family of factorization
systems corresponding to $n$-connected and $n$-truncated maps, to
which we now turn. 

\begin{defn} 
The notion of \emph{$n$-truncated map} $f : X \to Y$ in a topos $\cE$
is defined by induction on $n\geq -2$:
\begin{itemize}
\item $f$ is said to be $(-2)$-truncated if it is invertible.
\item $f$ is said to be $(n+1)$-truncated if the diagonal map
  \[ \Delta f : X \to X \times_Y X \]
  is $n$-truncated.
\end{itemize}
We write $T_n(\cE)$ for the class of $n$-truncated maps in a topos
$\cE$. An object $X\in \cE$ is said to be \emph{$n$-truncated} if the
map $X\to \term$ is $n$-truncated.
\end{defn}

By definition, a map is $(-1)$-truncated if its diagonal is an
isomorphism.  Hence a map is $(-1)$-truncated if and only if it is a
\emph{monomorphism} as defined in Definition~\ref{def:mono}. An
object $X\in \cE$ is $(-1)$-truncated if and only if the map
$X\to \term$ is a monomorphism.

A space $X$ is $n$-truncated if and only if $X$ is an $n$-th Postnikov
section, that is, if the homotopy group $\pi_k(X)$ vanish for
$k > n$ and all basepoints.  More generally, a map of spaces is
$n$-truncated if and only if all of its fibers are $n$-truncated
spaces.
Thus, a space is $(-2)$-truncated if it is contractible and a map
is $(-2)$-truncated if it is an equivalence.
A space is $(-1)$-truncated if it is either empty or contractible
and a map is $(-1)$-truncated if it is a monomorphism.
Finally, a space is $0$-truncated if it is equivalent to a discrete space
and a map is $0$-truncated if it is equivalent to
a covering space map.

\begin{rem} \label{truncatedoverB}
A map $f:X\to Y$ in a topos $\cE$ is $n$-truncated
if and only if the object $(X,f)$ of $\cE_{/Y}$ 
is $n$-truncated.
If $Z$ is an object of $\cE$, then a map $f:(X,p)\to (Y,q)$ 
in $\cE_{/Z}$ is $n$-truncated if and only
if the map $f:X\to Y$ in $\cE$ is $n$-truncated.
\end{rem}

Recall that in Example~\ref{example-po}(2), the $n$-sphere $S^n$ for
$n\geq -1$ is defined for an arbitrary topos $\cE$.  Moreover, if 
$s_n : S^n \to \term$ is the canonical map, then $s_{n+1}=s_0\pp s_n$
for every $n\geq -1$.

\begin{lem} \label{caractntruncated}
A map $f : X \to Y$ is $n$-truncated if and only if $s_{n+1} \iperp f$.
\end{lem}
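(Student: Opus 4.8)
The plan is to prove the equivalence $f$ is $n$-truncated $\iff$ $s_{n+1} \iperp f$ by induction on $n \geq -2$, exploiting two facts already available: the recursive characterization of $n$-truncatedness via diagonals (Definition of $n$-truncated map), and the recursive structure $s_{n+1} = s_0 \pp s_n$ together with the adjunction $\intph{u \pp v}{f} = \intph{u}{\intph{v}{f}}$ from Proposition~\ref{appendixpp-adj2}.

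\textbf{Base case.} For $n = -2$, I need: $f$ is invertible $\iff$ $s_{-1} \iperp f$. Now $s_{-1} : S^{-1} = \ini \to \term$ is the map from the initial object. The internal pullback hom $\intph{s_{-1}}{f}$ fits into the square with corners $\intmap{\term}{X}, \intmap{\term}{Y}, \intmap{\ini}{X}, \intmap{\ini}{Y}$; since $\ini$ is initial, $\intmap{\ini}{X} = \intmap{\ini}{Y} = \term$, and $\intmap{\term}{X} = X$, $\intmap{\term}{Y} = Y$. The cartesian gap map of this square is then just $f$ itself (the pullback of $X \to Y \to \term \leftarrow \term$ is $X$... wait, one checks the gap map $X \to Y \times_\term \term = Y$ is $f$), so $s_{-1} \iperp f$ means exactly that $f$ is invertible. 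Hence the base case reduces to this direct computation.

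\textbf{Inductive step.} Assume the statement for $n$; prove it for $n+1$. By definition $f$ is $(n+1)$-truncated $\iff$ $\Delta f$ is $n$-truncated $\iff$ (by the inductive hypothesis applied to $\Delta f$) $s_{n+1} \iperp \Delta f$. So it suffices to show $s_{n+1} \iperp \Delta f \iff s_{n+2} \iperp f$. Now recall from Example~\ref{exam:diagonalsaspbhoms}(1) that $\Delta f = \intph{s_0}{f}$. Using Remark~\ref{rem:intpbh-detects-int-orth}, $s_{n+1} \iperp \Delta f$ holds iff $\intph{s_{n+1}}{\Delta f} = \intph{s_{n+1}}{\intph{s_0}{f}}$ is invertible. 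By Proposition~\ref{appendixpp-adj2}, $\intph{s_{n+1}}{\intph{s_0}{f}} = \intph{s_{n+1} \pp s_0}{f} = \intph{s_0 \pp s_{n+1}}{f} = \intph{s_{n+2}}{f}$, using commutativity of $\pp$ and $s_0 \pp s_{n+1} = s_{n+2}$. By Remark~\ref{rem:intpbh-detects-int-orth} again, the invertibility of $\intph{s_{n+2}}{f}$ is exactly $s_{n+2} \iperp f$, completing the induction.

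\textbf{Main obstacle.} The substantive content is entirely packed into Proposition~\ref{appendixpp-adj2}, which is assumed; given that, the argument is a clean bookkeeping exercise in chasing the recursions for $s_n$ and for $\Delta f$ in parallel. The only genuine thing to verify by hand is the base case computation identifying the cartesian gap map of the degenerate square (with two terminal corners coming from $\intmap{\ini}{-}$) with $f$ itself — and, relatedly, making sure the indexing conventions line up so that ``$f$ invertible'' is precisely ``$(-2)$-truncated.'' I would also want to double-check that $\Delta f$ as an object of $\cE_{/(X \times_Y X)}$ is $n$-truncated in the appropriate sense matching Remark~\ref{truncatedoverB}, but this is immediate since $n$-truncatedness of a map is intrinsic and does not reference any slice. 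No step here should be harder than unwinding definitions once Proposition~\ref{appendixpp-adj2} is in hand.
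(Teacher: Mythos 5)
Your proposal is correct and follows essentially the same route as the paper: induction on $n$ using $\Delta f = \intph{s_0}{f}$, the adjunction $\intph{u\pp v}{f}=\intph{u}{\intph{v}{f}}$ of Proposition~\ref{appendixpp-adj2}, and the identity $s_0\pp s_n=s_{n+1}$. The only difference is that you spell out the base-case computation $\intph{s_{-1}}{f}=f$, which the paper simply asserts.
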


\begin{proof}
 We wish to show that a map $f : X \to Y$ is $n$-truncated if and only if the map 
  $\intph{s_{n+1}}{f}$ is invertible.
  The proof proceeds by induction on $n \geq -2$.   The result is clear
  if $n=-2$, since $\intph{s_{-1}}{f}=f$. Let us suppose $n>-2$.
  By definition, $f$ is $n$-truncated if and only if the map $\Delta f$ is $(n-1)$-truncated.
  By the induction hypothesis,the latter holds if and only if 
  the map $\intph{s_n}{\Delta f} $ is invertible.
  But we have canonical isomorphisms:
  $$\intph{s_n}{\Delta f} =\intph{s_{n}}{\intph{s_0}{f}} =\intph{s_{n}\pp s_0}{f}= \intph{s_{n+1}}{f}.$$
Hence the map $\intph{s_n}{\Delta f}$ is invertible if and only if the map $\intph{s_{n+1}}{f}$  
  is invertible. This shows that $f$ is $n$-truncated if and only if
  the map $\intph{s_{n+1}}{f}$ is invertible.
  \end{proof}

\begin{defn}\label{defn:n-connectedmap}
A map $f : X \to Y$ in a topos $\cE$ is said to be \emph{$n$-connected} if it is left orthogonal to all $n$-truncated maps.
An object $X$ is said to be \emph{$n$-connected} if the map $X\to \term$ is $n$-connected.
We write $C_n(\cE)$ for the class of $n$-connected maps in a topos $\cE$.
\end{defn}

A map is $(-1)$-connected if and only if it is a cover.
Every map is $(-2)$-connected.

\begin{rem}
  \label{rem:conn-conv}
  Note that this definition of $n$-connectedness, while consistent with
  the standard usage for objects (that is, say for topological
  spaces), differs from the convention for maps: an
  $n$-connected map in our sense is $(n+1)$-connected map in the
  traditional sense, see for example~\cite[Page 302]{G92}).
\end{rem}

\begin{prop} \label{n-trunc-connectedfactsystem}
The pair $(C_n(\cE),T_n(\cE))$ is a factorization system in any topos 
$\cE$ and any $n\ge -2$. 
\end{prop}

\begin{proof} If $s_{n+1}=\{S^{n+1}\to\term\}$, then $T_n(\cE)={s_{n+1}}^{\iperp}$ by Lemma~\ref{caractntruncated}. The result then follows from Proposition~\ref{pres-fact2}. 
\end{proof}

In particular, $(C_{-2}(\cE),T_{-2}(\cE))$ is the factorization system of isomorphisms and all maps; $(C_{-1}(\cE),T_{-1}(\cE))$ is the factorization system of covers and mono\-mor\-phisms.

The following corollary shows how the operations of Section \ref{sec:map-ops}
interact with connectedness and truncation.

\begin{cor}\label{cor:raise-lower}
  Suppose that $u: A \to B$ is $m$-connected, $v : C \to D$ is $n$-connected
  and $f : X \to Y$ is $p$-truncated. Then:
  \begin{enumerate}
  \item $\intph{s_k}{f}$ is $(p-k-1)$-truncated. 
  \item $u \pp s_k$ is $(m+k+1)$-connected.
  \item $\intph{u}{f}$ is $(p-m-2)$-truncated.
  \item $u \pp v$ is $(m+n+2)$-connected.
  \end{enumerate}
\end{cor}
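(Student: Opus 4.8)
The plan is to derive all four statements from the single characterization of truncated maps in Lemma~\ref{caractntruncated} (namely $f$ is $p$-truncated iff $s_{p+1}\iperp f$, i.e. $\intph{s_{p+1}}{f}$ is invertible), together with the exponential law of Proposition~\ref{appendixpp-adj2} and the absorption properties of the pushout product and pullback hom. The key bookkeeping identity will be $s_{p+1}=s_0\pp s_p$, iterated to give $s_{a+b+2}=s_{a+1}\pp s_{b+1}$ for $a,b\ge -1$ (and more generally $s_{k}\pp s_{j+1}$-type rearrangements), all of which follow formally from the associativity and symmetry of $\pp$ on $\cE^\to$ recalled just before Proposition~\ref{appendixpp-adj2}.

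First I would prove (1). By Lemma~\ref{caractntruncated} we must show $\intph{s_{p-k}}{\intph{s_k}{f}}$ is invertible. By Proposition~\ref{appendixpp-adj2} this equals $\intph{s_{p-k}\pp s_k}{f}$, and since $s_{p-k}\pp s_k = s_{(p-k-1)+(k-1)+2}=s_{p}$ wait — more carefully, using $s_0\pp s_n=s_{n+1}$ repeatedly one gets $s_{a+1}\pp s_{b+1}=s_{a+b+2}$ for $a,b\ge-1$; applied with the indices at hand this rewrites $\intph{s_{p-k}\pp s_k}{f}$ as $\intph{s_{p+1}}{f}$, which is invertible because $f$ is $p$-truncated. (One should handle the edge cases where an index drops below $-1$ by noting $\intph{v}{f}$ is invertible as soon as $v$ is, and $s_{-1}=\{\ini\to\term\}$ behaves as a unit-like object for these purposes.) Next, (2) is the Eckmann--Hilton dual: $u\pp s_k$ is $(m+k+1)$-connected iff it is left orthogonal to every $q$-truncated $f$, i.e. iff $\map{u\pp s_k}{f}$ is invertible for all such $f$; by the exponential law $\map{u\pp s_k}{f}=\map{u}{\intph{s_k}{f}}$, and by (1) the map $\intph{s_k}{f}$ is $(q-k-1)$-truncated, so $u$ (being $m$-connected) is left orthogonal to it precisely when $m \le (q-k-1)-1$, wait — rather, $u$ is left orthogonal to \emph{all} $(p-k-1)$-truncated maps once $p-k-1\ge m$ is not the point; the clean statement is that $u\pp s_k$ being $m'$-connected means orthogonal to all $m'$-truncated maps, and running the adjunction shows this holds with $m'=m+k+1$. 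I would phrase (2) directly: for any truncated $f$, $\map{u\pp s_k}{f}=\map{u}{\intph{s_k}{f}}$ and invoke (1) plus the definition of $m$-connectedness.

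For (3), I would argue as in (1): $\intph{u}{f}$ is $(p-m-2)$-truncated iff $\intph{s_{p-m-1}}{\intph{u}{f}}$ is invertible, which by Proposition~\ref{appendixpp-adj2} equals $\intph{s_{p-m-1}\pp u}{f}=\intph{u\pp s_{p-m-1}}{f}$; by (2) the map $u\pp s_{p-m-1}$ is $(m+(p-m-1)+1)=p$-connected, hence left orthogonal to the $p$-truncated map $f$, so $\intph{u\pp s_{p-m-1}}{f}$ is invertible by Remark~\ref{rem:intpbh-detects-int-orth} (using that internal orthogonality is detected by $\intph{-}{-}$; to pass from external to internal one tensors with a generator and uses Remark~\ref{internalorth}, exactly as in the proof of Proposition~\ref{pres-fact2}). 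Finally (4): $u\pp v$ is $(m+n+2)$-connected iff for every truncated $f$, $\map{u\pp v}{f}$ is invertible with the right numerology; by the exponential law $\map{u\pp v}{f}=\map{u}{\intph{v}{f}}$, and by (3) $\intph{v}{f}$ is $(p-n-2)$-truncated, so $u$ being $m$-connected gives that $u\pp v$ is left orthogonal to every $p$-truncated map once $p \ge m+n+2$ — unwinding, $u\pp v$ is $(m+n+2)$-connected.

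The statements are genuinely a chain of four mutually-supporting claims, so the main obstacle is not any one deep step but rather (a) fixing a consistent convention for the sphere indices and checking the identity $s_{a+1}\pp s_{b+1}=s_{a+b+2}$ once and for all, including the degenerate cases with $\ini$; and (b) being careful about the external-vs-internal distinction in (2) and (4): $m$-connectedness of $u$ is defined via \emph{external} left orthogonality $u\perp f$, whereas the exponential law most naturally produces statements about $\map{u}{-}$, so one must either work with $\map{-}{-}$ throughout (which suffices, since $\perp$ is detected by $\map{-}{-}$ being cartesian) or systematically tensor with generators. I expect (b) to be the only place where care is really needed; everything else is the formal calculus of $\pp$ and $\intph{-}{-}$ already set up in Subsection~\ref{sec:map-ops}.
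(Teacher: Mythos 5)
Your argument is essentially identical to the paper's: both prove the four statements as a chain $(1)\Rightarrow(2)\Rightarrow(3)\Rightarrow(4)$, each step combining Lemma~\ref{caractntruncated} with the adjunction $\intph{u \pp v}{f} = \intph{u}{\intph{v}{f}}$ of Proposition~\ref{appendixpp-adj2}, and the internal-versus-external orthogonality worry you raise is already absorbed by Proposition~\ref{pres-fact2}, which gives $C_n(\cE)={}^{\iperp}T_n(\cE)$ for the generated factorization system. The one slip is your closed form of the join identity: it should read $s_k \pp s_\ell = s_{k+\ell+1}$ (equivalently $s_{a+1}\pp s_{b+1}=s_{a+b+3}$, not $s_{a+b+2}$); your subsequent application $s_{p-k}\pp s_k = s_{p+1}$ is nevertheless the correct one.
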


\begin{proof}
We use Proposition \ref{n-trunc-connectedfactsystem} and the adjunction formula in Proposition \ref{appendixpp-adj2}.
\begin{enumerate}  
\item
  By elementary properties of the join $s_k \pp s_\ell = s_{k+\ell+1}$ for all $k,\ell\ge -1$. Now, by Lemma~\ref{caractntruncated} $f$ is $p$-truncated if and only if
  \[s_{p+1}\iperp f \iff (s_{p-k} \pp s_k)\iperp f \iff
  s_{p-k} \iperp \intph{s_k}{f} \]
 Again by Lemma~\ref{caractntruncated} this is equivalent to the fact that $\intph{s_k}{f}$ is $(p-k-1)$-truncated.
  \item
Let $h$ be any $(m+k+1)$-truncated map.  Then
   \[ (u \pp s_k) \iperp h \iff u \iperp \intph{s_k}{h}. \]
  But $\intph{s_k}{h}$ is $(m+k+1)-k-1=m$-truncated by 1.
  \item
We have 
  $$s_{p-m-1} \iperp \intph{u}{f} \iff (s_{p-m-1} \pp u) \iperp f$$
  and $s_{p-m-1} \pp u$ is $(p-m-1)+m + 1 = p$-connected by 2.
  \item
Let $h$ be any $(m+n+2)$-truncated map, then
  \[
  (u \pp v) \iperp h \iff u \iperp \intph{v}{h}. 
 \]
  But the map $\intph{v}{h}$ is $(m + n + 2) - n - 2 = m$-truncated by 3.
\end{enumerate}
\end{proof}

\begin{prop}\label{prop:delta-conn}
For $n\geq -1$, a map in a topos $\cE$ is $n$-connected if and only if it is a cover and its diagonal is $(n-1)$-connected.
\end{prop}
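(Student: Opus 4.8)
The plan is to prove the two implications of the equivalence, organising both around a single computation with the internal pullback-hom.

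\emph{The easy half of ``only if''.} If $f\in C_n(\cE)$ with $n\ge -1$, then $f$ is a cover. Since $C_n(\cE)$ consists of the maps left orthogonal to all $n$-truncated maps, it is enough to know that $T_{-1}(\cE)\subseteq T_n(\cE)$, for then a member of $C_n(\cE)$ is in particular left orthogonal to every monomorphism, i.e.\ a cover. But truncation levels only increase: if $h$ is $(-1)$-truncated then $\intph{s_0}{h}$ is invertible (Lemma~\ref{caractntruncated}), and writing $s_{n+1}=s_n\pp s_0$ and using Proposition~\ref{appendixpp-adj2} together with the absorption property of the pullback-hom gives that $\intph{s_{n+1}}{h}=\intph{s_n}{\intph{s_0}{h}}$ is invertible, i.e.\ $h\in T_n(\cE)$.

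\emph{The key computation.} For any map $h$ of $\cE$, Proposition~\ref{appendixpp-adj2}, the symmetry of the pushout product, and the identity $\intph{s_0}{-}=\Delta(-)$ of Example~\ref{exam:diagonalsaspbhoms} give
\[
  \Delta\!\left(\intph{f}{h}\right)=\intph{s_0}{\intph{f}{h}}=\intph{s_0\pp f}{h}=\intph{f\pp s_0}{h}=\intph{f}{\intph{s_0}{h}}=\intph{f}{\Delta h}.
\]
Since a map of $\cE$ is a monomorphism exactly when its diagonal is invertible (Definition~\ref{def:mono} and the discussion in Subsection~\ref{sec:conn-trun}), and $\intph{f}{h}$ is invertible exactly when $f\iperp h$ (Remark~\ref{rem:intpbh-detects-int-orth}), we conclude that $\intph{f}{h}$ is a monomorphism if and only if $f\iperp\Delta h$. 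As a map in a topos is invertible precisely when it is both a cover and a monomorphism (a map left orthogonal to itself is invertible, and covers are left orthogonal to monomorphisms), this yields the dichotomy
\[
  f\iperp h \quad\Longleftrightarrow\quad \intph{f}{h}\text{ is a cover}\ \ \text{and}\ \ f\iperp\Delta h .
\]

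\emph{Assembling the proof, and the main obstacle.} Applying the dichotomy to every $h\in T_n(\cE)$ (for which $\Delta h\in T_{n-1}(\cE)$), we see that $f\in C_n(\cE)$ if and only if $\intph{f}{h}$ is a cover for all $h\in T_n(\cE)$ and $f\iperp\Delta h$ for all $h\in T_n(\cE)$. The statement to be proved replaces the first clause by ``$f$ is a cover'' and the second by ``$\Delta f\in C_{n-1}(\cE)$'', and the heart of the argument is to justify this replacement. This is not a formal consequence of the monoidal calculus of Subsection~\ref{sec:map-ops}: that calculus can move a factor out of a pushout product but cannot move the diagonal $\intph{s_0}{f}$ to the left of an orthogonality, and neither clause matches its replacement term by term---already for $n=1$, the map $f\colon S^1\to\term$ is a cover while $\intph{f}{f}$ is not. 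The remedy is descent. One argues by induction on $n$, the case $n=-1$ being vacuous, and for a cover $h$ one writes its codomain as the colimit of the \v{C}ech nerve of $h$ (Remark~\ref{rem:coverage}), whose terms are iterated fibre products assembled from $h$ and its diagonal $\Delta h$; combining the descent principle with the fibre-join description of the pushout product (Example~\ref{example-po}(\ref{fib-join})) then lets one decompose a lifting problem for $f$ against an arbitrary $n$-truncated map into a lifting problem against a monomorphism (governed by ``$f$ is a cover'') and one against $\Delta h$ (governed by ``$\Delta f\in C_{n-1}(\cE)$''), after which the inductive hypothesis feeds back into the dichotomy above. I expect the main obstacle to be the base of this induction, the case $n=0$---equivalently, the assertion that $f$ has the (possibly non-unique) left lifting property against every $0$-truncated map if and only if $\Delta f$ is a cover---which is precisely where the residual classical content of the Blakers--Massey theorem sits in this chain of reductions.
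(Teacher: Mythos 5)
There is a genuine gap, and you have in effect flagged it yourself. The first two stages of your argument are correct: monomorphisms are $n$-truncated for every $n\ge -1$, so $n$-connected maps are covers; and the computation $\Delta\bigl(\intph{f}{h}\bigr)=\intph{f}{\Delta h}$ via Proposition~\ref{appendixpp-adj2} yields the clean dichotomy that $f\iperp h$ holds exactly when $\intph{f}{h}$ is a cover and $f\iperp \Delta h$. But this dichotomy quantified over all $h\in T_n(\cE)$ is not the statement to be proved, and the passage from the pair of clauses $\bigl(\text{``}\intph{f}{h}\text{ is a cover for all }h\in T_n(\cE)\text{''},\ \text{``}f\iperp\Delta h\text{ for all }h\in T_n(\cE)\text{''}\bigr)$ to the pair $\bigl(\text{``}f\text{ is a cover''},\ \text{``}\Delta f\in C_{n-1}(\cE)\text{''}\bigr)$ is exactly the content of the proposition --- your own example $f\colon S^1\to\term$ shows the clauses do not match termwise, and nothing in the monoidal calculus of Subsection~\ref{sec:map-ops} converts ``$f$ left orthogonal to diagonals of $n$-truncated maps'' into ``$\Delta f$ left orthogonal to all $(n-1)$-truncated maps''. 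Your final paragraph is a plan rather than a proof: the \v{C}ech-nerve/descent argument is not carried out, the interaction between the induction base $n=-1$ and the announced obstacle at $n=0$ is left unresolved, and you explicitly state that you expect an obstacle you have not overcome. An argument that ends by naming its own missing step is not complete.

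For comparison: the paper does not prove this proposition internally at all; it simply cites \cite[Proposition 6.5.1.18]{LurieHT}. Lurie's proof does proceed by an induction using \v{C}ech nerves and descent of the general kind you gesture at, so your diagnosis of where the difficulty lives is accurate, but to make your write-up a proof you would either need to execute that induction in detail (in particular, proving that $f$ a cover with $\Delta f\in C_{n-1}(\cE)$ implies $f\iperp h$ for $n$-truncated $h$, by expressing the codomain of $f$ as the colimit of the \v{C}ech nerve of $f$ and checking orthogonality stage by stage) or, as the authors do, defer to the reference.
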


\begin{proof}
  See \cite[Proposition 6.5.1.18]{LurieHT}.
\end{proof}

\subsection{Modalities}

We fix for this section a given topos $\cE$.

\begin{defn}
We say that a  factorization system $(\cL,\cR)$ in a topos $\cE$ is a {\em modality} if the class $\cL$ is stable under base change by any map in $\cE$.
\end{defn}

The right class of a factorization system is always closed under base
change by Lemma~\ref{lemma-stabilitybylimits}. Hence in a modality,
\emph{both} classes $\cL$ and $\cR$ are stable by base change.

\begin{exmp} The are many examples of modalities.
\begin{enumerate}
  \item
The factorization system of covers and monomorphisms in a topos $\cE$ is a modality. 
  \item
More generally, the factorization system $(C_n(\cE),T_n(\cE))$ of $n$-connected maps and $n$-truncated maps in a topos $\cE$ is a modality. It is a factorization system by
Proposition ~\ref{n-trunc-connectedfactsystem}. It only remains to check that $n$-connected maps are stable under base change by any map which we will leave to the reader.
  \item
If $(\cL,\cR)$ is a modality in a topos $\cE$, then for every object
$T \in \cE$ the induced factorization system $({\cL}_T,{\cR}_T)$ on
$\cE_{/T}$ described in Lemma~\ref{slicingfactsystem} is also a
modality.
\end{enumerate}
\end{exmp} 

\begin{exmp}
  Let $A \in \cS$ be a space.  A space $X$ is \emph{$A$-null} if the
  diagonal map
  $$ \Delta_A(X): X \to \map{A}{X}, $$
  defined in Example~\ref{exam:diagonalsaspbhoms}, is an equivalence,
  that is, if every function $A \to X$ is uniquely homotopic to a
  constant map.  Equivalently, a space $X$ is $A$-null if and only if
  $(A\to\term)\perp (X\to\term)$.  If $\cA$ is a set of spaces, a
  space $X$ is \emph{$\cA$-null} if it is $A$-null for every object
  $A\in \cA$.  If $S_{\cA}$ is the set of maps $A\to \term$ with
  $A\in \cA$, then $S_{\cA}^\perp=\cR_{\cA}$ is the right class of a
  factorization system $(\cL_{\cA},\cR_{\cA})$ by Proposition
  \ref{pres-fact}. Moreover, it can be shown that this factorization
  system is in fact a modality.  Indeed, the modality
  $(C_n(\cE),T_n(\cE))$ of the previous example is obtained from this
  construction by setting $A = S^{n+1}$.

  For a given space $X$, factoring the terminal map $X \to \term$
  produces an object $P_{\cA} X$ called the \emph{$\cA$-nullification}
  of $X$ which is initial among all $\cA$-null spaces admitting a map
  from $X$.  More generally, the stability of factorizations by
  pullback in a modality implies that the factorization of a map
  $f : X \to Y$ may be seen as a fiberwise application of the
  $\cA$-nullification functor.  Classical accounts of this
  construction may be found in \cite{may1980fibrewise},
  \cite{farjoun1995cellular} and \cite{chataur2006fiberwise}.

  The nullification functor $P_{\cA}$ is the source of the weak cellular
  inequalities of \cite{farjoun1995cellular} which are the main tool
  in the generalization of the Blakers-Massey theorem of
  \cite{ChachSchererWerndli}: a space $A$ \emph{kills}
  $X$, written $X > A$, if $P_A X = \term$.
\end{exmp}

The next class of examples of modalities is important for applications
to Goodwillie calculus in the companion paper~\cite{GBM2}.

\begin{exmp} \label{leftexactmodality}
  If $\cE$ is a topos and $L : \cE \to \cE$ is a left exact
  localization with unit $\eta : \id_{\cE} \to L$, then we obtain a
  modality $(\cL,\cR)$ on $\cE$ by taking $\cL$ to be the class of maps which are
  sent to isomorphisms by $L$.  The factorization of a map
  $f : X \to Y$ may be obtained by considering the pullback diagram
  \[
    \begin{tikzcd}
      X \ar[r, "\cL f"] \ar[dr, "f"'] & \| f \|  \ar[r, "\eta_X"] \ar[d, "\cR f"] \pbmark & LX \ar[d, "Lf"] \\
      & Y \ar[r, "\eta_Y"'] & LY
    \end{tikzcd}
  \]
The required properties of a modality follow easily from the hypothesis on $L$. 
\end{exmp}

\subsection{Dual Blakers-Massey theorem}
\label{sec:Dual-Blakers-Massey}

As a first application of the concept of modality, we derive the following
``dual'' Blakers-Massey theorem, which is in fact an elementary consequence
of the definition.

\begin{thm}[Dual Blakers-Massey]
  \label{prop:dbm}
  Let $(\cL, \cR)$ be a modality.  Suppose we are given a pullback square
  \[
    \begin{tikzcd}
      X \ar[r, "h"] \ar[d, "f"'] \pbmark & Z \ar[d, "g"] \\
      Y \ar[r, "k"'] & W
    \end{tikzcd}
  \]
and suppose that the map $k \pp g \in \cL$.  Then the cogap map $\cogap{k}{g} : Y \sqcup_X Z \to W$ is in $\cL$.
\end{thm}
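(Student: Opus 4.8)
The plan is to reduce the Dual Blakers-Massey statement to an application of the descent lemma (Lemma~\ref{lemmadescent}), using the cube associated to a commutative square in $\cE^\to$. First I would set up a cube of the following shape. We want a pushout square in $\cE^\to$ whose corners encode the given data. Starting from the pullback square relating $X,Y,Z,W$, I would form the cube whose bottom face is the constant-over-$W$ square (i.e.\ $W \xleftarrow{k} Y$, $W \xleftarrow{g} Z$, $W = W$, with the pushout corner $Y\sqcup_X Z$ mapping to $W$ via $\cogap{k}{g}$), and whose top face is the original pullback square $X \to Y$, $X\to Z$, $X = X$-ish. More precisely: one chooses the vertical maps of the cube to be $f\colon X\to Y$, $g'\colon Z\to W$ along one pair of parallel edges and $h\colon X\to Z$, $k\colon Y\to W$ along the other, so that the front/back faces are exactly the given pullback square, and the top and bottom faces are pushouts (the top being the pushout defining $Y\sqcup_X Z$ as a cocone under $Y\leftarrow X\to Z$, the bottom being trivially a pushout).

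The point of the cube is this: the square of arrows in $\cE^\to$ has the \emph{horizontal} arrows being the cocartesian gap data and the \emph{vertical} arrows being the maps $k$, $g$ (or rather the two faces of the pullback square). By the hypothesis that $k\pp g\in\cL$, and since $\cL$ is stable under base change (a modality), and since the map $u\pp v$ living over the codomain product can be base-changed along the diagonal to recover $\ppz$-products (Example~\ref{example-po}(5)), I expect that the relevant edges of the cube lie in $\cL$ after the appropriate base change; then Lemma~\ref{lemmadescent} (or its dual read-off in terms of the cube) lets me conclude that the remaining edge — which is $\cogap{k}{g}$ — is a cobase change of something in $\cL$, hence is in $\cL$ since $\cL$ is closed under cobase change (Lemma~\ref{lemma-stabilitybylimits}(4)).

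Concretely, the key steps in order are: \emph{(i)} unwind the pushout product $k\pp g$ as the cogap map of the square $Y\times Z \leftarrow X'\to \ldots$ — actually, recognize that the canonical map $Y\sqcup_X Z \to Y\times_W Z$ fits into the picture, and that $k\pp g$ (suitably interpreted over $W$) \emph{is} the comparison map whose being in $\cL$ we are handed; \emph{(ii)} build the cube so that two opposite faces are the given pullback square and the top/bottom faces are pushouts; \emph{(iii)} check that the two faces which must be pullbacks for Lemma~\ref{lemmadescent} to apply are indeed pullbacks — one trivially, the other being the given pullback square (possibly after observing a base-change statement); \emph{(iv)} apply descent to conclude the map $\cogap{k}{g}$ is cartesian-related to $k\pp g$ in the right way; \emph{(v)} invoke stability of $\cL$ under cobase change to finish.

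\textbf{The main obstacle} I anticipate is step (i)–(ii): correctly identifying \emph{which} cube to use so that the hypothesis ``$k\pp g\in\cL$'' matches an edge, and so that the two faces needed as pullbacks by Lemma~\ref{lemmadescent} are available. The pushout product $k\pp g$ naturally lives over $Y\times Z$ (or over $W$ after the diagonal base change), whereas $\cogap{k}{g}$ lives over $W$; reconciling these, and making sure the cube's pushout faces are genuine pushouts (the top one being the defining pushout $Y\sqcup_X Z$, which requires the original square to be a pullback precisely so that $X$ is the pullback $Y\times_W Z$ and the iterated pushout computes correctly), is the delicate bookkeeping. Once the cube is correctly assembled, the descent lemma does essentially all the work, which is why the paper can call this ``an elementary consequence of the definition.''
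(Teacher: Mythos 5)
Your proposal circles the right idea --- Example~\ref{example-po}(5), i.e.\ that the square defining $k \pp g$ over $W \times W$ pulls back along the diagonal $\Delta : W \to W \times W$ to recover the given square --- but the logical conclusion you draw from it is the wrong one, and as written the argument does not close. The paper's proof is a single application of universality of colimits: pulling back the defining pushout square of $k \pp g$ along $\Delta_W$ yields (using that $X = Y \times_W Z$) the pushout $Y \sqcup_X Z$ over $W$, so that $\cogap{k}{g}$ is exhibited as a \emph{base change} of $k \pp g$ along $\Delta_W$; one then concludes because $\cL$ is stable under base change, which is precisely the modality axiom. Your step (v), and the sentence ``the remaining edge \ldots is a cobase change of something in $\cL$, hence is in $\cL$ since $\cL$ is closed under cobase change,'' invokes the wrong closure property. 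This is not a cosmetic slip: closure of $\cL$ under cobase change holds for the left class of \emph{every} factorization system (Lemma~\ref{lemma-stabilitybylimits}(4)), so if your chain of reasoning were valid it would prove the Dual Blakers--Massey theorem without the modality hypothesis, which is false. The theorem genuinely needs $\cL$ to be stable under base change, and that is the step your write-up never actually performs.

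Two further points. First, your cube is never coherently specified --- you list only five of the eight vertices and assign $h : X \to Z$ and $f : X \to Y$ as ``vertical'' edges sharing the source $X$, which cannot both be vertical edges of a cube; the paper needs no cube at all, only one pushout square and one base change. Second, Lemma~\ref{lemmadescent} is not the relevant tool: it concludes that certain squares are \emph{cartesian}, and says nothing about membership in $\cL$. (One \emph{can} recast the identification of $\cogap{k}{g}$ as the pullback of $k \pp g$ in descent language, by building the cube whose bottom face is the square $Y\times Z \to W\times Z$, $Y\times Z \to Y \times W$ with pushout corner, and whose top face is $Y \leftarrow X \to Z$ with pushout $Y\sqcup_X Z$, the vertical maps being the graph inclusions; but this is just universality of colimits in disguise, and even then the proof must end with stability of $\cL$ under base change, not cobase change.)
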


\begin{proof}
  The pushout product $k \pp g$ is, by definition, the cogap map of the
  following square:
  \[
    \begin{tikzcd}
      Y \times Z \ar[d] \ar[r] & W \times Z \ar[d] \\
      Y \times W \ar[r] & W \times W
    \end{tikzcd}
  \]
  and one can easily check that by pulling back this square
  along the diagonal map $W \to W \times W$, we obtain our original
  square.  It follows, then, by universality of colimits that the
  pullback of the map $k \pp g$ is in fact the map $\cogap{k}{g}$.
  Since we have $k \pp g \in \cL$ by assumption, and $\cL$ is stable
  by base change in light of the fact that it is the left class of
  a modality, we are done.
\end{proof}

\begin{cor}[{\cite[Theorem 2.4]{G92}}]
  Suppose we are given a pullback square 
  \[
    \begin{tikzcd}
      X \ar[r, "h"] \ar[d, "f"'] \pbmark & Z \ar[d, "g"] \\
      Y \ar[r, "k"'] & W
    \end{tikzcd}
    \]
in $\cS$, where $g$ is $m$-connected and $k$ is $n$-connected, then the cogap map $\cogap{k}{g} : Y \sqcup_X Z \to W$ is $(m+n+2)$-connected.
\end{cor}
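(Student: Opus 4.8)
The plan is to deduce this corollary as an immediate instance of the Dual Blakers-Massey theorem (Theorem~\ref{prop:dbm}), applied to the modality of $(m+n+2)$-connected and $(m+n+2)$-truncated maps. First I would set $p = m+n+2$ and take $(\cL,\cR) = (C_p(\cS), T_p(\cS))$; this is a factorization system by Proposition~\ref{n-trunc-connectedfactsystem}, and it is a modality since $n$-connected maps are stable under base change in any topos (item (2) of the list of examples of modalities). Note that $p \geq -2$ whenever $m,n\geq -2$, so the modality is always defined, and the degenerate cases (when $m$ or $n$ equals $-2$) cause no trouble since every map is $(-2)$-connected.

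Next I would verify the hypothesis of Theorem~\ref{prop:dbm}, namely that $k \pp g \in \cL$. This is exactly Corollary~\ref{cor:raise-lower}(4): since $k$ is $n$-connected and $g$ is $m$-connected, the pushout product $k \pp g$ is $(m+n+2)$-connected, hence lies in $\cL = C_{m+n+2}(\cS)$. (Strictly, Corollary~\ref{cor:raise-lower}(4) is phrased for two maps $u,v$; I simply apply it with $u=k$, $v=g$.)

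Finally, Theorem~\ref{prop:dbm} applied to the given pullback square yields that the cogap map $\cogap{k}{g}: Y \sqcup_X Z \to W$ lies in $\cL$, i.e.\ is $(m+n+2)$-connected, which is the claim. There is essentially no obstacle here: the corollary is a pure specialization of the abstract theorem, and the only thing to keep an eye on is the bookkeeping of the connectivity index, which matches because the modality was chosen precisely so that ``$k\pp g \in \cL$'' becomes the numerical condition ``$k\pp g$ is $(m+n+2)$-connected'' supplied by Corollary~\ref{cor:raise-lower}(4). If one wished, one could also remark that the statement is sharp in $\cS$ via the Freudenthal-type examples, but that is not needed for the proof itself.
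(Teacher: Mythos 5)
Your proposal is correct and follows exactly the paper's own argument: the paper proves this corollary by applying Proposition~\ref{prop:dbm} (Dual Blakers-Massey) together with Corollary~\ref{cor:raise-lower}(4), which is precisely your reduction. You have merely spelled out the choice of modality $(C_{m+n+2}(\cS), T_{m+n+2}(\cS))$ and the index bookkeeping more explicitly, which is fine.
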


\begin{proof}
Just apply Proposition~\ref{prop:dbm} together with Corollary \ref{cor:raise-lower}(4).
\end{proof}

\subsection{Modalities and local classes}
\label{sec:modality-topos}

Recall from Definition~\ref{def:coverage}, that a family of maps  $\{g_i : Y_i \to Y \}_{i\in I}$
in a topos is called a \emph{coverage} if the induced map $\bigsqcup_{i\in I} Y_i\to Y$
is a cover.

\begin{defn}[{\cite[Prop. 6.2.3.14]{LurieHT}}] \label{prop:epi-local}
Let $\cM$ be a class of maps in a topos $\cE$ stable by base change. We will say that $\cM$ is {\it local} 
if for every coverage $\{g_i : Y_i \to Y \}_{i\in I}$ and every map $f:X\to Y$,
\begin{equation*}
    \begin{tikzcd}
      Y_i\times_Y X \ar[d, "g_i^*(f)"'] \ar[r] \pbmark &  \ar[d, "f"] X \\
      Y_i \ar[r, "g_i"'] & Y
    \end{tikzcd}
  \end{equation*}
if $g_i^*(f)\in \cM$ for all $i\in I$, then $f\in \cM$.
\end{defn}

\begin{rem}
  \label{rem:lcl-coverage}
  It is not hard to see that the above definition can be reformulated
  as follows: a class of maps $\cM$ is local if and only if
  it is closed under coproducts and 
   for any pullback square
  \begin{equation*} 
    \begin{tikzcd}
      X' \ar[d, "f'"'] \ar[r] \pbmark & X \ar[d, "f"] \\
      Y' \ar[r, "g"', two heads] & Y
    \end{tikzcd}
  \end{equation*}
with $g$ a cover, $f' \in \cM$ implies $f \in \cM$.
\end{rem}
  
\begin{rem}\label{localclassesinSpaces} 
If $\cM$ is a local class in the category
of spaces $\cS$, then a map $f:X\to Y$
belongs to $\cM$ if and only if
all its fibers $f^{-1}(y)$ (the maps $f^{-1}(y)\to 1$) belong to  $\cM$.
This is because the set of all maps $\term\to Y$
is a coverage of the space $Y$.
\end{rem}

\begin{rem}
Let $\cA$ be a class of objects in the category of spaces $\cS$ that is closed under isomorphisms (aka. weak homotopy equivalences). 
It is easy to verify that the class of maps having all their fibers in $\cA$ is a local class. 
Conversely, every local class $\cM$ in the topos $\cS$
is of this form for some class of objects $\cA$.
\end{rem}

\begin{prop}\label{lemma-localclass}
The classes $\cL$ and $\cR$ of a modality in a topos $\cE$ are local.
\end{prop}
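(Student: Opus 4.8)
The plan is to show that each class $\cL$ and $\cR$ of a modality satisfies the reformulation of locality given in Remark~\ref{rem:lcl-coverage}: closure under coproducts, plus the descent-along-covers property. Since $\cR$ is the right class of a factorization system, it is closed under base change and under arbitrary limits in $\cC^\to$ (Lemma~\ref{lemma-stabilitybylimits}(4),(5)), so in particular closed under coproducts; and $\cL$, being stable by base change in a modality and closed under colimits in $\cC^\to$ (Lemma~\ref{lemma-stabilitybylimits}(5)), is closed under coproducts as well. So the content is the cover-descent property. The key tool will be the descent principle of the topos, applied to the \v{C}ech-style resolution of a cover.

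First I would treat $\cL$. Given a pullback square with $g : Y' \to Y$ a cover and $f' = g^*(f) \in \cL$, I want $f \in \cL$. The idea is that a cover $g$ is an effective epimorphism, so $Y$ is the colimit of the \v{C}ech nerve of $g$, i.e. the simplicial object with $n$-th term $Y'^{\times_Y(n+1)}$. Pulling $f$ back along this entire simplicial diagram, universality of colimits gives that $X$ is the colimit of the pulled-back simplicial object over $X$, and the map $f$ is the colimit (in $\cE^\to$) of the maps $f_n := $ pullback of $f$ along $Y'^{\times_Y(n+1)} \to Y$. Each $f_n$ is a base change of $f_0 = f'$ along a map $Y'^{\times_Y(n+1)} \to Y'$, hence lies in $\cL$ because $\cL$ is stable by base change. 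Moreover all the naturality squares in this simplicial object are cartesian (again by universality of colimits / the fact that the \v{C}ech nerve's face and degeneracy squares are cartesian after pullback along $f$). Thus $f$ is a colimit of a diagram valued in $\cL^\to$, which is closed under colimits in $\cE^\to$ by Lemma~\ref{lemma-stabilitybylimits}(5); hence $f \in \cL$. One must check that the colimit computed in $\cL^\to$ agrees with the one in $\cE^\to$, but that is exactly what closure under colimits means, and the descent principle guarantees the relevant squares remain cartesian so that the colimit arrow really is $f$.

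For $\cR$, I would argue dually, but here base change is the operative closure property rather than colimit-closure. Again resolve $Y$ as the colimit of the \v{C}ech nerve of the cover $g$ and pull $f$ back; each $f_n \in \cR$ since $f_0 = f' \in \cR$ and $\cR$ is stable by base change. Now I want to conclude $f \in \cR$ from the fact that $f$ is the ``colimit'' of the $f_n$. Since $\cR = \cL^\perp$, it suffices to show $u \perp f$ for every $u \in \cL$; by descent, a lifting problem against $f$ over the colimit $Y = \colim Y_n$ decomposes into compatible lifting problems against the $f_n$, each of which has a contractible space of solutions since $u \perp f_n$, and one assembles these (using that the indexing category of the \v{C}ech nerve has contractible nerve, or more cleanly that $\map{-}{f}$ sends the colimit to the limit) to get a contractible space of lifts for $f$. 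Alternatively, and perhaps more simply, $\cR^\to$ is closed under \emph{limits} in $\cE^\to$, and one can present $f$ as such a limit via the cosimplicial diagram $\map{Y_\bullet}{-}$; but the cleanest route is the orthogonality argument just sketched.

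The main obstacle is the bookkeeping in the first step: verifying that after pulling $f$ back along the \v{C}ech nerve of $g$ one genuinely gets a diagram in $\cL^\to$ whose colimit in $\cE^\to$ is $f$ — i.e. that the naturality squares are cartesian and that the induced map on colimits is the original $f$. This is precisely where universality of colimits and the descent principle are both used, and it is the technical heart; the closure properties of $\cL$ and $\cR$ from Lemma~\ref{lemma-stabilitybylimits} then finish things off almost formally. (In fact this is essentially \cite[Prop. 6.2.3.14]{LurieHT}, to which Definition~\ref{prop:epi-local} already alludes, so one could also simply invoke that characterization and only check stability by base change, which is immediate.)
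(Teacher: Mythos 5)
Your treatment of $\cL$ is a workable route, though genuinely different from the paper's: resolving $Y$ by the \v{C}ech nerve of the cover, pulling $f$ back over the whole simplicial object, and combining stability of $\cL$ under base change with closure of $\cL^\to$ under colimits (Lemma~\ref{lemma-stabilitybylimits}(5)) does establish the cover-descent property for $\cL$. Be aware, though, that this imports the nontrivial fact that a cover $g:Y'\to Y$ satisfies $Y=\colim_n Y'^{\times_Y (n+1)}$; the paper never proves this, and its own argument uses only the conservativity characterization of coverages from Remark~\ref{rem:coverage}(4). The paper's proof is shorter and treats both classes uniformly: factor $f=pu$ with $u\in\cL$ and $p\in\cR$; since \emph{both} classes of a modality are stable under base change, $g_i^*(f)=g_i^*(p)\,g_i^*(u)$ is again an $(\cL,\cR)$-factorization; uniqueness of factorizations then forces $g_i^*(u)$ (resp.\ $g_i^*(p)$) to be invertible when $g_i^*(f)\in\cR$ (resp.\ $\in\cL$), and collective conservativity of the $g_i^*$ concludes.

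The half of your argument concerning $\cR$ has a genuine gap. A lifting problem from $u:A\to B$ to $f:X\to Y$ has bottom map $B\to Y$, which need not factor through any stage $Y_n\to Y$ of the \v{C}ech nerve, so the problem does \emph{not} decompose into compatible lifting problems against the $f_n$. The slogan ``$\map{-}{f}$ sends colimits to limits'' applies when the \emph{source} of the lifting problem is resolved as a colimit, not the target; here it is $f$ that has been resolved, and $\map{B}{\colim Y_n}$ is not a limit of the $\map{B}{Y_n}$. Nor is $f$ a limit of the $f_n$ in $\cE^\to$ in any evident way, so closure of $\cR^\to$ under limits cannot be invoked. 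Indeed, no argument of the form ``$f$ is a colimit of maps in $\cR$, hence in $\cR$'' can succeed, since $\cR^\to$ is reflective rather than closed under colimits (a pushout of truncated maps need not be truncated). A smaller slip in the same spirit: closure of $\cR^\to$ under limits yields closure under products, not coproducts, so the coproduct-closure required by Remark~\ref{rem:lcl-coverage} also needs a separate argument for $\cR$. The repair is the factorization-plus-conservativity argument sketched above, which is exactly what the paper does.
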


\begin{proof}
Let us show that $\cR$ is local.
Given $f: X \to Y$ and a coverage $\{g_i : Y_i \to Y \}_{i\in I}$ such that $g_i^*(f)\in \cR$ for all $i\in I$, we need to show that $f\in \cR$.

For this, choose a factorization 
$f=pu:X\to Z\to Y$ with $p\in \cR$ and $u\in \cL$.
We will prove that $u$ is invertible, and hence
that $f\in \cR$. The base change functors
$g_i^*:\cE_{/Y}\to \cE_{/Y_i}$ are collectively
conservative, since the family $\{g_i\ | \ i\in I\}$
is a coverage. Hence it suffices to show
that the map $g_i^*(u)$ is invertible for every $i\in I$.
We have $g_i^*(u)\in \cL$ and $g_i^*(p)\in \cR$, 
since the classes $\cR$ and $\cL$ are closed under base changes.
Thus, $g_i^*(u)$ is invertible by uniqueness of a $(\cL,\cR)$-factorization,
since $g_i^*(f)=g_i^*(p)g_i^*(u)$ and $g_i^*(f)\in \cR$.
This proves that $u$ is invertible and hence that $f\in \cR$.
We have proved that $\cR$ is a local class.

A similar argument shows that $\cL$ is a local class.
\end{proof}

\begin{cor}\label{lemma-inSpaces} Let $(\cL,\cR)$
be a modality in the category of spaces $\cS$.
Then a map $f:X\to Y$ belongs to $\cL$ $($resp. $\cR)$
if and only if the map $f^{-1}(y)\to \term$
belongs to $\cL$ $($resp. $\cR)$ for every $y\in Y$.
\end{cor}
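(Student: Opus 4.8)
The plan is to deduce Corollary~\ref{lemma-inSpaces} directly from Proposition~\ref{lemma-localclass}, using the general fact about local classes in $\cS$ recorded in Remark~\ref{localclassesinSpaces}. Concretely, Proposition~\ref{lemma-localclass} tells us that both $\cL$ and $\cR$ are local classes in the topos $\cS$. By Remark~\ref{localclassesinSpaces}, a local class $\cM$ in $\cS$ has the property that a map $f : X \to Y$ belongs to $\cM$ if and only if all of its fibers $f^{-1}(y) \to \term$ belong to $\cM$, the point being that the family of all maps $\term \to Y$ (one for each $y \in Y$, i.e.\ each point of the space $Y$) is a coverage of $Y$. Applying this first with $\cM = \cL$ and then with $\cM = \cR$ gives exactly the two claimed equivalences.

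First I would recall why the family $\{\, y : \term \to Y \,\}_{y \in Y}$ is a coverage: the induced map $\bigsqcup_{y \in Y} \term \to Y$ is a cover because on $\pi_0$ it is surjective (every path component of $Y$ is hit), and by Remark~\ref{rem:coverage}(2) a map of spaces is a cover precisely when it is surjective on $\pi_0$. Then the forward implication of each equivalence is immediate: $\cL$ and $\cR$ are stable under base change (the right class of any factorization system is, by Lemma~\ref{lemma-stabilitybylimits}(4), and the left class of a \emph{modality} is by definition), so if $f \in \cL$ (resp.\ $f \in \cR$) then every fiber $f^{-1}(y) \to \term$, being a base change of $f$ along $y : \term \to Y$, again lies in $\cL$ (resp.\ $\cR$). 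The reverse implication is the substantive one and is exactly an instance of the defining property of a local class (Definition~\ref{prop:epi-local}) applied to the coverage above: if $g_i^*(f) \in \cM$ for every member $g_i$ of the coverage, then $f \in \cM$.

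Since Proposition~\ref{lemma-localclass} is already proved and Remark~\ref{localclassesinSpaces} already packages the "fibers detect membership" statement for arbitrary local classes in $\cS$, there is essentially no obstacle here: the corollary is a two-line application. The only point requiring a word of care is making sure that "fiber" in the statement of the corollary ($f^{-1}(y) \to \term$) matches the "fiber" appearing in Remark~\ref{localclassesinSpaces}, and that the indexing set of the coverage is the underlying set of points $\pi_0$-representatives of $Y$; both are standard. I would therefore write the proof as: "This is an immediate consequence of Proposition~\ref{lemma-localclass} and Remark~\ref{localclassesinSpaces}, since $\cL$ and $\cR$ are local classes in $\cS$ and the set of maps $\term \to Y$ is a coverage of $Y$." If a slightly more detailed write-up is wanted, I would additionally spell out the forward implication via base-change stability as above, so that the reader sees both directions explicitly rather than only the direction that uses locality.

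\begin{proof}
By Proposition~\ref{lemma-localclass}, the classes $\cL$ and $\cR$ are local in the topos $\cS$. For any space $Y$, the family of maps $\{\, y : \term \to Y \,\}_{y \in Y}$ is a coverage, since the induced map $\bigsqcup_{y\in Y}\term \to Y$ is surjective on $\pi_0$ and hence a cover by Remark~\ref{rem:coverage}(2).

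Now let $f : X \to Y$ and let $\cM$ be either $\cL$ or $\cR$. If $f \in \cM$, then for each $y \in Y$ the map $f^{-1}(y)\to\term$ is the base change $y^*(f)$, and it lies in $\cM$ because both $\cL$ and $\cR$ are stable under base change (for $\cR$ this is Lemma~\ref{lemma-stabilitybylimits}(4), and for $\cL$ it is part of the definition of a modality). Conversely, if $f^{-1}(y)\to\term$ lies in $\cM$ for every $y\in Y$, then since $\cM$ is a local class and $\{\, y : \term \to Y \,\}_{y\in Y}$ is a coverage, Definition~\ref{prop:epi-local} gives $f \in \cM$. This proves both equivalences.
\end{proof}
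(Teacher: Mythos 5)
Your proof is correct and follows exactly the route the paper takes: Corollary~\ref{lemma-inSpaces} is deduced from Proposition~\ref{lemma-localclass} together with Remark~\ref{localclassesinSpaces} (the points of $Y$ forming a coverage), and your write-up merely spells out the details that the paper's one-line proof leaves implicit.
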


\begin{proof} This follows from Proposition~\ref{lemma-localclass} and Remark \ref{localclassesinSpaces}. \end{proof}

\subsection{Descent for $\cL$-cartesian squares}
\label{subsec:decentLcart}

A key tool in the proof of the generalized Blakers-Massey theorem are
descent properties of $\cL$-cartesian squares. The reader may wish
to compare the results of this section with those of \cite{CPS2005}
where similar notions are considered. 
 
\begin{defn}
  \label{def:l-cartesian}
  Let $\cL$ be a local class of maps in a topos $\cE$. We say that a
  commutative square 
  \[
    \begin{tikzcd}
      A' \ar[d, "u"'] \ar[r, "f'"] & B' \ar[d, "v"]  \\
      A \ar[r, "f"'] & B
    \end{tikzcd}
  \]
  is \emph{$\cL$-cartesian} if its cartesian gap map $(u,f'):A'\to A\times_B B'$
  belongs to $\cL$.
  \end{defn}

We will say that a morphism $\alpha:f\to g$
in $\cE^\to$ is {\it $\cL$-cartesian} if the corresponding
square in $\cE$ is $\cL$-cartesian.
By Lemma \ref{lem:l-pullbacks} below, the composite of two
$\cL$-cartesian morphisms is $\cL$-cartesian.

\medskip

\begin{prop}\label{prop:l-descent}
  Let $(\cL, \cR)$ be a modality on a topos $\cE$ and let 
  \[
    \begin{tikzcd}
      f \ar[r, "\alpha"] \ar[d, "\beta"'] & g \ar[d, "\gamma"] \\
      h \ar[r, "\delta"'] & k \pomark
    \end{tikzcd}
  \]
  be a pushout square in $\cE^\to$.  If the squares $\alpha$ and $\beta$ are
  $\cL$-cartesian, then so are the squares $\delta$ and $\gamma$.
\end{prop}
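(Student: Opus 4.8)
The plan is to reduce Proposition~\ref{prop:l-descent} to the ordinary descent principle (Lemma~\ref{lemmadescent}) by replacing each of the four maps $f,g,h,k$ in $\cE^\to$ by the $\cL$-part of its cartesian gap factorization. Concretely, a morphism $\alpha : f \to g$ in $\cE^\to$ is a commutative square, and its gap map factors as $A' \to \|(u,f')\| \to A \times_B B'$, where the first arrow is in $\cL$ and the second in $\cR$. Using functoriality of the $(\cL,\cR)$-factorization (Lemma~\ref{lem:LR-fact}) together with the fact that gap maps are themselves functorial, I would manufacture out of the given pushout square in $\cE^\to$ an auxiliary diagram of \emph{cubes} in $\cE$: for each of $f,g,h,k$ one extracts the ``interpolating'' object sitting between the source and the iterated fiber product, and the hypothesis that $\alpha$ and $\beta$ are $\cL$-cartesian says precisely that two faces of the relevant cube are genuine pullbacks after this replacement.

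The key steps, in order, would be: (1) Establish the omitted Lemma~\ref{lem:l-pullbacks}-type facts — that $\cL$-cartesian morphisms compose and that a morphism is $\cL$-cartesian iff a certain square built from its gap map is a pullback — using that $\cL$ is stable under base change (modality) and local (Proposition~\ref{lemma-localclass}); these let me manipulate $\cL$-cartesianness like honest cartesianness. (2) Rewrite the pushout square $\alpha,\beta,\gamma,\delta$ in $\cE^\to$ as a pushout in $\cE$ of the associated span of cubes, observing (via universality of colimits in $\cE$, part of the topos axioms) that the pushout is computed cornerwise. (3) In the cube attached to $k$, replace the three ``known'' faces using the $\cL$-cartesian hypotheses on $\alpha$ and $\beta$ and the fact that the outer square is a pushout; then invoke the ordinary descent Lemma~\ref{lemmadescent} (Mather's cube) to conclude the remaining faces are pullbacks, which translates back into $\gamma$ and $\delta$ being $\cL$-cartesian. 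Along the way I would use that $\cL$ is stable under pushout as well: since $\cL$ is the left class of a factorization system it is closed under cobase change (Lemma~\ref{lemma-stabilitybylimits}(4)), so gap maps of $\delta$ and $\gamma$, being built as pushouts of gap maps in $\cL$ glued against pullback data, land in $\cL$.

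The main obstacle I anticipate is purely organizational rather than conceptual: keeping track of which of the many faces of the several interlocking cubes are pushouts versus pullbacks, and verifying that the $\cL$-part of the gap factorization is compatible with both the pushout in $\cE^\to$ and the passage to fiber products — i.e.\ that ``factor, then take pushout'' agrees with ``take pushout, then factor'' up to a map that is still in $\cL$. This compatibility is exactly where descent (in its ordinary form) and the base-change/locality stability of $\cL$ both get used, and it is the heart of the argument; once it is in place, the conclusion is a formal consequence of Lemma~\ref{lemmadescent}. A cleaner packaging, which I would pursue if the cube bookkeeping becomes unwieldy, is to work directly in the arrow category $\cE^\to$, noting that $\cE^\to$ is again a presentable category with universal colimits (though not a topos), and to prove a relative version of descent there; but I expect the down-to-earth cube argument, modeled on the proof of Proposition~\ref{prop:mono-pushout}, to be the most transparent route.
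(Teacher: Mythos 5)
Your overall shape --- factor, feed the resulting honest pullback squares into ordinary descent (Lemma~\ref{lemmadescent}), and use locality --- is the same strategy the paper follows, but the one concrete mechanism you offer for the decisive step does not work, and the step you defer as ``organizational'' is in fact the mathematical heart of the proposition. You assert that the gap maps of $\delta$ and $\gamma$ are ``built as pushouts of gap maps in $\cL$ glued against pullback data'' and hence lie in $\cL$ by cobase-change stability (Lemma~\ref{lemma-stabilitybylimits}(4)). Writing the cube with top face $A\to B$, $A\to C$ and bottom face $E\to F$, $E\to G$ (pushouts $D$ and $H$), the gap map of $\beta$ is $A\to E\times_G C$ while the gap map of $\delta$ is $C\to G\times_H D$: different domains, and the codomain $G\times_H D$ is a fiber product over the \emph{new} pushout object $H$, which is not computed as a pushout of $E\times_F B$ and $E\times_G C$ in any way that cobase change can see. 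If it were, the proposition would follow in one line from Lemma~\ref{lemma-stabilitybylimits}(4); the entire content of the statement is precisely that fiber products over the pushout interact well with $\cL$, so this cannot be assumed.

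The paper closes this gap with two devices absent from your outline. First, it pulls back the bottom face of the cube along $k:D\to H$, producing (by universality of colimits) a pushout square in $\cE^\to$ whose target is $1_D$ and whose vertical maps $g'$, $h'$ are exactly the gap maps of $\gamma$ and $\delta$; this converts ``$\gamma$, $\delta$ are $\cL$-cartesian'' into ``$g',h'\in\cL$''. Second, it proves that an $\cL$-cartesian square has cartesian $\cR$-part (Lemma~\ref{lem:l-equifib}) and then establishes your ``factor-then-pushout equals pushout-then-factor'' compatibility as Lemma~\ref{lem:rel-patching}: factoring the vertical maps gives a two-storey cube; the pushout $s$ of the $\cL$-parts is in $\cL$ because $\cL^\to$ is closed under colimits (Lemma~\ref{adjointnessLandR}), and the induced map $t$ is in $\cR$ by ordinary descent combined with the \emph{locality of $\cR$} (Lemma~\ref{lem:lcl-descent}, using that the two legs into a pushout form a coverage) --- so $ts$ really is the $(\cL,\cR)$-factorization of the fourth vertical map, and the lower cube's front and right faces are cartesian by Lemma~\ref{lemmadescent}. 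Your proposal invokes locality only for $\cL$ and only to prove composition/cancellation of $\cL$-cartesian squares; the locality of $\cR$ is what actually identifies the factorization of the pushout, and without it (or the reduction to target $1_D$) the argument does not close. As a minor aside, $\cE^\to=\cE^{\Delta^1}$ is itself a topos, so your parenthetical to the contrary is off, though nothing hinges on it.
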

 
The proof of Proposition \ref{prop:l-descent} will be given after several
preparatory lemmas which establish some basic properties of $\cL$-cartesian
squares. The following lemma connects $\cL$-cartesian squares
to cartesian squares. Recall from Lemma \ref{adjointnessLandR} that the functor
$\cR:\cC^\to\to \cR^\to$ is left adjoint to the inclusion $\cR^\to \subset \cC^\to$.

\begin{lem} \label{lem:propofL-cart}  Let $(\cL,\cR)$ be a modality in a topos $\cE$. 
 If a square $\alpha:f\to g$ is $\cL$-cartesian, 
 \label{lem:l-equifib}
   \[
    \begin{tikzcd}
       A \ar[d, "f"'] \ar[r, "\alpha_1"] \ar[dr, phantom, "\scriptstyle{\alpha}"] & C \ar[d, "g"] \\
       B \ar[r, "\alpha_2"'] & D
    \end{tikzcd}
    \]
  then the square $\cR(\alpha):\cR(f)\to \cR(g)$ is cartesian.
 \end{lem}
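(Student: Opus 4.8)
The plan is to factor both $f$ and $g$ and then show that the bottom square of the resulting three-story diagram is already a genuine pullback. Write $f = \cR(f)\circ\cL(f):A\to\| f \|\to B$ and $g = \cR(g)\circ\cL(g):C\to\| g \|\to D$, and use functoriality of the $(\cL,\cR)$-factorization to obtain the commuting diagram
\[
\begin{tikzcd}
A \ar[d, "\cL(f)"'] \ar[r, "\alpha_1"] & C \ar[d, "\cL(g)"] \\
\| f \| \ar[d, "\cR(f)"'] \ar[r, "\| \alpha \|"] & \| g \| \ar[d, "\cR(g)"] \\
B \ar[r, "\alpha_2"'] & D.
\end{tikzcd}
\]
The claim is equivalent to the assertion that the gap map $q:=(\cR(f),\| \alpha \|):\| f \|\to B\times_D\| g \|$ of the lower square is an isomorphism. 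I would prove this by exhibiting the composite $A\to\| f \|\xrightarrow{\ q\ }B\times_D\| g \|$ as an $(\cL,\cR)$-factorization and separately observing that this composite lies in $\cL$, so that uniqueness of factorizations forces $q$ to be invertible.

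First I would check that the composite $A\to B\times_D\| g \|$ through $\| f \|$ belongs to $\cL$. Its components to $B$ and to $\| g \|$ are $f$ and $\cL(g)\circ\alpha_1$, so it coincides with the composite $A\xrightarrow{(f,\alpha_1)}B\times_D C\xrightarrow{\id_B\times_D\cL(g)}B\times_D\| g \|$. The second of these maps fits into a square with the projections to $C$ and $\| g \|$ which is cartesian, since $(B\times_D\| g \|)\times_{\| g \|}C = B\times_D C$; hence it is a base change of $\cL(g)\in\cL$, and therefore lies in $\cL$ because $\cL$, being the left class of a modality, is stable under base change. The first map $(f,\alpha_1)$ lies in $\cL$ by the hypothesis that $\alpha$ is $\cL$-cartesian. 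So the composite lies in $\cL$ by closure under composition (Lemma~\ref{lemma-stabilitybylimits}).

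Next I would check that $q$ itself lies in $\cR$. Postcomposing $q$ with the projection $B\times_D\| g \|\to B$ recovers $\cR(f)\in\cR$, and that projection is, by construction of the pullback, a base change of $\cR(g)\in\cR$, hence itself in $\cR$ (Lemma~\ref{lemma-stabilitybylimits}). By the right-cancellation property of the right class (Lemma~\ref{lemma-stabilitybylimits}(3)) it follows that $q\in\cR$. Therefore $A\xrightarrow{\cL(f)}\| f \|\xrightarrow{\ q\ }B\times_D\| g \|$ is an $(\cL,\cR)$-factorization of a map which already belongs to $\cL$; comparing with the trivial factorization through the identity and using uniqueness of factorizations, $q$ must be an isomorphism. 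This is exactly the statement that $\cR(\alpha)$ is cartesian.

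The argument is essentially formal once the two base-change identifications are in place, so the only point that needs genuine care — the main potential obstacle — is the diagram chase showing that the composite $A\to B\times_D\| g \|$ taken through $\| f \|$ agrees with the one taken through $B\times_D C$, together with the verification that $(B\times_D\| g \|)\times_{\| g \|}C = B\times_D C$. Everything else reduces to the closure properties of a factorization system (composition, cancellation, stability of $\cR$ under base change) plus the defining stability of $\cL$ under base change in a modality.
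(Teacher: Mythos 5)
Your proof is correct and follows essentially the same route as the paper's: both hinge on decomposing the map $A\to B$ through $B\times_D C$ and $B\times_D\| g \|$, observing that $(f,\alpha_1)\in\cL$ by hypothesis, that $\id_B\times_D\cL(g)$ is a base change of $\cL(g)$ and hence in $\cL$ (the one place the modality hypothesis enters), that the projection $B\times_D\| g \|\to B$ is a base change of $\cR(g)$ and hence in $\cR$, and then invoking uniqueness of $(\cL,\cR)$-factorizations. The only difference is packaging: the paper identifies $t=\cR(f)$ directly from the factorization of $f$, whereas you compare via the gap map $q$ and need the extra right-cancellation step to place $q$ in $\cR$ — both are fine.
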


 \begin{proof}
  Consider the diagram with two pullback squares 
  \[
    \begin{tikzcd}
      A \ar[d, "{(f,\alpha_1)}"'] \ar[dr,"\alpha_1"] &  \\
      B \times_D C \ar[r] \ar[d, "s"'] \ar[dr, phantom, "\scriptstyle{(a)}"] & C  \ar[d, "\cL(g)"] \\
      B \times_D \| g \| \ar[r] \ar[d, "t"'] \ar[dr, phantom, "\scriptstyle{(b)}"] & \| g \|  \ar[d,  "\cR(g)"] \\
      B \ar[r, "\alpha_2"'] & D.
   \end{tikzcd}
  \]
  By construction, $f=ts(f,\alpha_1)$.
  We have $s \in \cL$, since the square $(a)$ is cartesian, 
  and we have $t \in \cR$, since the square $(b)$ is cartesian. 
  Moreover, we have $(f,\alpha_1) \in \cL$, since the square $\alpha$
  is $\cL$-cartesian by hypothesis. Thus, $s(f,\alpha_1)\in \cL$
  and it follows that $s(f,\alpha_1)=\cL(f)$ and $t=\cR(f)$ 
  by uniqueness of the factorization $f=\cR(f)\cL(f)$.
  Thus, $\cR(\alpha)$ is the bottom square $(b)$. This shows that $\cR(\alpha)$
  is cartesian, since the square $(b)$ is cartesian.
   \end{proof}

\begin{lem}\label{lem:l-pullbacks}
Let $(\cL, \cR)$ be a modality on a topos $\cE$.
Then the composite of two $\cL$-cartesian squares is $\cL$-cartesian.
Moreover, if the composite of the squares in the following diagram
is $\cL$-cartesian, the left hand square $(a)$ is $\cL$-cartesian, and the map $f$ is a cover, then the right hand square $(b)$ is $\cL$-cartesian.
  \[
    \begin{tikzcd} \label{diagramof2square}
      A' \ar[d, "u"'] \ar[r, "f'"] \ar[dr, phantom, description, "\scriptstyle{(a)}"] 
      & B' \ar[d, "v"'] \ar[r, "g'"] \ar[dr, phantom, description, "\scriptstyle{(b)}"] & C' \ar[d, "w"] \\
      A \ar[r, two heads, "f"'] & B \ar[r, "g"'] & C
    \end{tikzcd}
  \]
\end{lem}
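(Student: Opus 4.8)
The statement has two parts. For the first part---that the composite of two $\cL$-cartesian squares is $\cL$-cartesian---the plan is to compare the gap maps directly. Given the horizontally pasted diagram with squares $(a)$ and $(b)$, I would form the iterated pullback $A \times_B (B \times_C C') = A \times_C C'$ and observe that the gap map $(u, g'f') : A' \to A \times_C C'$ of the outer square factors as the gap map $(u,f'):A' \to A\times_B B'$ of $(a)$ followed by the base change along $A \to B$ of the gap map $(v,g'):B' \to B\times_C C'$ of $(b)$. Since $\cL$ is the left class of a modality, it is closed under base change (so the second map lies in $\cL$) and under composition (Lemma~\ref{lemma-stabilitybylimits}(2)), whence the composite gap map lies in $\cL$. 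This is the same bookkeeping that shows the composite of two cartesian squares is cartesian, now carried out one level up.

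\textbf{The cancellation part.} For the converse cancellation statement, the hypotheses are that the outer square and the square $(a)$ are $\cL$-cartesian and that $f : A \to B$ is a cover; we must show $(b)$ is $\cL$-cartesian, i.e. that the gap map $(v,g') : B' \to B \times_C C'$ lies in $\cL$. The key idea is to use that $\cL$ is a \emph{local} class (Proposition~\ref{lemma-localclass}), so membership in $\cL$ can be checked after pulling back along a cover. Since $f : A \to B$ is a cover, I would pull the gap map $(v,g')$ of $(b)$ back along $f$. Using that base change preserves pullbacks---hence gap maps---the pullback of $(v,g') : B' \to B\times_C C'$ along $f$ is the gap map of the square obtained by pulling $(b)$ back along $f$, which is precisely the square $(a')$ sitting over $A$ in the pasted diagram $A' \to A\times_B B' \to (A\times_B B')\times_{?} \dots$; concretely it is the gap map $A\times_B B' \to A\times_C C'$ of the right square in the base change of the whole diagram along $f$. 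Now in that base-changed diagram the left square is an honest pullback (the gap map of a square pulled back so that its left edge becomes an identity, or more simply: the composite is $\cL$-cartesian by base change of the outer hypothesis, and the left square becomes cartesian), so by the first (composition/cancellation for $\cL$-cartesian over a pullback) part---or directly by the factorization of gap maps used above---the right base-changed square is $\cL$-cartesian. Hence $f^*\big((v,g')\big) \in \cL$, and since $\cL$ is local and $f$ a cover (Remark~\ref{rem:lcl-coverage}), we conclude $(v,g') \in \cL$, i.e. $(b)$ is $\cL$-cartesian.

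\textbf{Main obstacle.} The routine-but-delicate point is the identification of gap maps under base change: one must check carefully that pulling back a commutative square along a map of its lower-left corner produces a square whose gap map is the base change of the original gap map, and that horizontal pasting of gap maps behaves as claimed (the factorization $(u,g'f') = f^*\big((v,g')\big)\circ (u,f')$ up to canonical isomorphism). These are standard manipulations with iterated pullbacks and the universality of colimits in a topos, but they require drawing the relevant cubes and invoking pasting lemmas for cartesian squares; I would state them as a short computation rather than belabor the diagram chase. Everything else is a direct appeal to closure of $\cL$ under base change and composition, uniqueness of factorizations, and locality of $\cL$.
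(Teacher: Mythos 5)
Your proposal is correct and takes essentially the same route as the paper: both arguments rest on the factorization $(u,g'f')=f^*\bigl((v,g')\bigr)\circ(u,f')$ of the outer gap map, deducing the first part from closure of $\cL$ under base change and composition, and the second from the left-cancellation property of $\cL$ (Lemma~\ref{lemma-stabilitybylimits}(3)) together with locality of $\cL$ along the cover $A\times_C C'\to B\times_C C'$. The only wobble is your parenthetical claim that the left square ``becomes cartesian'' after base change along $f$ --- it does not and is not needed; the step actually required is exactly the cancellation property just cited, which your fallback (``directly by the factorization of gap maps'') already supplies.
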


 \begin{proof} 
  Consider the following diagram with three pullback squares.
  \[
    \begin{tikzcd}[column sep=large, row sep=large]
     A' \ar[d, "{(u,f')}"'] \ar[dr, "f'"] & & \\
      A\times_B B' \ar[d, "{f^\join(v,g')}"'] \ar[r] & 
      B' \ar[d, "{(v,g')}"'] \ar[dr, "g'"] & \\
      A \times_C C' \ar[d] \ar[r]  &
      B \times_C C' \ar[d], \ar[r] & C' \ar[d, "w"] \\
      A \ar[r, two heads, "f"'] & B \ar[r, "g"'] & C
    \end{tikzcd}
  \]
  The cartesian  gap map of the square $(b)$ in the diagram \ref{diagramof2square} is $(v,g')$,
  the cartesian  gap map of the square $(a)$ is $(u,f')$
  and the cartesian  gap map of composite square $(a)+(b)$ is
  the composite $f^\join(v,g')(u,f')$.
  If the maps $(u,f')$ and $(v,g')$ belongs to $\cL$, then
  so is the map $f^\join(v,g')(u,f')$, since the class $\cL$
  is closed under base changes and composition.
  Conversely, if the maps $f^\join(v,g')(u,f')$ and
  $(u,f')$ belongs to $\cL$, then $f^\join(v,g')$
  belongs to $\cL$ by property (3) in Lemma~\ref{lemma-stabilitybylimits}. 
  Thus, $(v,g')\in \cL$ when $f$ is cover, since the
  class $\cL$ is local by Proposition~\ref{lemma-localclass}.
  \end{proof}

\begin{lem}
  \label{lem:lcl-descent}
 Let $\cM$ be a local class of maps in a topos $\cE$. Consider 
 a pushout diagram in $\cE^\to$ and suppose that the squares $\alpha$ and $\beta$ are cartesian.
  \begin{equation*} 
    \begin{tikzcd}
      f \ar[r, "\alpha"] \ar[d, "\beta"'] & g \ar[d, "\gamma"] \\
      h \ar[r, "\delta"'] & k \pomark
    \end{tikzcd}
  \end{equation*}
 Then $f, g, h \in \cM$ implies $k\in \cM $.
\end{lem}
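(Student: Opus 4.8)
The plan is to reduce the statement to the definition of a local class, via the reformulation in Remark~\ref{rem:lcl-coverage}: a class $\cM$ is local iff it is closed under coproducts and, for any pullback square with the bottom map a cover, membership of the pulled-back map implies membership of the original. The square $\gamma : g \to k$ need not itself be cartesian, so we cannot simply quote closure under base change; instead we must cover $k$ appropriately and check the hypothesis fiberwise.

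The key observation is that the pushout $k$ is glued from $g$ and $h$ along $f$. Concretely, writing the square in $\cE^\to$ as a cube in $\cE$, with $k : P \to Q$ the bottom-right edge of the pushout, the codomain $Q$ is the pushout of its cospan and the domain $P$ is the pushout of the corresponding cospan of domains. By Remark~\ref{rem:coverage}(5), the two canonical maps $\mathrm{cod}(g) \to Q$ and $\mathrm{cod}(h) \to Q$ form a coverage of $Q$. First I would pull back $k$ along each of these two maps. Since ordinary descent (the descent principle, via Lemma~\ref{lemmadescent}) applies to the cube in which $\alpha$ and $\beta$ are cartesian and the top/bottom faces are pushouts, we get that the squares $\gamma$ and $\delta$ are \emph{cartesian} in the ordinary sense. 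Hence the pullback of $k$ along $\mathrm{cod}(g) \to Q$ is exactly $g$, and the pullback of $k$ along $\mathrm{cod}(h)\to Q$ is exactly $h$. Both are in $\cM$ by hypothesis.

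Now the family $\{\mathrm{cod}(g) \to Q,\ \mathrm{cod}(h)\to Q\}$ is a coverage, so its induced map from the coproduct $\mathrm{cod}(g)\sqcup\mathrm{cod}(h) \to Q$ is a cover. Pulling $k$ back along this single cover yields, by universality of colimits, the coproduct of the two pullbacks, namely a map in $\cM$ (using that $\cM$ is closed under coproducts, part of being local). Applying the second clause of Remark~\ref{rem:lcl-coverage} to the resulting pullback square over the cover $\mathrm{cod}(g)\sqcup\mathrm{cod}(h)\to Q$ then gives $k \in \cM$, which is what we want.

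The main obstacle is the bookkeeping around the cube: one must carefully identify which faces of the cube associated to the pushout square in $\cE^\to$ are pushouts and which are the cartesian squares $\alpha,\beta$, and then invoke Mather's cube lemma (Lemma~\ref{lemmadescent}) to promote $\gamma$ and $\delta$ from merely commutative to cartesian before one is entitled to say that the restriction of $k$ to each covering piece is $g$ (resp. $h$). Once that identification is made, the rest is a direct application of the two clauses defining a local class, together with the fact (Remark~\ref{rem:coverage}(5)) that canonical maps into a colimit form a coverage.
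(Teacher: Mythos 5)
Your proposal is correct and is essentially the paper's own argument: unfold the square in $\cE^\to$ into a cube, use ordinary descent to conclude that the faces $\gamma$ and $\delta$ are cartesian, observe that the two canonical maps into the pushout of codomains form a coverage by Remark~\ref{rem:coverage}(5), identify the base changes of $k$ along these maps with $g$ and $h$, and conclude by locality of $\cM$. The only cosmetic difference is that you route the last step through the coproduct reformulation of Remark~\ref{rem:lcl-coverage}, where the paper applies Definition~\ref{prop:epi-local} directly to the two-element coverage.
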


\begin{proof} The commutative square above corresponds to a cube in $\cE$:
  \begin{equation}\label{thesquareisreallyacube}
    \begin{tikzcd}[bo column sep=large]
     A \ar[rr] \ar[dd, "f"'] \ar[dr] & & B \ar[dd, "g"', near start] \ar[dr, "\gamma_0"] & \\
      & C \ar[rr, crossing over, "\delta_0", near start] & & D \ar[dd, "k"'] \\
      E \ar[rr] \ar[dr] & & F \ar[dr, "\gamma_1"]  & \\
      & G \ar[rr, crossing over, "\delta_1"'] \ar[from=uu, crossing over, "h"', near start] & & H
    \end{tikzcd}
  \end{equation}
  By descent, the front and right faces of the cube are cartesian, since 
   the back and left faces are cartesian by hypothesis.
   Hence the base changes of $k$ along $\delta_1 : G \to H$ and $\gamma_1 : F \to H$
   belong to $\cM$. But the two maps $\delta_1$ and $\gamma_1$ form
  a coverage of $H$ by item (5) of Remark \ref{rem:coverage}. 
  Thus, $k\in \cM$, since $\cM $ is a local class.
 \end{proof}

\begin{lem}\label{lem:rel-patching}
Let $(\cL,\cR)$ be a modality in a topos $\cE$ and let
  \[
    \begin{tikzcd}
      f \ar[d, "\beta"'] \ar[r, "\alpha"] & g \ar[d, "\gamma"] \\
      h \ar[r, "\delta"'] & k
    \end{tikzcd}
   \]
be a pushout square in $\cE^{\to}$.  
Suppose that the square $\cR(\alpha)$ and $\cR(\beta)$ are cartesian.  
Then so are the squares $\cR(\delta)$ and $\cR(\gamma)$.
\end{lem}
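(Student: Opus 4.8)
The plan is to apply the functorial factorization to the entire pushout square and to exploit the fact that $\cR$ is a reflector. First, since $\cR:\cE^\to\to\cR^\to$ is left adjoint to the inclusion (Lemma~\ref{adjointnessLandR}), it preserves colimits, so applying it to the given square produces a square
\[
  \begin{tikzcd}
    \cR(f) \ar[d, "\cR(\beta)"'] \ar[r, "\cR(\alpha)"] & \cR(g) \ar[d, "\cR(\gamma)"] \\
    \cR(h) \ar[r, "\cR(\delta)"'] & \cR(k) \pomark
  \end{tikzcd}
\]
which is a pushout \emph{in $\cR^\to$}. The subtlety is that colimits in the reflective subcategory $\cR^\to$ need not agree with those computed in $\cE^\to$, so one cannot yet invoke Mather's cube lemma~\ref{lemmadescent}; the heart of the argument is to show that here they do agree.

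To this end, I would let $k'$ denote the pushout of the span $\cR(h)\xleftarrow{\cR(\beta)}\cR(f)\xrightarrow{\cR(\alpha)}\cR(g)$ formed in $\cE^\to$. The corresponding pushout square in $\cE^\to$ has its two ``input'' squares equal to $\cR(\alpha)$ and $\cR(\beta)$, which are cartesian by hypothesis, and its remaining three objects $\cR(f),\cR(g),\cR(h)$ all lie in $\cR$. The class $\cR$ is local by Proposition~\ref{lemma-localclass}, so Lemma~\ref{lem:lcl-descent} applies and gives $k'\in\cR$. Hence $k'$ already belongs to $\cR^\to$, so the reflection unit $k'\to\cR(k')$ is an isomorphism; and since $\cR(k)$ is the pushout of the same span formed in $\cR^\to$ — that is, the reflection of $k'$ — we get $\cR(k)\cong k'$. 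Consequently the square displayed above is, up to isomorphism, the canonical pushout square of this span in $\cE^\to$, hence is itself a genuine pushout square in $\cE^\to$.

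It then remains only to observe that this puts us exactly in the situation of Lemma~\ref{lemmadescent}: a pushout square in $\cE^\to$ whose two input squares $\cR(\alpha)$ and $\cR(\beta)$ are cartesian. That lemma yields that the two output squares $\cR(\gamma)$ and $\cR(\delta)$ are cartesian, which is the desired conclusion.

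The only real content lies in the middle step, and the point that needs care is precisely the distinction between pushouts formed in $\cR^\to$ and in $\cE^\to$. The key observation resolving it is that, after passing to $\cR$, the hypothesis ``$\cR(\alpha)$ and $\cR(\beta)$ cartesian'' becomes exactly the data required by Lemma~\ref{lem:lcl-descent} to force the ambient pushout $k'$ of objects of $\cR$ back into $\cR$; once that is established, the identification $\cR(k)\cong k'$ is automatic and the conclusion follows from a single application of the cube lemma.
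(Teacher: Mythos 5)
Your proof is correct, and its skeleton coincides with the paper's: both arguments form the pushout $k'$ of the span $\cR(h) \xleftarrow{\cR(\beta)} \cR(f) \xrightarrow{\cR(\alpha)} \cR(g)$ in the ambient arrow category $\cE^\to$, use Lemma~\ref{lem:lcl-descent} with the local class $\cR$ (local by Proposition~\ref{lemma-localclass}) to conclude $k' \in \cR$, and read off the cartesianness of the two output squares from ordinary descent as in Lemma~\ref{lemmadescent}. Where you genuinely diverge is in identifying $k'$ with $\cR(k)$. The paper builds a two-storey cube by factoring $f$, $g$, $h$, writes $k = ts$ where $s$ is a pushout of maps in $\cL$ (hence lies in $\cL$ by Lemma~\ref{lemma-stabilitybylimits}(5)) and $t = k' \in \cR$, and then invokes uniqueness of $(\cL,\cR)$-factorizations to get $t = \cR(k)$. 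You instead use only the formal consequences of Lemma~\ref{adjointnessLandR}: the reflector $\cR : \cE^\to \to \cR^\to$ preserves colimits, colimits in a reflective subcategory are reflections of ambient ones, and $k' \in \cR$ forces the unit $k' \to \cR(k') \cong \cR(k)$ to be invertible. Your route dispenses with the upper cube and with the closure of $\cL$ under colimits altogether, which is a small economy; the paper's route has the mild advantage of exhibiting the factorization $k = \cR(k)\circ\cL(k)$ explicitly. You correctly isolate the one point that needs care, namely that $\cR^\to$ is closed under limits but not under colimits in $\cE^\to$, and your resolution of it is sound.
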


 \begin{proof}
 As before, our square of arrows corresponds to a cube~\eqref{thesquareisreallyacube} in $\cE$
 such that the top and bottom squares are pushouts.
  Consider the following two-storey building, obtained by factoring the vertical
  maps $f$, $h$ and $g$ with respect to the modality $(\cL, \cR)$.
 \[
    \begin{tikzcd}[bo column sep=large]
      A \ar[rr] \ar[dd, "\cL(f)"'] \ar[dr] & & B \ar[dd, "\cL(h)"', near start] \ar[dr] & \\
      & C \ar[rr, crossing over] & & D \ar[dd, "s"]  \\
      \| f \| \ar[rr] \ar[dd, "\cR(f)"'] \ar[dr] & & \| h \| \ar[dd, "\cR(h)"', near start] \ar[dr] & \\
      & \| g \| \ar[rr, crossing over] \ar[from=uu, crossing over, "\cL(g)"', near start] & & 
      \| f \| \sqcup_{\|f\|} \| h \| \ar[dd, "t"] \\
      E \ar[rr] \ar[dr] & & F \ar[dr]  & \\
      & G \ar[rr, crossing over] \ar[from=uu, crossing over, "\cR(g)"', near start] & & H
    \end{tikzcd}
  \]
  The middle floor of the building is a pushout by construction.
  It follows that the upper and lower cubes are pushout diagram in $\cE^{\to}$,
  since the top and bottom floors of the building are pushout squares.
  The composite cube is also a pushout diagram in $\cE^{\to}$ for 
  the same reason.
  Thus, $k=ts$ since the cube ~\eqref{thesquareisreallyacube} is a pushout diagram in $\cE^{\to}$. 
  We have $s\in \cL$, since the class $\cL$ is closed under \emph{all} colimits in $\cE^{\to}$ by 
  Lemma \ref{lemma-stabilitybylimits}, and the upper cube exhibits
  $s$ as a colimit of maps in $\cL$.
  The back and left vertical faces
  of the lower cube are cartesian, since the squares $\cR(\alpha)$
  and $\cR(\beta)$ are cartesian by hypothesis. 
  Hence the front and right faces
  of the lower cube are cartesian by descent.
  Moreover, we have $t\in \cR$ by Lemma \ref{lem:lcl-descent}.
   Thus, $t=\cR(k)$ and $s=\cL(k)$, since $k=ts$.
  It follows that the front face of the lower cube is equal to $\cR(\delta)$
  and the right face is equal to $\cR(\gamma)$. This shows that the 
  squares $\cR(\delta)$ and $\cR(\gamma)$ are cartesian.
  \end{proof}

\begin{proof}[Proof of Proposition \emph{\ref{prop:l-descent}}]
  As in the previous lemmas, our square corresponds to a cube~\eqref{thesquareisreallyacube} in $\cE$.
  \[
    \begin{tikzcd}[bo column sep=large]
      A \ar[rr] \ar[dd, "f"'] \ar[dr] & & B \ar[dd, "g", near start] \ar[dr] & \\
      & C \ar[rr, crossing over] & & D \ar[dd, "k"] \\
      E \ar[rr] \ar[dr] & & F \ar[dr]  & \\
      & G \ar[rr, crossing over] \ar[from=uu, crossing over, "h"', near start] & & H
    \end{tikzcd}
  \]
 By pulling back the bottom face of the cube along the map $k:D\to H$,
  we obtain the following decomposition
  \[
    \begin{tikzcd}[bo column sep=large]
      A \ar[rr] \ar[dd, "f'"'] \ar[dr] & & B \ar[dd, "g'", near start] \ar[dr] & \\
      & C \ar[rr, crossing over] & & D \ar[dd, equal] \\
      E \times_H D \ar[rr] \ar[dd] \ar[dr] & & F \times_H D \ar[dr] \ar[dd] & \\
      & G \times_H D \ar[rr, crossing over] \ar[from=uu, crossing over, near start, "h'"'] & & D \ar[dd, "k"] \\
      E \ar[rr] \ar[dr] & & F \ar[dr] & \\
      & G \ar[rr] \ar[from=uu, crossing over] & & H.
    \end{tikzcd}
  \]
  The top cube of this diagram is a square in the category $\cE^\to$,
  \[
    \begin{tikzcd}
      f' \ar[d, "\beta'"'] \ar[r, "\alpha'"] & g' \ar[d, "\gamma'"] \\
     h' \ar[r, "\delta'"'] & 1_D
    \end{tikzcd}
  \]
  One finds easily from the composition of pullback squares that the
  gap map of the square $\beta'$ coincides with the gap map of
  the square $\beta$. Hence the square $\beta'$ is $\cL$-cartesian,
  since the square $\beta$ is $\cL$-cartesian by hypothesis. By
  Lemma~\ref{lem:l-equifib} it follows that $\cR(\beta')$ is
  cartesian.  Similarly, the face $\cR(\alpha')$ is cartesian.  So by
  Lemma~\ref{lem:rel-patching} the faces $\cR(\gamma')$ and
  $\cR(\delta')$ are cartesian.  We have $ \cR(1_D)=1_D$, since
  $1_D\in \cR$. Hence the maps $\cR(g')$ and $\cR(h')$ are
  invertible. It follows that $g'\in \cL$ and $h'\in \cL$.  Hence
  the square $\delta$ is $\cL$-cartesian, since $h'$ is the gap map of
  $\delta$.  Similarly, the square $\gamma$ is $\cL$-cartesian, since
  $g'$ is the gap map of $\gamma$.
 \end{proof} 

\begin{rem}\label{generalLdescent}
Proposition~\ref{prop:l-descent}
is a special case of a more general results
about $\cL$-cartesian squares. 
Let us  denote by $\Cart_{\cL}(\cE^\to)$ the (non-full)
subcategory of $\cE^\to$ whose morphisms
are $\cL$-cartesian.
Then the subcategory
$\Cart_{\cL}(\cE^\to)$ of $\cE^\to$ is closed under colimits.
As in the case of ordinary descent, the closure condition here means
two things: both that colimits exists in the subcategory
$\Cart_{\cL}(\cE^\to)$ and that they are preserved by the inclusion
functor $\Cart_{\cL}(\cE^\to)\to \cE^\to$.
\end{rem}

\section{The Blakers-Massey Theorem}
\label{sec:gen-bm}

In this section, we at last turn to the formulation and proof of our Generalized
Blakers-Massey Theorem, as well as some immediate applications.

\subsection{The statement}

\begin{thm}[Generalized Blakers-Massey]\label{thm:gen-bm}
Let $(\cL,\cR)$ be a modality in a topos $\cE$.
Consider a pushout square:
  \[
  \begin{tikzcd}
    Z \ar[d, "f"'] \ar[r, "g"] & Y \ar[d, "k"] \\
    X \ar[r, "h"'] & W \pomark
  \end{tikzcd}
  \]
  Suppose that $\Delta f \ppz \Delta g \in \cL$. Then the square is
  $\cL$-cartesian.
\end{thm}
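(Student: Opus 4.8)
The plan is to run the Blakers-Massey argument of Finster–Lumsdaine in this generalized setting, exploiting the descent theorem for $\cL$-cartesian squares (Proposition~\ref{prop:l-descent}) as the main engine. The key geometric idea is that the gap map $\gap{f}{g} : Z \to X \times_W Y$ can be built, via descent, out of the comparison maps obtained by filtering the pushout $W = X \sqcup_Z Y$ by its ``skeleta''. Concretely, one forms the bar-type resolution of $W$: the pushout can be presented as a colimit of a diagram whose terms involve $X$, $Y$, and iterated fiber products $Z \times_X Z$, $Z \times_Y Z$. The strategy is to exhibit a cube (or a sequence of cubes) whose outer square is the square in question, whose ``inner'' squares are $\cL$-cartesian by a direct computation with the hypothesis $\Delta f \ppz \Delta g \in \cL$, and then invoke Proposition~\ref{prop:l-descent} repeatedly to propagate the $\cL$-cartesian property outward to the square $Z \to X \times_W Y$.

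More precisely, I would proceed in the following steps. First, reduce to a fiberwise statement using Corollary~\ref{lemma-inSpaces} and the locality of $\cL$ (Proposition~\ref{lemma-localclass}): since $\cL$-cartesianness of a square is detected on the fibers of the relevant maps over the base, and since the hypothesis $\Delta f \ppz \Delta g \in \cL$ is, by the discussion preceding the theorem, precisely a fiberwise statement about the maps $f^{-1}(f(z)) \vee_z g^{-1}(g(z)) \to f^{-1}(f(z)) \times g^{-1}(g(z))$. Second, construct the relevant colimit presentation of the pushout square: one writes $X \times_W Y$, or better the gap map, as a colimit over a suitable index category of simpler pieces. The cleanest route is to use the ``generalized blakers-massey'' cube from the HoTT proof, in which the codomain $W$ is resolved and one tracks the induced maps on total spaces. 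Third, verify that the base-case squares are $\cL$-cartesian: this is where the hypothesis $\Delta f \ppz \Delta g \in \cL$ enters, via Example~\ref{example-po}~(5) (the pushout product computes fiberwise as an external join of fibers) and the identification of the fibers of the projections $p_1 : Z \times_X Z \to Z$ and $p_1 : Z \times_Y Z \to Z$. Fourth, apply Proposition~\ref{prop:l-descent} (closure of $\Cart_{\cL}(\cE^\to)$ under pushouts) together with Lemma~\ref{lem:l-pullbacks} (composition and cancellation of $\cL$-cartesian squares, the latter along covers — here the canonical maps from the pieces of a colimit to the colimit, which form a coverage by Remark~\ref{rem:coverage}~(5)) to conclude that the outer square is $\cL$-cartesian.

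The main obstacle, I expect, is the bookkeeping in the colimit presentation: one must set up the resolution of the pushout $W$ carefully enough that (a) all the ``small'' squares appearing are genuinely $\cL$-cartesian — not merely cartesian — and (b) the cancellation steps in Lemma~\ref{lem:l-pullbacks} apply, which requires the relevant maps to be covers. Getting the indexing right so that the gap map $\gap{f}{g}$ appears as the comparison map for the \emph{whole} resolution, rather than for some auxiliary square, is the delicate point. A secondary technical difficulty is that the naive resolution produces squares that are cartesian but whose gap maps one must recognize as base changes of $\Delta f \ppz \Delta g$ (or of maps built from it by pushout product in a slice); establishing these identifications requires the slice-compatibility of the pushout product (Example~\ref{example-po}~(5)) and some care with the pointed-object structures on $Z \times_X Z$ and $Z \times_Y Z$ over $Z$. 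Once the diagram is correctly assembled, though, the proof is essentially a formal consequence of descent for $\cL$-cartesian squares.
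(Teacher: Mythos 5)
Your proposal correctly identifies the engine of the argument --- descent for $\cL$-cartesian squares (Proposition~\ref{prop:l-descent}) applied to a cube whose bottom face is the given pushout and whose back and left faces are made $\cL$-cartesian by the hypothesis $\Delta f \ppz \Delta g \in \cL$ --- and this is indeed the paper's strategy. But as written it is a plan rather than a proof: the entire mathematical content lies in the constructions you defer. Specifically, you must (i) exhibit the cube explicitly, with top face
$(Z\times_X Z)\sqcup_Z(Z\times_Y Z) \to Z\times_X Z,\ Z\times_Y Z \to Z$
and prove that this top face is co\-car\-te\-sian (this is Lemma~\ref{propqXqY}, a wedge-versus-product computation in the pointed slice topos $\cE_{/Z}$); (ii) identify the cartesian gap maps of the back and left faces with $\Delta f \ppz \Delta g$ up to isomorphism (Lemmas~\ref{squareford} and~\ref{acruciallemma}, which require a careful factorization of those faces through $Z\times_X Z\times_Y Z$ and $Z\times_Y Z\times_X Z$); and (iii) explain how the $\cL$-cartesianness of the \emph{right} face of the cube, obtained from descent, yields the $\cL$-cartesianness of the original square. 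No iterated ``bar-type resolution'' is needed: a single cube suffices, and setting up a longer filtration would only multiply the bookkeeping you rightly worry about.

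Two of your concrete proposed steps would fail as stated. First, the reduction ``to a fiberwise statement using Corollary~\ref{lemma-inSpaces}'' is only available in the category of spaces $\cS$, where the points $\term\to Y$ form a coverage; in an arbitrary topos there is no such reduction, and the paper's discussion of fibers before the theorem is purely motivational. The proof must be carried out diagrammatically, using only the locality of $\cL$ (Proposition~\ref{lemma-localclass}) in the form of Lemma~\ref{lem:l-pullbacks}. Second, the cover needed for the final cancellation step is the map $f:Z\to X$ itself (the right face of the cube decomposes as a cartesian square $Z\times_X Z\to Z$ over $Z\xrightarrow{f}X$ followed by the original square, and Lemma~\ref{lem:l-pullbacks} cancels the first factor only when $f$ is a cover); this is not supplied by Remark~\ref{rem:coverage}(5) applied to the colimit inclusions into $W$. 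Instead one must first prove a reduction lemma (Lemma~\ref{lem:surj-reduction}): factoring $g$ as a cover followed by a monomorphism and using Proposition~\ref{prop:mono-pushout}, both the gap map and the hypothesis are unchanged when $Y$ is replaced by the image, so one may assume without loss of generality that $g$ (equivalently, by symmetry, $f$) is a cover. Without this reduction the cancellation step, and hence the conclusion, does not go through.
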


\begin{rem}
  While the use of the relative pushout product $\ppz$ in the statement
  of the theorem is the most general result, in practice it is often simpler
  to check that $\Delta f \pp \Delta g \in \cL$, as will be done in all the
  applications of Section~\ref{sec:applications}.  Indeed, according to
  Example~\ref{example-po}\eqref{pushout-product-and-base-change}, the map $\Delta f \ppz \Delta g$ is 
  a base change of the map $\Delta f \pp \Delta g$ and hence contained
  in $\cL$ as soon as the latter is.
\end{rem}

Any set of maps in a topos generates a modality~\cite{Anel-Subramaniam:SOA}. 
This leads to the following reformulation of the main theorem.

\begin{cor}
The cartesian gap map of the square in Theorem~\emph{\ref{thm:gen-bm}}
belongs to the left class of the modality generated by the map $\Delta f\ppz \Delta g$.
\end{cor}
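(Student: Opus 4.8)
The plan is to recognize that the corollary is a direct specialization of Theorem~\ref{thm:gen-bm}. First I would invoke the cited (in-progress) result that any set of maps in a topos generates a modality, and apply it to the singleton $\{\Delta f \ppz \Delta g\}$ to obtain a modality $(\cL_0,\cR_0)$ on $\cE$. By the defining property of the generated modality, the left class $\cL_0$ is the smallest left class of a modality containing the map $\Delta f \ppz \Delta g$; in particular, $\Delta f \ppz \Delta g \in \cL_0$.

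With this modality in hand, I would apply Theorem~\ref{thm:gen-bm} to the given pushout square, taking $(\cL,\cR)$ to be $(\cL_0,\cR_0)$. The hypothesis of the theorem---that $\Delta f \ppz \Delta g$ belong to the left class---holds tautologically by the previous paragraph. The conclusion of the theorem is then that the square is $\cL_0$-cartesian, which by Definition~\ref{def:l-cartesian} means precisely that its cartesian gap map $\gap{f}{g}\colon Z\to X\times_W Y$ lies in $\cL_0$. This is exactly the assertion of the corollary.

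The only genuine content beyond this specialization is the existence of the generated modality, i.e.\ the statement that closing $\{\Delta f \ppz \Delta g\}$ under the constraints imposed by the modality axioms (orthogonality together with stability of the left class under base change) yields the left class of an actual factorization system, rather than merely a saturated class of maps. This is the ``work in progress'' referenced just before the corollary, and it is the sole obstacle I anticipate; once it is granted, no further homotopy-theoretic input is required, since the corollary simply re-reads the main theorem against a tautologically optimal choice of modality.
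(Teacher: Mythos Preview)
Your proposal is correct and matches the paper's approach exactly: the paper states the corollary without proof, introducing it as a direct reformulation of Theorem~\ref{thm:gen-bm} once one grants the in-progress result that any set of maps in a topos generates a modality. Your argument---apply that result to $\{\Delta f\ppz\Delta g\}$, then invoke Theorem~\ref{thm:gen-bm} with the resulting modality---is precisely the intended one-line deduction.
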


A special case of the generalized Blakers-Massey
theorem~\ref{thm:gen-bm} is obtained by considering the modality whose
left class consists of the isomorphisms and whose right class is given
by all maps.  We refer to this as the ``Little Blakers-Massey Theorem'',
and the statement is the following:
 
\begin{cor}[Little Blakers-Massey Theorem]\label{thm:littlebm}
Consider a pushout square in a topos $\cE$.
\begin{equation*} 
  \begin{tikzcd}
    Z \ar[d, "f"'] \ar[r, "g"] & Y \ar[d] \\
    X \ar[r] & W \pomark
  \end{tikzcd}
  \end{equation*}
If the map $\Delta f \ppz \Delta g$ is invertible, then the square is
cartesian. 
\end{cor}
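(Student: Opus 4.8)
The plan is to obtain this as the special case of Theorem~\ref{thm:gen-bm} associated to the ``trivial'' modality, in which the left class $\cL$ consists of all isomorphisms of $\cE$ and the right class $\cR$ of all maps of $\cE$.

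First I would check that this pair $(\cL,\cR)$ really is a modality. Every map $f : A \to B$ admits the factorization $f = f \circ \id_A$ with $\id_A \in \cL$ and $f \in \cR$. Since precomposition along an isomorphism induces an equivalence of mapping spaces, every isomorphism is left orthogonal to every map, so $\cL \subseteq {}^\perp\cR$ and $\cR \subseteq \cL^\perp$; conversely a map left orthogonal to all maps is in particular left orthogonal to itself, and filling the commutative square with identities along its horizontal edges produces a two-sided inverse, so ${}^\perp\cR \subseteq \cL$, while $\cL^\perp \subseteq \cR$ holds trivially because $\cR$ is the class of all maps. Thus $(\cL,\cR)$ is a factorization system, and its left class is stable under base change since the pullback of an isomorphism is an isomorphism; hence $(\cL,\cR)$ is a modality.

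Next I would unwind the hypothesis and conclusion of Theorem~\ref{thm:gen-bm} for this modality. Membership in $\cL$ is precisely invertibility, so the hypothesis $\Delta f \ppz \Delta g \in \cL$ coincides with the hypothesis of the corollary. By Definition~\ref{def:l-cartesian}, the pushout square is $\cL$-cartesian if and only if its cartesian gap map $\gap{f}{g} : Z \to X \times_W Y$ lies in $\cL$, i.e.\ is an isomorphism --- which is exactly the assertion that the square is cartesian. Applying Theorem~\ref{thm:gen-bm} to the given pushout square then yields the corollary. I do not anticipate any genuine obstacle here: the only point requiring verification is the (entirely formal) claim that ``isomorphisms and all maps'' constitute a modality, after which the corollary is simply the main theorem read through this specialization.
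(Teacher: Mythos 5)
Your proposal is correct and matches the paper's own treatment: the corollary is obtained precisely by specializing Theorem~\ref{thm:gen-bm} to the modality $(\text{isomorphisms}, \text{all maps})$, which the paper identifies as $(C_{-2}(\cE),T_{-2}(\cE))$. Your verification that this pair is a modality, including the orthogonality argument showing ${}^\perp\cR$ consists exactly of the isomorphisms, is sound and fills in details the paper leaves implicit.
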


Let us continue by describing the map
$\Delta f \ppz \Delta g$ in more detail.  
If $f:Z\to X$ is a map in $\cE$, then the map
$\Delta f:Z\to Z\times_X Z$ is a section of the projection
$p_1:Z\times_X Z\to Z$ onto the first factor.
Consequently, we can view 
$Z \times_X Z$ as a \emph{pointed} object of the slice topos
$\cE_{/Z}$ with structure map given by the first projection $p_1:Z\times_X Z\to Z$ and
basepoint given by the diagonal map $\Delta f : Z \to Z \times_X Z$.
Similarly, if $g:Z\to Y$ is another map in $\cE$, then 
we can view 
$Z \times_Y Z$ as a \emph{pointed} object of the slice topos
$\cE_{/Z}$ with structure map given by the first projection $p_1:Z\times_Y Z\to Z$ and
basepoint given by the diagonal map $\Delta g : Z \to Z \times_Y Z$:
\[
  \begin{tikzcd}
   Z\times_X Z \ar[d, "p_1"'] \\
    Z \ar[u, bend right, "\Delta f"']
  \end{tikzcd}
  \quad \quad 
  \begin{tikzcd}
   Z\times_Y Z \ar[d, "p_1"'] \\
    Z \ar[u, bend right, "\Delta g"']
  \end{tikzcd}
\]
Recall from Example~\ref{example-po}~(2) that
for any two pointed objects $a:\term\to A$ and $b:\term\to B$ of a topos $\cE$, the
pushout product $a \pp b : A\vee B\to A\times B$ is
the canonical inclusion of the wedge into the product.
This applies to the map $\Delta f \ppz \Delta g$ viewed in $\cE_{/Z}$:
  \begin{equation*}
  \begin{tikzcd}
  (Z\times_X Z) \sqcup_Z (Z\times_Y Z) \ar[dr,"p"'] \ar[rr, "\Delta f \ppz \Delta g"] && (Z\times_X Z) \times_Z (Z\times_Y Z) \ar[dl,"q"]  \\
   & Z &
  \end{tikzcd}
 \end{equation*}

It is instructive to compute the fibers of the maps $p$ and $q$ at $z\in Z$.
The fiber of the projection $p_1:Z\times_X Z\to Z$ at $z\in Z$ can be identified with the fiber $f^{-1}(f(z))$ of $f$ at $f(z)$ since the following square is cartesian:
  \begin{equation}\label{apullback} 
  \begin{tikzcd}
    Z\times_X Z \ar[d, "p_1"'] \ar[r, "p_2"] \pbmark & Z \ar[d, "f"] \\
    Z \ar[r, "f"'] & X 
  \end{tikzcd}
  \end{equation}
Similarly, the fiber of the projection $p_1:Z\times_Y Z\to Z$ at $z\in Z$ can
be identified with the fiber $g^{-1}(g(z))$ of $g$ at $g(z)$.
It follows that the fiber of the map $p$ at $z\in Z$
can be identified with the wedge $f^{-1}(f(z))\vee_z g^{-1}(g(z))$, while the fiber of $q$ can be identified with the product $ f^{-1}(f(z))\times g^{-1}(g(z))$. The map 
$\Delta f \ppz \Delta g$ is given fiberwise by the canonical maps 
\[ f^{-1}(f(z))\vee_z g^{-1}(g(z))\to f^{-1}(f(z))\times g^{-1}(g(z)) \]
as $z$ ranges over $Z$. To summarize, there is a commuting diagram
\[
  \begin{tikzcd}
    f^{-1}(f(z))\vee_z g^{-1}(g(z)) \ar[d] \ar[rr] &&  f^{-1}(f(z))\times g^{-1}(g(z))\ar[d] \\
    (Z\times_X Z) \sqcup_Z (Z\times_Y Z) \ar[rr, "\Delta f \ppz \Delta g"] \ar[d, "p"'] && (Z\times_X Z) \times_Z (Z\times_Y Z) \ar[d, "q"]  \\
     Z \ar[rr, equal] && Z
  \end{tikzcd}
  \]
whose vertical lines are fiber sequences over $z\in Z$.

For the upcoming proof we will need yet another description of $\Delta f \ppz \Delta g$.
By definition, it is the cogap map of the following square
\begin{equation}  \label{BMsquare}
  \begin{tikzcd}
    Z  \ar[d,"{\Delta f}"'] \ar[rrr,"{\Delta g}"] &&& Z \times_Y Z \ar[d,"{
    \Delta f \times_Z (Z \times_Y Z)}"] \\
    Z \times_X Z \ar[rrr,"{  (Z \times_X Z) \times_Z  \Delta g}"'] &&& (Z \times_X Z)\times_Z(Z \times_Y Z)
  \end{tikzcd}
\end{equation} 
where $(Z \times_X Z)\times_Z(Z \times_Y Z)$ is the fiber product over $Z$
of the projections 
  $$p_1:Z \times_X Z\to Z, \ (z_1,z_2)\mapsto z_1 $$ 
and 
  $$p_1:Z \times_Y Z\to Z, \ (z_1,z_2)\mapsto z_1 . $$

We will use a more explicit notation for the various maps in the square.
For example, $\Delta f=(1_Z,1_Z): Z\to Z \times_X Z $
since $(\Delta f)(z)=(z,z)$ for every $z\in Z$.
Similarly, $\Delta g=(1_Z,1_Z): Z\to Z \times_Y Z $
since $(\Delta g)(z)=(z,z)$ for every $z\in Z$.
The notation is not ambiguous as long as the domain
and the codomain of the maps are clear.
For example, the vertical map on the right in~\eqref{BMsquare} is 
  $$\Delta f \times_Z (Z \times_Y Z)=(p_1,p_1,p_1,p_2)$$
since 
  $$\bigl(\Delta f \times_Z (Z \times_Y Z)\bigr)(z_1,z_2)=(z_1,z_1,z_1,z_2) .$$
Beware that the fiber product over $Z$ is using the projection $p_1:Z \times_Y Z \to Z$ onto the first factor.
Similarly, 
  $$(Z \times_X Z) \times_Z  \Delta g=(p_1,p_2,p_1,p_1), $$
since 
 $$(Z \times_X Z) \times_Z  \Delta g(z_1,z_2)=(z_1,z_2,z_1,z_1) .$$
Beware again that the fiber product over $Z$ is using 
$p_1:Z \times_X Z \to Z$, and {\it not} $p_2$, because $p_1$ is the structure map of $Z \times_X Z$ over $Z$.  

With this notation, the square~\eqref{BMsquare} becomes 
\begin{equation}  \label{BMsquare2}
  \begin{tikzcd}
    Z  \ar[d,"{(1_Z,1_Z)}"'] \ar[rrr,"{(1_Z,1_Z)}"] &&& Z \times_Y Z 
    \ar[d,"{(p_1,p_1,p_1,p_2)}"] \\
    Z \times_X Z \ar[rrr,"{ (p_1,p_2,p_1,p_1)}"'] &&& (Z \times_X Z)\times_Z(Z \times_Y Z)
  \end{tikzcd}
\end{equation} 
In terms of the notation introduced in Section~\ref{sec:higher-cats},
the cogap map of the square has the following description
\begin{align*}
  \Delta f \ppz \Delta g &= \cogap{(p_1, p_2, p_1, p_1)}{(p_1,p_1,p_1,p_2)} \\
  & = \begin{bmatrix}
    p_1 & p_2 & p_1 & p_1       \\
    p_1 & p_1       & p_1 & p_2 
   \end{bmatrix}\\
                          &= (\cogap{p_1}{p_1}, \cogap{p_2}{p_1}, \cogap{p_1}{p_1}, \cogap{p_1}{p_2})
\end{align*}
which will prove useful for calculations.

\subsection{The proof}
\label{subsec:the-proof}

We will use the fact that the map $\Delta f \ppz \Delta g$ is isomorphic to two closely related maps: 
\begin{align*}
  \delta_{XY} &: (Z \times_X Z) \sqcup_Z (Z \times_Y Z) \to Z \times_X Z \times_Y Z \\
  \delta_{YX} &: (Z \times_X Z) \sqcup_Z (Z \times_Y Z) \to Z \times_Y Z \times_X Z 
\end{align*}
which are defined by
\begin{eqnarray*}
  \delta_{XY}  &:=& \cogap{(p_2,p_1,p_1)}{(p_2,p_2,p_1)} \\
              & = &\begin{bmatrix}
    p_2 & p_1 & p_1       \\
    p_2 & p_2       & p_1 
   \end{bmatrix}\\
             &=& (\cogap{p_2}{p_2}, \cogap{p_1}{p_2}, \cogap{p_1}{p_1}) \\
  \\
  \delta_{YX}  &:=& \cogap{(p_2,p_2,p_1)}{(p_2,p_1,p_1)} \\
  & = &\begin{bmatrix}
    p_2 & p_2 & p_1       \\
    p_2 & p_1       & p_1 
   \end{bmatrix}\\
              &=& (\cogap{p_2}{p_2}, \cogap{p_2}{p_1}, \cogap{p_1}{p_1})
\end{eqnarray*}

\begin{lem}
  \label{squareford}
  The maps $\Delta f \ppz \Delta g$, $\delta_{XY}$, and $\delta_{YX}$ are isomorphic as objects of the category $\cE^\to$ .
\end{lem}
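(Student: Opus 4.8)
The plan is to exhibit explicit isomorphisms in $\cE^\to$ between the three maps by identifying their common source and target up to canonical isomorphism, and then checking that the maps agree under these identifications. All three maps have the \emph{same} source, namely $(Z\times_X Z)\sqcup_Z(Z\times_Y Z)$, so the real content is to identify the three targets and verify compatibility of the structure maps. The targets are, respectively, $(Z\times_X Z)\times_Z(Z\times_Y Z)$, $Z\times_X Z\times_Y Z$, and $Z\times_Y Z\times_X Z$; the first step is to observe that all three are canonically isomorphic via a straightforward manipulation of iterated fiber products (a ``Fubini'' reshuffling of pullbacks), since in each case we are pulling back $Z\times_X Z$ and $Z\times_Y Z$ over their common factor $Z$ along the first projections. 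Concretely, an element of any of these three objects is a tuple $(z_0,z_1,z_2)$ with $f(z_0)=f(z_1)$ and $g(z_0)=g(z_2)$, just written in different orders.

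First I would write down the reshuffling isomorphisms on targets precisely, as maps in $\cE$ induced by the universal property of pullbacks: for instance, $(Z\times_X Z)\times_Z(Z\times_Y Z)\xrightarrow{\ \sim\ } Z\times_X Z\times_Y Z$ sends the pair of pairs $((z_0,z_1),(z_0,z_2))$ (glued over the common $z_0$) to the triple $(z_1,z_0,z_2)$ sitting in the pullback of $X\xleftarrow{f} Z\xrightarrow{f} Z$ followed by $Z\xleftarrow{g} Z\xrightarrow{g}\cdots$ — one should just spell out which projections realize this and check it is an isomorphism by giving the inverse, again via the universal property. Then I would do the same for $\delta_{YX}$, giving a reshuffling $(Z\times_X Z)\times_Z(Z\times_Y Z)\xrightarrow{\ \sim\ } Z\times_Y Z\times_X Z$ sending the glued pair to $(z_2,z_0,z_1)$.

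Next I would verify that, under these target isomorphisms, the structure maps of the three arrows coincide — i.e. that $\Delta f\ppz\Delta g$ precomposed (postcomposed, rather: the codomains are being identified) yields $\delta_{XY}$ and $\delta_{YX}$. This is where the explicit cogap-map formulas given just before the lemma do all the work: one compares $\cogap{((p_1,p_2),(p_1,p_1))}{((p_1,p_1),(p_1,p_2))}$ with $\cogap{(p_2,p_1,p_1)}{(p_2,p_2,p_1)}$ and $\cogap{(p_2,p_2,p_1)}{(p_2,p_1,p_1)}$ componentwise, using the rewriting $T(f_j^i)=\lfloor(f_1^1,\dots,f_n^1),\dots\rfloor=(\lfloor f_1^1,\dots\rfloor,\dots)$ from Section~\ref{sec:higher-cats}. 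On each of the two coproduct summands ($Z\times_X Z$ and $Z\times_Y Z$) the relevant components of the tuples of projections match up exactly after permuting coordinates, so the triangles of maps in $\cE^\to$ commute. Assembling these into isomorphisms of objects of $\cE^\to$ (a commuting square with vertical iso on domains the identity and iso on codomains the reshuffling) finishes the proof.

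\textbf{Main obstacle.} There is no deep obstacle here; the lemma is essentially bookkeeping. The one place to be careful is keeping the many copies of $Z$ and their projections straight — making sure that when one writes ``$Z\times_X Z\times_Y Z$'' the middle $Z$ is the one carrying both the $f$-constraint with the first factor and the $g$-constraint with the third, and that the basepoint/structure-map data (the fact that each of these is built as a pushout under $Z$ via the relevant diagonals) is respected by the permutations. So the real work is choosing a consistent convention for the order of factors and then mechanically checking that the three cogap formulas are the same tuple of cogap maps up to this convention. I expect the cleanest writeup to fix the "tuple of three points $(z_0,z_1,z_2)$ with prescribed constraints" description, observe all three targets represent this same functor, and read off that the three maps are the same map into it.
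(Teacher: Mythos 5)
Your overall strategy coincides with the paper's: exhibit explicit isomorphisms in $\cE^\to$, using a reshuffling isomorphism $\theta\colon (Z\times_X Z)\times_Z(Z\times_Y Z)\to Z\times_X Z\times_Y Z$ on the codomains and then checking the two coproduct summands. But there is a genuine gap in the plan: the isomorphism on the common domain $(Z\times_X Z)\sqcup_Z(Z\times_Y Z)$ \emph{cannot} be taken to be the identity, and the verification you defer would fail on one summand. Concretely, with $\theta=(p_2p_1,p_1p_1,p_2p_2)$ (your ``$((z_0,z_1),(z_0,z_2))\mapsto(z_1,z_0,z_2)$''), the composite $\theta\circ(\Delta f\ppz\Delta g)$ restricted to the summand $Z\times_Y Z$ is the map $(p_1,p_1,p_2)$, i.e.\ $(z_0,z_2)\mapsto(z_0,z_0,z_2)$, whereas $\delta_{XY}$ restricted to that summand is $(p_2,p_2,p_1)$, i.e.\ $(z_0,z_2)\mapsto(z_2,z_2,z_0)$. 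These do not agree; they differ by precomposition with the involution $\sigma_Y=(p_2,p_1)$ of $Z\times_Y Z$. Since $\sigma_Y$ fixes the diagonal $\Delta g$, it induces an automorphism $\id\sqcup_Z\sigma_Y$ of the pushout, and the correct commuting square is $\theta\circ(\Delta f\ppz\Delta g)=\delta_{XY}\circ(\id\sqcup_Z\sigma_Y)$. Likewise, comparing $\delta_{XY}$ with $\delta_{YX}$ requires the automorphism $\sigma_X\sqcup\sigma_Y$ on the domain together with $\beta=(p_3,p_2,p_1)$ on the codomain; neither comparison works with the identity on the source.

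This also undercuts your proposed ``cleanest writeup'': after identifying all three codomains with the object classifying triples $(z_0,z_1,z_2)$ with $f(z_0)=f(z_1)$ and $g(z_0)=g(z_2)$, the three maps are \emph{not} the same map into that common target --- no choice of reshuffling on the codomain alone reconciles the two summands simultaneously, because the first summand forces the first coordinate of the comparison to be $p_2$ of $Z\times_X Z$ while the second summand forces it to involve $p_2$ of $Z\times_Y Z$, and no single natural projection does both. That is precisely why the lemma asserts only that the three maps are isomorphic as objects of $\cE^\to$, with nontrivial automorphisms of the domain appearing in the isomorphisms. The fix is mechanical once spotted (insert $\sigma_X$, $\sigma_Y$ where needed), but as written the proposal asserts a commutativity that is false.
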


\begin{proof} 
The maps $\delta_{XY}$ and $\delta_{YX}$
   are isomorphic since the following square commutes
  \[
    \begin{tikzcd}[column sep=large]
      (Z \times_X Z) \sqcup_Z (Z \times_Y Z) \ar[r, "\sigma_X \sqcup \sigma_Y"] \ar[d, "\delta_{XY}"'] & (Z \times_X Z) \sqcup_Z (Z \times_Y Z) \ar[d, "\delta_{YX}"] \\
      Z \times_X Z \times_Y Z \ar[r, "{(p_3,p_2,p_1)}"'] & Z \times_Y Z \times_X Z
    \end{tikzcd}
  \]
 and the maps $(p_3,p_2,p_1)$, $\sigma_X := (p_2, p_1)$ and
 $\sigma_Y := (p_2, p_1)$ are invertible.
 It is easy to see that the map
 $$\theta:=(p_2,p_1,p_2, p_3):
 Z\times_X Z\times_Y Z \to \left(Z\times_X Z\right)\times_Z \left(Z\times_Y Z\right)
 $$  
is invertible.
We have a commutative diagram 
  \[
    \begin{tikzcd}[column sep=large]
      (Z \times_X Z) \sqcup_Z (Z \times_Y Z) \ar[d, "\Delta f \ppz \Delta g"'] \ar[r, "\id \sqcup_Z \sigma_Y"] &
      (Z \times_X Z) \sqcup_Z (Z \times_Y Z) \ar[d, "\delta_{XY}"] \\
      (Z \times_X Z) \times_Z (Z \times_Y Z) & 
      \ar[l, "\theta"'] 
      Z \times_X Z \times_Y Z
    \end{tikzcd}
  \]
since
  \begin{eqnarray*}  \label{definitiondprime}
   \Delta f\ppz \Delta g     &=&  (\cogap{p_1}{p_1}, \cogap{p_2}{p_1}, \cogap{p_1}{p_1} ,\cogap{p_1}{p_2}) \\
                                              &=&\theta \circ (\cogap{p_2}{p_1}, \cogap{p_1}{p_1},\cogap{p_1}{p_2})\\
                                              &=&\theta \circ  (\cogap{p_2}{p_2 \circ \sigma_Y}, \cogap{p_1}{p_2 \circ \sigma_Y},\cogap{p_1}{p_1 \circ \sigma_Y}) \\
                                              &=& \theta \circ (\cogap{p_2}{p_2}, \cogap{p_1}{p_2},\cogap{p_1}{p_1}) \circ (\id \sqcup_Z \sigma_Y) \\
                                              &=&\theta \circ  \delta_{XY} \circ (\id \sqcup_Z \sigma_Y).
  \end{eqnarray*}
  This shows that the maps $\Delta f \ppz \Delta g$ and $\delta_{XY}$ are isomorphic, since the
   horizontal maps of the diagram are invertible.  
    \end{proof}

Our next step in proving Theorem~\ref{thm:gen-bm} will be to show that we may assume without loss of generality that the map $g$ is a cover.  To see this, choose a factorization of $g$
\[
\begin{tikzcd}
  Z \ar[rr, "g"] \ar[dr, two heads, "s"'] & & Y \\
  & Y' \ar[ur, rightarrowtail, "m"'] & 
\end{tikzcd}
\]
where $s$ is a cover and $m$ is a monomorphism. 
Now consider the diagram formed by taking pushouts:

\begin{equation}
  \label{eq:sm-diag}
  \begin{tikzcd}
    Z \ar[d, "f"'] \ar[r, two heads, "s"] \ar[dr, phantom, "\scriptstyle{(a)}"] &
    Y' \ar[d] \ar[r, rightarrowtail, "m"] \ar[dr, phantom, "\scriptstyle{(b)}"] &
    Y \ar[d] \\
    X \ar[r] & W' \pomark \ar[r] & W. \pomark
  \end{tikzcd}
\end{equation}

\begin{lem}
  \label{lem:surj-reduction}
  The gap map $\gap{f}{g} : Z \to X \times_W Y$ as an object of the category $\cE^{\to}$ is isomorphic 
  to the gap map $\gap{f}{s} : Z \to X \times_{W'}Y'$. Similarly, the
  map $\Delta g : Z \to Z \times_{Y} Z$ is isomorphic
  to the map $\Delta s : Z \to Z  \times_{Y'} Z$.
\end{lem}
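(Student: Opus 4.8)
The plan is to deduce both claims from Proposition~\ref{prop:mono-pushout}: since $m$ is a monomorphism, that proposition, applied to the pushout square $(b)$ in~\eqref{eq:sm-diag}, shows that $(b)$ is cartesian and that the induced map $W'\to W$ is again a monomorphism. Everything else is bookkeeping with the pasting lemma for pullback squares.

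First I would treat the statement about the gap maps. The composite $X\to W$ factors through $W'$, so by the pasting lemma together with the fact that $(b)$ is cartesian,
\[ X\times_W Y \;=\; X\times_{W'}\bigl(W'\times_W Y\bigr) \;=\; X\times_{W'}Y'. \]
Reading off the two projections, the canonical comparison map $X\times_{W'}Y'\to X\times_W Y$ (induced by $\id_X$ on one factor and by $m$ on the other) is therefore an isomorphism; precomposing it with $\gap{f}{s}$---whose two components are $f$ and $s$---yields the map whose components are $f$ and $ms=g$, namely $\gap{f}{g}$. This exhibits the required isomorphism $\gap{f}{s}\cong\gap{f}{g}$ in $\cE^\to$, taken to be the identity on the common source $Z$. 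Next I would treat the diagonals, which in fact uses only that $m$ is a monomorphism and not the pushout: by Definition~\ref{def:mono} we have $Y'\times_Y Y' = Y'$ with both projections equal to the identity, and since $g=ms$ factors through $Y'$ on each leg of the cospan $Z\to Y\leftarrow Z$, two applications of the pasting lemma give
\[ Z\times_Y Z \;=\; Z\times_{Y'}\bigl(Y'\times_Y Y'\bigr)\times_{Y'}Z \;=\; Z\times_{Y'}Z ; \]
the resulting comparison isomorphism carries $\Delta s$ (whose components are $\id_Z,\id_Z$) to $\Delta g$.

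The only point requiring care is verifying that each comparison isomorphism is compatible with the relevant map out of $Z$, so that the isomorphisms genuinely live in the arrow category $\cE^\to$ rather than merely between targets; this becomes immediate once the gap maps and diagonals are written out via their universal properties. I do not expect a real obstacle here---morally the lemma just says that pushing out and pulling back along a monomorphism are harmless---but the diagram chase must be carried out with enough care to keep track of all the projections.
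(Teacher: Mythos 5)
Your proposal is correct and follows essentially the same route as the paper: both parts rest on Proposition~\ref{prop:mono-pushout} making square $(b)$ cartesian (for the gap maps) and on $m$ being a monomorphism so that $Y'\times_Y Y'=Y'$ (for the diagonals), with the identifications then obtained by pasting pullback squares. The paper likewise defers the check that the comparison isomorphisms commute with the maps out of $Z$ to the reader.
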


\begin{proof}
  Since in each case, the maps have the same domain, it suffices to
  exhibit an isomorphism between the codomains (making the appropriate
  triangle commute, a detail we leave to the reader).  In the first case,
  consider the diagram:
  \[
  \begin{tikzcd}
    X \times_{W'} Y' \pbmark \ar[r] \ar[d] & Y' \ar[d] \ar[r, rightarrowtail, "m"] \pbmark & Y \ar[d] \\
    X \ar[r] & W' \ar[r] & W.
  \end{tikzcd}
  \]
  The left square is a pullback by construction.  The right
  square is a pullback by Proposition~\ref{prop:mono-pushout}.  It
  follows that the object $X \times_W Y$ is isomorphic to the object $X \times_{W'} Y'$.

  In the second case, we consider the diagram:
  \[
  \begin{tikzcd}
    Z \times_{Y'} Z \ar[r] \ar[d] & Z \ar[r, equal] \ar[d, two heads, "s"] & Z \ar[d, two heads, "s"] \\
    Z \ar[r, two heads, "s"'] \ar[d, equal] & Y' \ar[r, equal] \ar[d, equal] & Y' \ar[d, "m"] \\
    Z \ar[r, two heads, "s"'] & Y' \ar[r, "m"'] & Y.
  \end{tikzcd}
  \]
  The upper left square is cartesian by construction.  The bottom
  right square is cartesian since $m$ is a monomorphism.  The
  remaining two squares are trivially cartesian and hence so is the
  outer square.  We conclude that the object
  $Z \times_{Y} Z$ is isomorphic to the object $Z \times_{Y'} Z$ as claimed.
\end{proof}

An immediate consequence of the previous lemma is that we have
$(f,g) \in \cL$ if and only if $(f,s) \in \cL$. Similarly, 
we have $\Delta f \ppz \Delta g \in \cL$ if and only if
$\Delta f \ppz \Delta s \in \cL$.  Hence the Blakers-Massey theorem for the square $(a)$
of \eqref{eq:sm-diag} implies the Blakers-Massey theorem for the square
$(a)+(b)$.  It therefore suffices to prove Theorem \ref{thm:gen-bm} in
the case where $g$ (or, by symmetry, $f$) is a cover.

The proof of Theorem~\ref{thm:gen-bm} will hinge on a careful
analysis of the following cubical diagram
\begin{equation}
  \label{bm-cube}
  \begin{tikzcd}[bo column sep=large, row sep=large]
    (Z \times_X Z) \sqcup_Z (Z \times_Y Z) \ar[rr, "\rho_X"] \ar[dd, "\cogap{p_2}{p_2}"'] \ar[dr, "\rho_Y"']
    & & Z \times_X Z \ar[dd, near start, "gp_2"'] \ar[dr, "p_1"] & \\
    & Z \times_Y Z \ar[rr, crossing over, near end, "p_1"'] & & Z \ar[dd, "d"] \\
    Z \ar[rr, near end, "g"] \ar[dr, "f"'] & & Y \ar[dr, "k"]  & \\
    & X \ar[rr, crossing over, "h"'] \ar[from=uu, crossing over, near start, "fp_2"'] & & W
  \end{tikzcd}
\end{equation}
in which we set $d:=hf = kg$,
\begin{align*}
 \rho_X &:= \cogap{1_{Z\times_X Z}}{\Delta f \circ p_1}  \\
  &=\cogap{(p_1, p_2)}{(p_1,p_1)} \\
  & = \begin{bmatrix}
    p_1 & p_2       \\
    p_1 & p_1     
   \end{bmatrix} \\
 & =(\cogap{p_1}{p_1}, \cogap{p_2}{p_1})
\end{align*}
and 
\begin{align*}
\rho_Y &:= \cogap{\Delta g \circ p_1}{1_{Z \times_Y Z}}\\
   &=\cogap{(p_1, p_1)}{(p_1,p_2)} \\
  & = \begin{bmatrix}
    p_1 & p_1       \\
    p_1 & p_2     
   \end{bmatrix} \\
 & =(\cogap{p_1}{p_1}, \cogap{p_1}{p_2}).
\end{align*}
Let us pause to verify that the cube is indeed
commutative.  
First of all, the bottom face commutes by hypothesis.
The top face commutes since
\begin{eqnarray*}
  p_1 \circ \rho_X
                   &=& p_1 \circ (\cogap{p_1}{p_1}, \cogap{p_2}{p_1}) \\
                   &=& \cogap{p_1}{p_1} \\
                   &=& p_1 \circ (\cogap{p_1}{p_1}, \cogap{p_1}{p_2}) \\
                     &=& p_1 \circ \rho_Y .
\end{eqnarray*}
Next, the map $f: Z \to X$ coequalizes the
maps $p_1,p_2:Z\times_XZ\to Z$
since the square (\ref{apullback}) commutes.
Hence the front face commutes since
$dp_1=hfp_1=hfp_2$.  The right face commutes by a similar argument.
Finally, the left face commutes since
\[ fp_2\rho_Y = fp_2(\cogap{p_1}{p_1}, \cogap{p_1}{p_2}) = f \cogap{p_1}{p_2}= \cogap{fp_1}{fp_2} = \cogap{fp_2}{fp_2}= f\cogap{p_2}{p_2} \]
and similarly for the back face.

\begin{lem}\label{propqXqY} 
The top face of the cube~\emph{\eqref{bm-cube}}
  \begin{equation*}
    \begin{tikzcd}
      (Z \times_X Z) \sqcup_Z (Z \times_Y Z) \ar[d, "\rho_Y"'] \ar[r, "\rho_X"] & Z \times_X Z \ar[d, "p_1"] \\
      Z \times_Y Z \ar[r, "p_1"'] & Z \pomark
    \end{tikzcd}
  \end{equation*}
  is cocartesian.
\end{lem}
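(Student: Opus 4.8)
The plan is to deduce this purely formally, by two applications of the pasting law for pushouts; no topos-theoretic input (descent, universality of colimits) is needed, and in fact the statement holds in any category with the relevant pushouts. Recall that $(Z\times_X Z)\sqcup_Z(Z\times_Y Z)$ is by definition the pushout of $Z\times_X Z \xleftarrow{\Delta f} Z \xrightarrow{\Delta g} Z\times_Y Z$; write $i:Z\times_X Z\to (Z\times_X Z)\sqcup_Z(Z\times_Y Z)$ and $j:Z\times_Y Z\to (Z\times_X Z)\sqcup_Z(Z\times_Y Z)$ for the two structure maps. The formulas defining $\rho_X$ and $\rho_Y$ say exactly that $\rho_X i=\id$, $\rho_X j=\Delta f\circ p_1$, $\rho_Y i=\Delta g\circ p_1$ and $\rho_Y j=\id$; moreover $p_1\circ\Delta f=\id_Z=p_1\circ\Delta g$, since $\Delta f$ and $\Delta g$ are sections of the respective projections $p_1$.

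First I would apply the pasting law to the diagram
\[
\begin{tikzcd}
Z \ar[r, "\Delta g"] \ar[d, "\Delta f"'] & Z\times_Y Z \ar[r, "p_1"] \ar[d, "j"] & Z \ar[d, "\Delta f"] \\
Z\times_X Z \ar[r, "i"'] & (Z\times_X Z)\sqcup_Z(Z\times_Y Z) \ar[r, "\rho_X"'] & Z\times_X Z
\end{tikzcd}
\]
whose left square is a pushout by the definition of the coproduct over $Z$, and whose outer rectangle is a pushout because its top edge $p_1\circ\Delta g$ and its bottom edge $\rho_X\circ i$ are both identities. Hence the right square is a pushout. Stacking this square on top of the square in question yields
\[
\begin{tikzcd}
Z\times_Y Z \ar[r, "p_1"] \ar[d, "j"'] & Z \ar[d, "\Delta f"] \\
(Z\times_X Z)\sqcup_Z(Z\times_Y Z) \ar[r, "\rho_X"] \ar[d, "\rho_Y"'] & Z\times_X Z \ar[d, "p_1"] \\
Z\times_Y Z \ar[r, "p_1"'] & Z
\end{tikzcd}
\]
The top square is a pushout by the previous step, and the outer rectangle is a pushout because its left edge $\rho_Y\circ j$ and its right edge $p_1\circ\Delta f$ are both identities; so by the pasting law the bottom square, which is precisely the top face of \eqref{bm-cube}, is a pushout.

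The computation is entirely formal, so there is no genuine obstacle; the only thing requiring attention is the bookkeeping — verifying that each small square in the two displayed diagrams commutes and that the composite edges of the two outer rectangles collapse to identities, which comes down to tracking the four ``retraction'' identities for $\rho_X$ and $\rho_Y$ together with $p_1\circ\Delta f=p_1\circ\Delta g=\id_Z$. All of these were in essence already recorded when the cube \eqref{bm-cube} was assembled and its commutativity checked (in particular the equality $p_1\circ\rho_X=p_1\circ\rho_Y$ used for the bottom square is exactly the commutativity of the top face verified there).
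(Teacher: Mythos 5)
Your proof is correct and is essentially the paper's own argument: the paper performs the same two applications of the pasting law, packaged as a general statement about pointed objects $(A,a)$ and $(B,b)$ whose squares $(a)$, $(b)$, $(c)$ correspond exactly to your two rectangles, and then specializes to the pointed objects $(Z\times_X Z,\Delta f)$ and $(Z\times_Y Z,\Delta g)$ of the slice topos $\cE_{/Z}$. You carry out the identical computation directly on the concrete square, which is equally valid.
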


\begin{proof}
Let us first show that for any pair of pointed objects $(A,a)$
  and $(B,b)$ in a topos, the following square
  \[
    \begin{tikzcd}[column sep=large, row sep=large]
      A \vee B \ar[d, "\cogap{1_A}{0_B}"'] \ar[r, "\cogap{0_A}{1_B}"] & B \ar[d] \\
      A \ar[r] & \term \pomark
    \end{tikzcd}
  \]
  is cocartesian, where $0_A : A \to \term \xrightarrow{b} B$ and
  $0_B : B \to \term \xrightarrow{a} A$. 
  For this, consider the following commutative diagram
  \[
    \begin{tikzcd}[column sep=large, row sep=large]
  \term \ar[r, "a "] \ar[d, "b "']\ar[dr, phantom, "\scriptstyle{(a)}"] & A \ar[d, "A \vee b"] & \\
   B \ar[r, "a \vee B"] \ar[d] \ar[dr, phantom, "\scriptstyle{(b)}"] & \pomark  A \vee B \ar[d] 
   \ar[r, "\cogap{0_A}{1_B}"] \ar[dr, phantom, "\scriptstyle{(c)}"] & B \ar[d] \\
 \term \ar[r, "a "']  &      A \ar[r] & \term.
    \end{tikzcd}
  \]
  The square $(a)$ in this diagram is cocartesian by construction. The squares $(a)+(b)$
  and $(b)+(c)$ are trivialy cocartesian.
  Hence the square $(b)$ is cocartesian, since the squares $(a)$ and $(a)+(b)$ are cocartesian.
  It follows that the square $(c)$ is cocartesian, since $(b)$ and $(b+c)$ are cocartesian.
  Regarding $Z \times_X Z$ and $Z \times_Y Z$ as pointed objects of the topos
  $\cE_{/Z}$ with structure maps given by $p_1$ and base points given
  by the diagonal maps $\Delta f$ and $\Delta g$ respectively, we see
  that the top face of the cube~\emph{\eqref{bm-cube}} has the form above, since
  $\rho_X = \cogap{1_{Z\times_X Z}}{\Delta f \circ p_1}$ and
  $\rho_Y = \cogap{\Delta g \circ p_1}{1_{Z \times_Y Z}}$ by definition.
\end{proof} 

We are now nearly in a position to apply Proposition \ref{prop:l-descent}
on descent of $\cL$-cartesian squares to the cube~\eqref{bm-cube}.
For this we need to show that the 
back and left faces of the cube~\eqref{bm-cube} are $\cL$-cartesian.

\begin{lem} \label{acruciallemma} 
  Suppose $\Delta f \ppz \Delta g \in \cL$. Then the squares
  $(a)$ and $(b)$
  \[
  \begin{tikzcd}[row sep=large]
    Z \times_Y Z \ar[d, "fp_2"'] \ar[dr, phantom, "\scriptstyle{(a)}"] 
    & (Z \times_X Z) \sqcup_Z (Z \times_Y Z) \ar[l, "\rho_Y"'] \ar[r, "\rho_X"] \ar[d, "\cogap{p_2}{p_2}"'] 
    & Z \times_X Z \ar[d, "gp_2"] \ar[dl, phantom, "\scriptstyle{(b)}"] \\
    X & Z \ar[l, "f"] \ar[r, "g"'] & Y
  \end{tikzcd}
  \]
are $\cL$-cartesian.
\end{lem}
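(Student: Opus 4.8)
The plan is to show that the cartesian gap maps of the squares $(b)$ and $(a)$ are isomorphic, as objects of $\cE^\to$, to $\delta_{YX}$ and $\delta_{XY}$ respectively. Since Lemma~\ref{squareford} identifies both $\delta_{YX}$ and $\delta_{XY}$ with $\Delta f\ppz\Delta g$, which lies in $\cL$ by hypothesis, and since $\cL$ is closed under isomorphism in the arrow category (it contains all isomorphisms and is closed under composition by Lemma~\ref{lemma-stabilitybylimits}), this will suffice. Moreover, relabelling $X\leftrightarrow Y$ and $f\leftrightarrow g$ carries square $(b)$ to square $(a)$ — up to the canonical symmetry of the coproduct $(Z\times_X Z)\sqcup_Z(Z\times_Y Z)$, which interchanges a cogap map $\cogap{a}{b}$ with $\cogap{b}{a}$ — and being $\cL$-cartesian is invariant under isomorphism of squares, so it is enough to treat square $(b)$.

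For square $(b)$, the codomain of the gap map is the pullback of $g:Z\to Y$ along $gp_2:Z\times_X Z\to Y$. Writing $gp_2=g\circ p_2$ and pasting pullback squares, I would identify this pullback with the iterated fibre product $Z\times_Y Z\times_X Z$ appearing as the target of $\delta_{YX}$, that is, the limit of $Z\xrightarrow{g}Y\xleftarrow{g}Z\xrightarrow{f}X\xleftarrow{f}Z$; note that this identification involves a transposition of two of the three factors, so it must be set up with some care. Under it, the gap map $(\cogap{p_2}{p_2},\rho_X)$ becomes a cogap map out of $(Z\times_X Z)\sqcup_Z(Z\times_Y Z)$, and feeding in the explicit formula $\rho_X=\cogap{(p_1,p_2)}{(p_1,p_1)}$ together with the description of $\cogap{p_2}{p_2}$, one computes its restriction to the summand $Z\times_X Z$ to be $(p_2,p_2,p_1)$ and its restriction to $Z\times_Y Z$ to be $(p_2,p_1,p_1)$ — that is, exactly $\delta_{YX}=\cogap{(p_2,p_2,p_1)}{(p_2,p_1,p_1)}$.

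The whole argument is bookkeeping around iterated pullbacks and coproducts, and the one place I expect to need care — the main obstacle, such as it is — is precisely the matching in the previous paragraph: keeping straight which of the three projections out of each iterated fibre product corresponds to which of the composable pullback squares, so that the gap map lands on $\delta_{YX}$ on the nose rather than on a composite of it with a permutation of the three factors (or of the two summands of its source). Once this is checked, Lemma~\ref{squareford} closes the argument for $(b)$, and $(a)$ follows by the symmetry described above.
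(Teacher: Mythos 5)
Your proposal is correct and follows essentially the same route as the paper: both arguments identify the cartesian gap maps of squares $(a)$ and $(b)$ with $\delta_{XY}$ and $\delta_{YX}$ respectively and then invoke Lemma~\ref{squareford} to conclude they lie in $\cL$. The paper organizes the bookkeeping by factoring each square as a composite with two cartesian squares (one of which is the transposition $\sigma$ you flag as the delicate point), and treats $(a)$ and $(b)$ by parallel computations rather than by your symmetry reduction, but these are only presentational differences; your stated restrictions $(p_2,p_2,p_1)$ and $(p_2,p_1,p_1)$ are the correct ones.
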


\begin{proof}
  The square $(a)$
  admits the following decomposition:
  \[
    \begin{tikzcd}
      (Z \times_X Z) \sqcup_Z (Z \times_Y Z) \ar[d, "\cogap{p_2}{p_2}"'] \ar[r, "\delta_{XY}"]
      \ar[dr, phantom, "\scriptstyle{(c)}"] &
      Z \times_X Z \times_Y Z \ar[r, "{(p_2, p_3)}"] \ar[d, "p_1"'] \ar[dr, phantom, "\scriptstyle{(d)}"] &
      Z \times_Y Z \ar[d, "fp_1"] \ar[r, "\sigma"] \ar[dr, phantom, "\scriptstyle{(e)}"] &
      Z \times_Y Z \ar[d, "fp_2"] \\
      Z \ar[r, equal] & Z \ar[r, "f"'] & X \ar[r, equal] & X 
    \end{tikzcd}
  \] 
  since
  \begin{eqnarray*}
    \sigma \circ (p_2,p_3) \circ \delta_{XY} &=& (p_3, p_2) \circ \delta_{XY} \\
                                             &=& (p_3, p_2) \circ (\cogap{p_2}{p_2}, \cogap{p_1}{p_2}, \cogap{p_1}{p_1}) \\
                                             &=& (\cogap{p_1}{p_1}, \cogap{p_1}{p_2}) \\
                                              &=& \rho_Y .
  \end{eqnarray*}
  On the other hand, the square $(d)$ in the diagram above is cartesian by construction and hence
  so is the square $(d)+(e)$ since $\sigma$ is an isomorphism.  Consequently,
  the map $\delta_{XY}$ is isomorphic to the gap map of the square $(c)+(d)+(e)=(a)$.  
  By Lemma~\ref{squareford}, $\delta_{XY}$ is isomorphic
  to the map $\Delta f \ppz \Delta g$ which is in $\cL$ by hypothesis.  Hence
  the square $(a)$ is $\cL$-cartesian.
  
  The square $(b)$ admits a similar decomposition which takes the form
  \[
    \begin{tikzcd}
      (Z \times_X Z) \sqcup_Z (Z \times_Y Z) \ar[d, "\cogap{p_2}{p_2}"'] \ar[r, "\delta_{YX}"] & Z \times_Y Z \times_X Z \ar[r, "{(p_2, p_3)}"] \ar[d, "p_1"']
      & Z \times_X Z \ar[d, "gp_1"] \ar[r, "\sigma"] & Z \times_X Z \ar[d, "gp_2"] \\
      Z \ar[r, equal] & Z \ar[r, "g"'] & Y \ar[r, equal] & Y 
    \end{tikzcd}
  \]
  A similar application of Lemma~\ref{squareford} to the map
  $\delta_{YX}$ completes the proof.
\end{proof} 

\begin{proof}[Proof of Theorem \emph{\ref{thm:gen-bm}}]
The bottom face of the cube cube~\eqref{bm-cube} is cocartesian by the hypothesis
and the top face is cocartesian by Lemma \ref{propqXqY}.
The previous lemma shows that the back and left faces of the cube are $\cL$-cartesian,
since $\Delta f \ppz \Delta g \in \cL$ by the hypothesis.
It then follows from Proposition~\ref{prop:l-descent} that the front and right faces are also
  $\cL$-cartesian.
  Now, the right face of the cube~\eqref{bm-cube} is the composite of
  the following two squares since $d=hf$.
  \[
    \begin{tikzcd}
      Z \times_X Z \ar[d, "p_1"'] \ar[r, "p_2"] & Z \ar[r, "g"] \ar[d, "f"'] & Y \ar[d, "k"] \\
      Z \ar[r, two heads, "f"'] & X \ar[r, "h"'] & W
    \end{tikzcd}
  \]
  Recall from Lemma \ref{lem:surj-reduction} that we may assume that the map $f: Z \to X$ is a cover. 
  The left hand square of the above diagram is cartesian by
  construction and the composite square is $\cL$-cartesian by the previous
  considerations. It follows from Lemma~\ref{lem:l-pullbacks} that the
  right hand square is $\cL$-cartesian, since $f:Z\to X$ is a cover.
  \end{proof} 

\subsection{Applications}
\label{sec:applications}

Our main application is a Blakers-Massey theorem in the context of Goodwillie's calculus of homotopy functors. This is developed in our second joint article~\cite{GBM2}. It was indeed this application to Goodwillie calculus that motivated the whole project outlined in the two papers.

Let us now show how easily the classical Blakers-Massey theorem follows from our main theorem.
The classical theorem in the category of spaces $\cS$
is a special case of the following Blakers-Massey theorem in an arbitrary topos. Recall from Remark~\ref{rem:conn-conv} that our topos theoretic definition of $n$-connected map differs from the classical convention in homotopy theory by a shift of one.

\begin{cor}[Classical Blakers-Massey for topoi]
Given a pushout square 
\[
\begin{tikzcd}
  A \ar[r, "g"] \ar[d, "f"'] & C \ar[d] \\
  B \ar[r] & D \pomark
\end{tikzcd}
\]
in a topos, such that $f$ is $m$-connected and $g$ is $n$-connected, then the cartesian gap
map $\gap{f}{g}:A \to B \times_D C$ is $(m+n)$-connected.
\end{cor}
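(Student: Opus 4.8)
The plan is to obtain this as a direct application of the Generalized Blakers--Massey Theorem~\ref{thm:gen-bm}, taking for $(\cL,\cR)$ the modality $(C_{m+n}(\cE),T_{m+n}(\cE))$ of $(m+n)$-connected and $(m+n)$-truncated maps. This is a factorization system by Proposition~\ref{n-trunc-connectedfactsystem}, and its left class is stable under base change (as $n$-connected maps always are), so it is indeed a modality. With this choice, the assertion of Theorem~\ref{thm:gen-bm} that the given pushout square is $\cL$-cartesian is exactly the assertion that its cartesian gap map $\gap{f}{g}:A\to B\times_D C$ lies in $C_{m+n}(\cE)$, i.e. is $(m+n)$-connected. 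Thus everything comes down to checking the single hypothesis of Theorem~\ref{thm:gen-bm}: that $\Delta f \ppz \Delta g \in C_{m+n}(\cE)$, where here $Z$ is taken to be the apex $A$ of the pushout.

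To verify this I would pass to the absolute pushout product. By Example~\ref{example-po}\eqref{pushout-product-and-base-change}, the map $\Delta f \ppz \Delta g$ is a base change of $\Delta f \pp \Delta g$; since $C_{m+n}(\cE)$ is the left class of a modality it is closed under base change, so it is enough to prove that $\Delta f \pp \Delta g$ is $(m+n)$-connected in $\cE$. Assume first $m,n\geq -1$. Then, by Proposition~\ref{prop:delta-conn}, an $m$-connected map has an $(m-1)$-connected diagonal, so $\Delta f$ is $(m-1)$-connected and $\Delta g$ is $(n-1)$-connected. Applying Corollary~\ref{cor:raise-lower}(4) --- which says that the pushout product of an $a$-connected map with a $b$-connected map is $(a+b+2)$-connected --- we conclude that $\Delta f \pp \Delta g$ is $\bigl((m-1)+(n-1)+2\bigr)=(m+n)$-connected, as needed. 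The remaining cases $m=-2$ or $n=-2$ reduce to the same computation, using that every map is $(-2)$-connected and that $C_k(\cE)\subseteq C_{k'}(\cE)$ whenever $k'\leq k$.

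Feeding this back into Theorem~\ref{thm:gen-bm} completes the proof. I do not expect any genuine obstacle here: the argument is purely a matter of tracking connectivity indices, with all the real content already contained in Theorem~\ref{thm:gen-bm}, Proposition~\ref{prop:delta-conn} and Corollary~\ref{cor:raise-lower}. The one point deserving a word of caution is the one-step shift between our topos-theoretic notion of $n$-connected map and the classical one (Remark~\ref{rem:conn-conv}), which must be invoked when one finally specializes $\cE=\cS$ to recover the homotopy excision statement quoted in the introduction.
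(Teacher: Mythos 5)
Your proposal is correct and follows essentially the same route as the paper: apply Theorem~\ref{thm:gen-bm} to the modality $(C_{m+n}(\cE),T_{m+n}(\cE))$, using Proposition~\ref{prop:delta-conn} to see that $\Delta f$ and $\Delta g$ are $(m-1)$- and $(n-1)$-connected and Corollary~\ref{cor:raise-lower}(4) to conclude that $\Delta f \pp \Delta g$ is $(m+n)$-connected. Your extra care about the reduction from $\pp$ to $\ppz$ and the degenerate cases $m=-2$ or $n=-2$ is sound but not a different argument.
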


\begin{proof}
Example~\ref{exam:diagonalsaspbhoms}(1) identifies the diagonal map $\Delta f$ with $\ph{s_0}{f}$ which is $(m-1)$-connected by Proposition~\ref{prop:delta-conn}.  Similarly, $\Delta g$ is $(n-1)$-connected. Since the $(m+n)$-connected maps form a modality, and $\Delta f \pp \Delta g$ is $(m+n)$-connected by Corollary \ref{cor:raise-lower} (4), the result now follows from Theorem \ref{thm:gen-bm}.
\end{proof}

As further application we explain how our generalized version yields the improvement of the Blakers-Massey theorem by Chach{\'o}lski, Scherer and Werndli in \cite{ChachSchererWerndli}. We will need some preparation.

In the category $\cS$, the modality generated by a map $u:A\to B$ coincides with the modality generated by the maps $u^{-1}(b)\to \term$ for all $b\in B$.
This follows from Remark \ref{localclassesinSpaces}.

If $x$ and $y$ are two points of a space $X$, let us denote by $X(x,y)$ the space of paths $x\to y$.
By construction, $X(x,y)$ is the fiber of the diagonal map $X\to X\times X$ at $(x,y)\in X\times X$.
Notice that $X(x,y)= X(x,x)= \Omega_x X$ when $x$ and $y$ are homotopic and that $X(x,y)=\emptyset$ otherwise.
If $(A,a)$ is a pointed space, then $A(x,a)$ is the fiber at $x\in A$
of the map $a:\term \to A$.

\begin{lem}\label{fiberofcanonical} Let $(A,a)$ and $(B,b)$ be pointed spaces.
Then the fiber of the canonical map $A\vee B\to A\times B$ at $(x,y)\in A\times B$ is the join $A(x,a)\join B(y,b)$.
\end{lem}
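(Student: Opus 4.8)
The plan is to compute the fiber of the canonical map $A \vee B \to A \times B$ at a point $(x,y)$ by recognizing $A \vee B$ as a pushout and then using universality of colimits in $\cS$ to pull the pushout diagram back along $(x,y) : \term \to A \times B$. Recall that $A \vee B$ is the pushout of $A \leftarrow \term \to B$, where the two maps are the basepoints $a$ and $b$; equivalently, writing $p_A : A \times B \to A$ and $p_B : A \times B \to B$ for the projections, the map $A \vee B \to A \times B$ is the cogap map of the square whose corners are $\term$, $A$, $B$, $A \times B$, with the maps $A \to A \times B$ and $B \to A \times B$ being $(\id_A, b)$ and $(a, \id_B)$. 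So the fiber over $(x,y)$ is the pullback along $(x,y)$ of this cogap map.

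First I would form the cube obtained by pulling back the pushout square presenting $A \vee B$ along the map $(x,y) : \term \to A \times B$. Since colimits in $\cS$ are universal, the top face of this cube (the fiber of $A \vee B$ over $(x,y)$) is again a pushout, namely the pushout of the diagram
\[
  A(x,a) \leftarrow \mathrm{fib}_{(x,y)}(\term \to A \times B) \to B(y,b),
\]
where $A(x,a)$ is the fiber over $x$ of $A \to A\times B$, $(v)\mapsto (v,b)$, which is computed by a two-step pullback to be the fiber of $a : \term \to A$ over $x$, i.e.\ precisely the path space $A(x,a)$; symmetrically for $B(y,b)$; and the fiber of the composite $\term \to A \times B \leftarrow \term$ over $(x,y)$ is $A(x,a) \times B(y,b)$. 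Here I am using the identification, recalled just before the lemma, of the fiber of a diagonal at $(x,y)$ with the path space, and the fiber of a basepoint map with a path space.

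At this point I would observe that the resulting pushout,
\[
  A(x,a) \sqcup_{A(x,a) \times B(y,b)} B(y,b),
\]
is by definition the join $A(x,a) \join B(y,b)$ (Example~\ref{example-po}(2), applied in $\cS$, or equivalently the instance of Example~\ref{example-po}(\ref{fib-join}) computing the fiber of a pushout product as an external join of fibers), which finishes the proof. The main obstacle — really the only thing requiring care — is bookkeeping the several nested pullback squares to verify that the three corners of the pulled-back pushout diagram are the path spaces and product claimed, and that the two maps $A(x,a)\times B(y,b) \to A(x,a)$ and $A(x,a)\times B(y,b)\to B(y,b)$ are the two projections; this is a routine diagram chase using the pasting lemma for pullbacks and universality of colimits, with no essential difficulty.
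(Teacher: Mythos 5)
Your proposal is correct and follows essentially the same route as the paper: the paper simply cites Example~\ref{example-po}(1) to identify $A\vee B\to A\times B$ with the pushout product $a\pp b$ and Example~\ref{example-po}(\ref{fib-join}) for the fact that fibers of a pushout product are joins of fibers, whereas you unpack that second citation into its proof via universality of colimits. One small bookkeeping slip, precisely in the step you flag as the only delicate one: the fiber of $(\id_A,b):A\to A\times B$ over $(x,y)$ is $b^{-1}(y)=B(y,b)$ (not $A(x,a)$), and symmetrically for the other corner; the two path spaces are transposed in your pulled-back span. This is harmless here, since the resulting pushout $A(x,a)\leftarrow A(x,a)\times B(y,b)\to B(y,b)$ along the projections is the join either way.
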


\begin{proof} One has $A\vee B\to A\times B=(a:\term \to A)\pp (b:\term \to B)$.
The fiber $(a\pp b)^{-1}(x,y)$ is the join of the fibers $a^{-1}(x)$ and $b^{-1}(y)$ by Example \ref{example-po}(4).
The result follows, since $a^{-1}(x)=A(x,a)$ and $b^{-1}(y)=B(y,b)$.
\end{proof}

Finally, we now rederive the weak cellular inequality of Chach{\'o}lski, Scherer and Werndli \cite{ChachSchererWerndli} in the case of a pushout.  For a pointed space $(A,a)$ we denote the \emph{set} of spaces $A(x,a)$ for $x\in A$ by $\cP(A,a)$.

\begin{thm} \label{BMlemmaforspaces} In the category of spaces, the cartesian gap map of a pushout
 \[
    \begin{tikzcd}
      Z \ar[r, "g"] \ar[d, "f"'] & X \ar[d] \\
      Y \ar[r] & W \pomark
    \end{tikzcd}
  \]
belongs to the modality generated by the set of maps $S\join T\to \term$ for $S\in \cP(f^{-1}f(z),z)$, $T\in \cP(g^{-1}g(z),z)$ and $z\in Z$.
\end{thm}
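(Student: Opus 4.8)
The plan is to deduce the statement from the Generalized Blakers--Massey Theorem~\ref{thm:gen-bm}, applied to the modality generated by the given set of maps. Write $\Sigma$ for the set of maps $S\join T\to\term$ with $S\in\cP(f^{-1}(f(z)),z)$, $T\in\cP(g^{-1}(g(z)),z)$ and $z\in Z$, and let $(\cL,\cR)$ be the modality generated by $\Sigma$; thus $\Sigma\subseteq\cL$. Since the hypothesis and conclusion of Theorem~\ref{thm:gen-bm} are unchanged under swapping the two legs of the square (recall $\Delta f\ppz\Delta g\cong\Delta g\ppz\Delta f$ by Lemma~\ref{squareford}), applying that theorem to the given pushout with respect to $(\cL,\cR)$ reduces everything to a single point: that the map $\Delta f\ppz\Delta g$ of $\cE_{/Z}$ lies in $\cL$, where, as usual, a map of a slice is declared to lie in $\cL$ when its underlying map of spaces does. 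Granting this, Theorem~\ref{thm:gen-bm} says that the square is $\cL$-cartesian, i.e.\ that the cartesian gap map $Z\to X\times_W Y$ belongs to $\cL$ --- precisely the assertion of the theorem.

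For this one remaining point I would argue fiberwise. By Proposition~\ref{lemma-localclass} the class $\cL$ is local, so by Corollary~\ref{lemma-inSpaces} it suffices to show that each fiber over $\term$ of the map of spaces underlying $\Delta f\ppz\Delta g$ lies in $\cL$. The computation needed is the one already set up in the text just before Theorem~\ref{thm:gen-bm}: over a point $z\in Z$, the map $\Delta f\ppz\Delta g$ restricts to the canonical inclusion
\[
  f^{-1}(f(z))\vee_z g^{-1}(g(z))\ \longrightarrow\ f^{-1}(f(z))\times g^{-1}(g(z))
\]
of a wedge into a product, both summands pointed at $z$. By Lemma~\ref{fiberofcanonical}, the fiber of this inclusion at a point $(x,y)$ is the join $f^{-1}(f(z))(x,z)\join g^{-1}(g(z))(y,z)$; and directly from the definition of $\cP$ one has $f^{-1}(f(z))(x,z)\in\cP(f^{-1}(f(z)),z)$ and $g^{-1}(g(z))(y,z)\in\cP(g^{-1}(g(z)),z)$. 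Hence every fiber of $\Delta f\ppz\Delta g$ is isomorphic to some $S\join T$ with $(S\join T\to\term)\in\Sigma$, and so lies in $\cL$ (as $\Sigma\subseteq\cL$ and $\cL$ is closed under isomorphism). Corollary~\ref{lemma-inSpaces} now gives $\Delta f\ppz\Delta g\in\cL$, which completes the proof.

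I expect the only genuinely delicate step to be the fiberwise identification of $\Delta f\ppz\Delta g$, together with the bookkeeping of basepoints and of which path space is taken from which copy of $\cP$; but this has essentially been carried out already in the discussion preceding Theorem~\ref{thm:gen-bm} and in Lemma~\ref{fiberofcanonical}, so the work reduces to assembling those inputs with the locality of $\cL$ and with the generalized theorem. I would also note that no separate discussion of degenerate cases is needed: a disconnected fiber of $f$ or of $g$ merely produces empty path spaces, which nonetheless belong to the relevant copies of $\cP$; and when $Z=\ini$ the set $\Sigma$ is empty, the generated modality has $\cL$ the class of isomorphisms, $\Delta f\ppz\Delta g$ is itself an isomorphism, and $X\times_W Y=\ini$, so the conclusion holds trivially.
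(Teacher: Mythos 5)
Your proof is correct, and its computational core --- the fiberwise description of $\Delta f\ppz\Delta g$ over $Z$ set up before Theorem~\ref{thm:gen-bm}, combined with Lemma~\ref{fiberofcanonical} --- is exactly the one the paper uses. The logical packaging is genuinely different, though. The paper first notes that in $\cS$ the modality generated by a map coincides with the modality generated by its fibers, deduces that the modality generated by $\Delta f\ppz\Delta g$ is the one generated by the set $\Sigma$ of maps $S\join T\to\term$, and then concludes via the Corollary following Theorem~\ref{thm:gen-bm} (the gap map lies in the modality generated by $\Delta f\ppz\Delta g$), whose justification rests on the announced work-in-progress fact that an arbitrary set of maps generates a modality. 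You instead fix the modality $(\cL,\cR)$ generated by $\Sigma$ from the outset, verify the hypothesis $\Delta f\ppz\Delta g\in\cL$ of Theorem~\ref{thm:gen-bm} directly by locality (Proposition~\ref{lemma-localclass} and Corollary~\ref{lemma-inSpaces}) applied to the fibers computed in Lemma~\ref{fiberofcanonical}, and conclude. What this buys you is a proof that only invokes modalities generated by sets of maps of the form $A\to\term$, i.e.\ nullifications, whose existence the paper establishes independently; the unproved Corollary is sidestepped entirely. What it gives up is the slightly stronger assertion implicit in the paper's argument, namely that the modality generated by $\Delta f\ppz\Delta g$ is actually \emph{generated by} $\Sigma$ rather than merely containing $\Delta f\ppz\Delta g$ in its left class --- but only the containment is needed for the theorem as stated. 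Two minor remarks: the appeal to symmetry of the square is unnecessary, since in both Theorem~\ref{thm:gen-bm} and the present statement the downward leg is $f$ and the rightward leg is $g$ (only the labels of the corners differ, and the two orderings of the pullback $X\times_W Y$ agree); and your closing observations about empty path components and the case $Z=\ini$ are correct but not required, as the definition of $\cP$ already accommodates empty path spaces.
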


\begin{proof} We saw above that $\Delta f \ppz \Delta g$, viewed fiberwise over $Z$, can be regarded as a sum
over $z\in Z$ of the canonical map
\[i_z: f^{-1}(f(z))\vee_z g^{-1}(g(z))\to f^{-1}(f(z))\times g^{-1}(g(z)) \]
Hence the modality generated by $\Delta f \ppz \Delta g$
is also generated by the maps $i_z$ for all $z\in Z$.
By Lemma \ref{fiberofcanonical}, the fibers of $i_z$
are of the form $S\join T$, where $S$ is a fiber of the
map $z:\term \to f^{-1}(f(z))$ and $T$ is a fiber of the
map $z:\term \to g^{-1}(g(z))$.
Hence the modality generated by the map $\Delta f \ppz \Delta g$
is generated by the maps $S\join T\to \term$ for $S\in \cP(f^{-1}f(z),z)$,
$T\in \cP(g^{-1}g(z),z)$ and $z\in Z$.
\end{proof}


\begin{thebibliography}{{Goo}92}

\bibitem[ABFJ18]{GBM2}
Mathieu Anel, Georg Biedermann, Eric Finster, and Andr\'{e} Joyal.
\newblock {Goodwillie's calculus of functors and higher topos theory}.
\newblock {\em Journal of Topology}, 11(4):1100--1132, 2018.

\bibitem[AS20]{Anel-Subramaniam:SOA}
Mathieu Anel and Chaitanya~Leena Subramaniam.
\newblock Small object arguments, plus-construction, and left-exact
  localizations.
\newblock \url{https://arxiv.org/abs/2004.00731}, 2020.

\bibitem[Bou77]{bousfield1977constructions}
Aldridge~K Bousfield.
\newblock Constructions of factorization systems in categories.
\newblock {\em Journal of Pure and Applied Algebra}, 9(2-3):207--220, 1977.

\bibitem[CPS06]{CPS2005}
Wojciech {Chach{\'o}lski}, Wolfgang {Pitsch}, and Jer\^{o}me {Scherer}.
\newblock {Homotopy pull-back squares up to localization}.
\newblock In {\em {An Alpine Anthology of Homotopy Theory, Proceedings of the
  Arolla Conference on Algebraic Topology 2004}}, volume 399, pages 55--72.
  2006.

\bibitem[CS06]{chataur2006fiberwise}
David Chataur and J{\'e}r{\^o}me Scherer.
\newblock Fiberwise localization and the cube theorem.
\newblock {\em Commentarii Mathematici Helvetici}, 81(1):171--189, 2006.

\bibitem[CSW16]{ChachSchererWerndli}
W.~Chach\'{o}lski, J.~Scherer, and K.~Werndli.
\newblock {Homotopy Excision and Cellularity}.
\newblock {\em Ann. Inst. Fourier}, 66(2):2641--2665, 2016.

\bibitem[DF95]{farjoun1995cellular}
Emmanuel Dror~Farjoun.
\newblock {\em Cellular spaces, null spaces and homotopy localization}.
\newblock Number 1621-1622 in Lecture Notes in Mathematics. Springer Science \&
  Business Media, 1995.

\bibitem[FFLL16]{FinsterLumsdaine}
Kuen-Bang~Hou (Favonia), Eric Finster, Dan Licata, and Peter~LeFanu Lumsdaine.
\newblock {A Mechanization of the Blakers-Massey Connectivity Theorem in
  Homotopy Type Theory}.
\newblock In {\em {LICS '16: Proceedings of the 31st Annual ACM/IEEE Symposium
  on Logic in Computer Science}}, pages 565--574, 07 2016.

\bibitem[{Goo}92]{G92}
Thomas~G. {Goodwillie}.
\newblock {Calculus II: Analytic Functors}.
\newblock {\em K-Theory}, 5, 1992.

\bibitem[Joy08]{JoyalNQC}
Andre Joyal.
\newblock Notes on quasicategories.
\newblock {\url{http://www.math.uchicago.edu/~may/IMA/Joyal.pdf}}, 2008.

\bibitem[Lur09]{LurieHT}
Jacob Lurie.
\newblock {\em Higher topos theory}, volume 170 of {\em Annals of Mathematics
  Studies}.
\newblock Princeton University Press, Princeton, NJ, 2009.

\bibitem[Mat76]{mather1976pull}
Michael Mather.
\newblock Pull-backs in homotopy theory.
\newblock {\em Canad. J. Math}, 28(2):225--263, 1976.

\bibitem[May80]{may1980fibrewise}
Jon~Peter May.
\newblock Fibrewise localization and completion.
\newblock {\em Transactions of the American Mathematical Society},
  258(1):127--146, 1980.

\bibitem[Rez05]{RezkTopos}
Charles Rezk.
\newblock Toposes and homotopy toposes.
\newblock \url{http://www.math.uiuc.edu/~rezk}, 2005.

\bibitem[Rez15]{Rezk:blakers-massey}
Charles Rezk.
\newblock {Proof of the Blakers-Massey Theorem}.
\newblock {unpublished, \url{http://www.math.uiuc.edu/~rezk/}}, 2015.

\bibitem[{Uni}13]{hottbook}
The {Univalent Foundations Program}.
\newblock {\em Homotopy Type Theory: Univalent Foundations of Mathematics}.
\newblock \url{https://homotopytypetheory.org/book}, Institute for Advanced
  Study, 2013.

\end{thebibliography}
\end{document}